\numberwithin{equation}{section}
\numberwithin{figure}{section}
\theoremstyle{plain}
  \theoremstyle{remark}
  \theoremstyle{definition}
  \newtheorem*{example*}{\protect\examplename}
  \theoremstyle{definition}
\theoremstyle{plain}
\newtheorem{Theorem}{Theorem}[section]
\newtheorem{theorem}[Theorem]{Theorem}
\newtheorem{cor}[Theorem]{Corollary}
\newtheorem{prop}[Theorem]{Proposition}
\newtheorem{lemma}[Theorem]{Lemma}
\theoremstyle{definition}
\newtheorem{remark}[Theorem]{Remark}
\theoremstyle{remark}
\newcommand{\Coker}{\qopname \relax o{Coker}}
\newcommand{\GL}{\operatorname{GL}}
\newcommand{\id}{\operatorname{id}}
\renewcommand{\a}{\alpha}
\newcommand{\ad}{{\mathop{\textup{ad}}}}
\newcommand{\aND}{\quad\text{and}\quad}
\newcommand{\AND}{\qquad\text{and}\qquad}
\newcommand{\astwo}{\ast\mkern-1.1mu\ast}
\newcommand{\asthree}{\ast\mkern-1.1mu\ast\mkern-1.1mu\ast}
\newcommand{\Aut}{{\mathop{\textup{Aut}}}}
\renewcommand{\b}{\beta}
\newcommand{\B}{\mathcal B}
\newcommand{\BeE}{\text{B${}_\Ep$EO}}
\newcommand{\BeO}{\text{B${}_\Ep$O}}
\newcommand{\BebO}{{\text B{}_\Ep\mkern 2.0mu\overline{\mkern-2.0mu\text O\mkern-2.0mu}\mkern 2.0mu}}
\newcommand{\bep}{{\bar\epsilon}}
\newcommand{\bepK}{{\bar\epsilon_{\mathcal K_{\mathbf R}}}}
\newcommand{\bepR}{{\bar\epsilon_{\mathbf R}}}
\newcommand{\BGL}{\operatorname{BGL}}
\newcommand{\BSp}{\operatorname{BSp}}
\newcommand{\C}{\mathbf{C}}
\renewcommand{\c}{\circ}
\newcommand{\CCo}{$\C\x\C^\op$}
\newcommand{\Cl}{\colon}
\newcommand{\dd}{\partial}
\newcommand{\ds}{\oplus}
\newcommand{\D}{\Delta}
\newcommand{\E}{\operatorname{E}}
\newcommand{\eF}{{}_\Ep F}
\newcommand{\eH}{{}_\Ep H}
\newcommand{\eE}{{}_\Ep\mkern-3.6mu\operatorname{EO}}
\newcommand{\eO}{{}_\Ep\mkern-2.8mu\operatorname{O}}
\newcommand{\End}{{\mathop{\textup{End}}}}
\newcommand{\ep}{\epsilon}
\newcommand{\ebO}{{{}_\Ep\mkern 2.0mu\overline{\mkern-2.0mu\text O\mkern-2.0mu}\mkern 2.0mu}}
\newcommand{\Ep}{\varepsilon}
\newcommand{\esKQ}{{}_\Ep\mkern-0.6mu\mathbf{\mathop{KQ}}}
\newcommand{\esU}{{}_\Ep\mkern-0.7mu\mathbf U}
\newcommand{\esV}{{}_\Ep\mkern-2.8mu\mathbf V}
\newcommand{\eU}{{}_\Ep U}
\newcommand{\eV}{{}_\Ep\mkern-1muV}
\newcommand{\eW}{{}_\Ep\mkern-0.8muW}
\newcommand{\FE}{{\mathop{FE}}}
\newcommand{\g}{\gamma}
\newcommand{\G}{\Gamma}
\renewcommand{\H}{\mathbf{H}}
\newcommand{\hA}{\mkern4.0mu\hat{\mkern-4.0muA}}
\newcommand{\hF}{\mkern1.8mu\hat{\mkern-1.8muF}}
\newcommand{\HHo}{$\H\x\H^\op$}
\newcommand{\HRA}{\hookrightarrow}
\renewcommand{\i}{\iota}
\renewcommand{\k}{\kappa}
\newcommand{\K}{\mathcal K}
\newcommand{\Ker}{\qopname\relax o{Ker}}
\newcommand{\KQ}{{\mathop{KQ}}}
\newcommand{\eKQ}{{}_{\Ep}\KQ}
\newcommand{\Kc}{\mathcal K_{\mathbf C}}
\newcommand{\Kr}{\mathcal K_{\mathbf R}}
\renewcommand{\l}{\lambda}
\renewcommand{\L}{\mathcal L}
\newcommand{\LA}{\longrightarrow}
\newcommand{\LLA}{\longleftarrow}
\newcommand*{\longhookrightarrow}{\ensuremath{\lhook\joinrel\relbar\joinrel\rightarrow}}
\newcommand{\LHRA}{\longhookrightarrow}
\newcommand{\LMT}{\longmapsto}
\newcommand{\LL}{\left}
\newcommand{\mO}{${}_{{\text-}\text1}\mkern-1.0mu$O}
\newcommand{\mU}{${}_{{\text-}\text1}\mkern-2.2mu$U}
\newcommand{\mV}{${}_{{\text-}\text1}\mkern-4.2mu$V}
\newcommand{\N}{\mathbf{N}}
\newcommand{\OO}{O$\x$O}
\newcommand{\Om}{\Omega}
\newcommand{\OU}{\text{O$/$U}}
\newcommand{\into}{\rightarrowtail}
\newcommand{\onto}{\twoheadrightarrow}
\newcommand{\op}{{\text{\textup{op}}}}
\newcommand{\ph}{\varphi}
\newcommand{\pmO}{${}_\pm\mkern-1mu$O}
\newcommand{\pmU}{${}_\pm\mkern-1mu$U}
\newcommand{\pmV}{${}_\pm\mkern-4mu$V}
\newcommand{\pO}{${}_{\text1}\mkern-1.0mu$O}
\newcommand{\pU}{${}_{\text1}\mkern-2.2mu$U}
\newcommand{\pV}{${}_{\text1}\mkern-4.2mu$V}
\newcommand{\kf}{{k,\ph}}
\newcommand{\pT}{\widehat{\otimes}_{\pi}}
\newcommand{\Q}{\mathbf{Q}}
\newcommand{\R}{\mathbf{R}}
\newcommand{\RR}{\right}
\newcommand{\RRo}{$\R\x\R^\op$}
\newcommand{\s}{\sigma}
\newcommand{\sbeq}{\subseteq}
\newcommand{\sK}{\mathbf{K}}
\newcommand{\sKQ}{\mathbf{\mathop{KQ}}}
\newcommand{\sU}{\mathbf U}
\newcommand{\Sp}{\operatorname{Sp}}
\newcommand{\Spp}{Sp$\x$Sp}
\newcommand{\SpU}{\text{Sp$/$U}}
\newcommand{\T}{\otimes}
\newcommand{\tA}{\mkern4.0mu\tilde{\mkern-4.0muA}}
\newcommand{\tB}{\mkern0.8mu\tilde{\mkern-0.8muB}}
\newcommand{\tC}{\mkern4.0mu\tilde{\mkern-4.0muC}}
\renewcommand{\th}{\theta}
\newcommand{\ti}{\tilde}
\newcommand{\Til}{{\text{\large\textasciitilde}}}
\newcommand{\w}{\wedge}
\newcommand{\x}{\times}
\newcommand{\xT}{{\otimes}_{\max}}
\renewcommand{\t}{{\text{\textup{top}}}}
\newcommand{\UO}{\text{U$/$O}}
\newcommand{\USp}{\text{U$/$Sp}}
\newcommand{\UU}{U$\x$U}
\newcommand{\TZ}{{\otimes}_{\mathbf Z}}
\newcommand{\Z}{\mathbf{Z}}
\newcommand{\Ztwo}{\mathbf{Z}/\mkern-2mu2\mathbf{Z}}
\newcommand{\Zsixt}{\mathbf{Z}/\mkern-2mu16\mathbf{Z}}
\newcommand{\BD}{\begin{diagram}}
\newcommand{\ED}{\end{diagram}}
\newcommand{\BE}[1]{\begin{equation}\label{#1}}
\newcommand{\EE}{\end{equation}}
\newcommand{\BM}{\left(\begin{array}}
\newcommand{\EM}{\end{array}\right)}
  \providecommand{\remarkname}{\inputencoding{latin9}Remark}
  \providecommand{\remarkname}{\inputencoding{latin9}Remark}
  \providecommand{\theoremname}{\inputencoding{latin9}Theorem}
  \providecommand{\examplename}{\inputencoding{latin9}Example}
\providecommand{\theoremname}{\inputencoding{latin9}Theorem}
  \providecommand{\examplename}{\inputencoding{latin9}Example}
\providecommand{\theoremname}{\inputencoding{latin9}Theorem}
  \providecommand{\remarkname}{\inputencoding{latin9}Remark}
\providecommand{\theoremname}{\inputencoding{latin9}Theorem}
  \providecommand{\examplename}{\inputencoding{latin9}Example}
  \providecommand{\remarkname}{\inputencoding{latin9}Remark}
\providecommand{\theoremname}{\inputencoding{latin9}Theorem}
\newcommand{\FineBar}[3]{\mkern#1\overline{\mkern-#1{#3}\mkern-#2}\mkern#2}
\newcommand{\bA}{\FineBar{3.5mu}{0.5mu}A}
\newcommand{\bF}{\FineBar{3.0mu}{0.5mu}F}
\newcommand{\bK}{\FineBar{3.0mu}{0.5mu}K}
\newcommand{\bv}{\FineBar{1.5mu}{0.5mu}v}
\newcommand{\bw}{\FineBar{1.0mu}{0.5mu}w}
\newcommand{\ebKQ}{{}_{\Ep}\mkern 3.0mu\overline{\mkern-3.0mu\KQ\mkern-0.5mu}\mkern 0.5mu}
\newcommand{\ebU}{{}_{\Ep}\mkern 2.0mu\overline{\mkern-2.0muU\mkern-0.5mu}\mkern 0.5mu}
\newcommand{\ebV}{{}_{\Ep}\mkern 1.0mu\overline{\mkern-2.5muV\mkern-0.5mu}\mkern 0.5mu}
\newcommand{\ebW}{{}_{\Ep}\mkern 1.0mu\overline{\mkern-2.0muW\mkern-0.5mu}\mkern 0.5mu}
\begin{document}

\title{Algebraic and Hermitian $K$-theory
of $\K$-rings
}

\author{Max Karoubi}
\address{Université Denis Diderot -- Paris 7\\ Institut de Mathématiques de Jussieu Paris · Rive Gauche}
\email{max.karoubi@gmail.com}

\author{Mariusz Wodzicki}
\address{Department of Mathematics\\
University of California\\
Berkeley}
\email{wodzicki@math.berkeley.edu}
\thanks{Mariusz Wodzicki was partially supported by the NSF grant 1001846.}

\subjclass{19K99}

\keywords{operator ideals, algebraic $K$-theory, Bott periodicity}
\begin{abstract}
The main purpose of the present article is to establish
the real case of ``Karoubi's Conjecture'' \cite{KaroubiSLN}
in algebraic $K$-theory. The complex case was proved
in 1990-91 (cf.\ \cite{Suslin-Wodzicki} for
the $C^{\ast}$-algebraic form of the conjecture, and
\cite{Wodzicki.ECM} for both the Banach and $C^{\ast}$-algebraic
forms). 
Compared to the case of complex algebras,
the real case poses additional difficulties.
This is due to the fact that topological
$K$-theory of real Banach algebras has period 8
instead of 2. The method we employ to
overcome these difficulties can also
be used for complex algebras, and provides
some simplifications to the original proofs.
We also establish a natural analog of
Karoubi's Conjecture for Hermitian
$K$-theory.
\end{abstract}

\maketitle

\pagestyle{myheadings}

\vspace{3ex}
\section{Introduction}

\smallskip
Let $A$ be a complex $C^{\ast}$-algebra and $\K=\K(H)$ be the ideal
of compact operators on a separable complex Hilbert space $H$.
One of us conjectured around 1977 that the natural comparison
map between the algebraic and topological $K$-groups 
\[
\ep_n\Cl K_n(\K\bar{\T}A) \LA K_n^\t(\K\bar{\T}A)\simeq K_n^\t(A)
\]
is an isomorphism for a suitably completed tensor
product $\bar{\T}$. The conjecture was announced
in \cite{KaroubiSLN} where accidentally only $\K\pT A$
was mentioned, and it was proved there for
$n\leq0$. In \cite{KaroubiJ.op.theory} the conjecture was established
for $n\leq2$ and $\K\bar{\T}A$ having the meaning of the
$C^{\ast}$-algebra completion of the algebraic tensor product
(in view of \emph{nuclearity} of $\K$, there is only one
such completion).

The $C^{\ast}$-algebraic form of the conjecture was established
for all $n\in\Z$ in 1990 \cite{Suslin-Wodzicki.PNAS},
\cite{Suslin-Wodzicki}. A year later the conjecture was
proved also for all $n\in\Z$ and $\K\pT A$ where $A$
was only assumed to be a Banach $\C$-algebra with one-sided
bounded approximate identity \cite{Wodzicki.ECM}.

Here we prove analogous theorems for $K$-theory
of real Banach and $C^{\ast}$-algebras, and then
deduce similar results in Hermitian $K$-theory.

\smallskip
The article is organized as follows.
In Section \ref{s:CF} we set the stage
by introducing the concept of a \emph{$\K$-ring},
which is a slight generalization and
a modification to what was called
a ``stable'' algebra in \cite{KaroubiJ.op.theory}.
We also introduce a novel notion of
a \emph{stable retract}.
Then we proceed to demonstrate that
the comparison map between algebraic
and topological $K$-groups in degrees $n\leq0$
is an isomorphism for Banach algebras
that are stable retracts of $\K\xT A$ for some
$C^{\ast}$-algebra $A$, or of $\K\pT A$,
for some Banach algebra $A$ (Theorem \ref{th:n<0}).

We recall how to endow $K_{\ast}(\K)$ with
a canonical structure of a $\Z$-graded,
associative, graded commutative, and unital ring
in Section \ref{s:Kst C}.
If a $\K$-ring is $H$-unital as a $\Q$-algebra,
we equip its $\Z$-graded algebraic $K$-groups
with a structure of a $\Z$-graded unitary
$K_{\ast}(\K)$-module.
We distinguish two cases: $\Kr$-rings
and $\Kc$-rings, where $\Kr$ stands for the ring
of compact operators on a real Hilbert space,
and $\Kc$ --- on the complex Hilbert space.

These structures are used to prove several
results, two of them:
2-periodicity of algebraic $K$-groups
for $\Kc$-rings (Theorem \ref{th:PTC}) and
a comparison theorem for Banach $\Kc$-rings
(Theorem \ref{th:KCst}), are derived in Section 2.
Both results were known before
\cite{Wodzicki.ECM}, \cite{Suslin-Wodzicki}.

Relying on a rather
delicate argument employing $K$-theory
with coefficients mod 16 and certain
classical results of stable homotopy theory,
we detect in Section \ref{s:Kst R}
the existence of an element
$v_8\in K_8(\Kr)$ which maps onto
a generator of $K_8^\t(\Kr)\simeq\Z$.
We showed that $K_{-8}(\Kr)\simeq\Z$
in Section \ref{s:CF}. The element $v_8$
is the multiplicative inverse of
a generator of that group.
This allows us to establish 8-periodicity
of algebraic $K$-groups for arbitrary
$\Kr$-rings (Theorem \ref{th:PTR}),
and to prove that the comparison map
is an isomorphism for Banach
$\Kr$-rings
(``the real case of Karoubi's Conjecture'').
Both results are new.

The remaining sections are devoted
to Hermitian $K$-theory. The goal
is to deduce comparison theorems
in Hermitain $K$-theory from
the corresponding results in
algebraic $K$-theory (for complex
$C^{\ast}$-algebras this was partially done
in \cite{Battikh}). The primary
tool in this task is a pair of \emph{Comparison Induction
Theorems} which are among the consequences
of the ``Fundamental Theorem of Hermitian
$K$-theory'' \cite{KaroubiAnnals}.
In the same chapter we also provide
a rather thorough discussion of
Hermitian $K$-groups for nonunital rings,
we compare two approaches to defining
relative Hermitian $K$-groups
and, in the end, we deduce excision
properties in Hermitian $K$-theory from
the corresponding properties
in algebraic $K$-theory,
cf.\ \cite{BattikhCR}, \cite{Battikh}.
This is achieved with a different
kind of Induction Theorems.

We would like to point out that the results of Sections 2
through 8 hold also for the corresponding algebraic and
Hermitian $K$-groups with finite coefficients.
The hypothesis of $H$-unitality over $\Q$ can be dropped
since $\Q$-algebras satisfy excision in algebraic and
Hermitian K-theory with finite coefficients.

In two appendices that follow we collect
material frequently used throughout the article.
We review some facts about multiplicative
structures in algebraic $K$-theory
in Appendix \ref{Ap BAI}, indicating
difficulties of the nonunital case.
Appendix \ref{Ap BAI} provides
a well known characterization of
pure-exact extensions of Banach spaces,
and a much less known criterion of
pure-exactness for extensions of Banach
modules associated with ideals in Banach
algebras. Complete proofs are given
for the reader's convenience.

\smallskip
The present article was written so that
the presentation would be accessible
to experts in theory of Banach and operator
algebras. We hope to attract their attention
to the possibilities offered by the interplay
between real structures, involutive algebras,
and $K$-theory.

\smallskip
We would like to thank Thierry Fack who kindly
agreed to write an appendix where he supplies
a short proof of exactness of the maximal
tensor product on the category of
$C^{\ast}$-algebras---a result we could
not find a satisfactory reference to
in the existing literature.

\medskip
{\small
Aknowledgements. We thank C.~Anantharaman, T.~Fack,
A.~Guichardet and S.~Wassermann for discussions
about $C^{\ast}$-algebra tensor products,
and J.~A.~Neisendorfer for providing an up to date
information on multiplicative structures in
homotopy groups with finite coefficients.
}

\bigskip
\section{Natural transformations $\ep_n\Cl K_n\to K_n^\t$
 for $n\leq0$}\label{s:CF}

\smallskip
For $n<0$, the algebraic and the topological
$K$-groups both can be defined in terms of $K_0$ using
the notions of the cone and suspension functor
suitable for rings and, respectively, for Banach algebras.
The compatibility of the two constructions
leads to a sequence of natural transformations
of functors $\ep_n\Cl K_n\to K_n^\t$ and the
associated connecting homomorphisms, which means
that any extension in the category of real or
complex Banach algebras,
\BE{ABC}
\BD
 \node{A}\node{B}\arrow{w,t,A}{\pi}\node{C}\arrow{w,t,V}{\i}
\ED\;,
\EE
induces a morphism of half-infinite exact sequences
{\small
\BE{LES ABC}
\BD\dgARROWLENGTH=2.2em
 \node{\cdots}
  \node{K_n(B)}\arrow{w,t}{\pi_n}\arrow{s,l}{\ep_n}
   \node{K_n(C)}\arrow{w,t}{\i_n}\arrow{s,l}{\ep_n}
    \node{K_{n+1}(A)}\arrow{w,t}{\dd_{n+1}}\arrow{s,l}{\ep_{n+1}}
     \node{K_{n+1}(B)}\arrow{w,t}{\pi_{n+1}}\arrow{s,l}{\ep_{n+1}}
      \node{\cdots}\arrow{w,t}{\pi_{n+1}}\\
 \node{\cdots}
  \node{K_n^\t(B)}\arrow{w,t}{\pi_n^\t}
   \node{K_n^\t(C)}\arrow{w,t}{\i_n^\t}
    \node{K_{n+1}^\t(A)}\arrow{w,t}{\dd_{n+1}^\t}
     \node{K_{n+1}^\t(B)}\arrow{w,t}{\pi_{n+1}^\t}
      \node{\cdots}\arrow{w,t}{\pi_{n+1}^\t}
\ED\;,
\EE}%
where $n<0$. Below, we will refer to $\ep_n$ as the \emph{comparison
maps}.
They are defined for all $n\in\Z$, and when the comparison map
\BE{K comp}
\ep_n\Cl K_n(A)\LA K_n^\t(A)
\EE
is an isomorphism, we
say that the Banach algebra $A$ is \emph{$K_n$-stable}.

We say that a $k$-algebra $R$ is a \emph{stable retract}
of a $k$-algebra $A$ if,
for some $l\geq1$, the \emph{stabilization homomorphism}
\BE{i_l}
\i_l\Cl R\LHRA M_l(R),\qquad
r\longmapsto
 \left(
  \begin{BMAT}(rc,0pt,20pt)[4pt]{c.c}{c.c}
   r & 0_{1,l-1} \\
   0_{l-1,1} & 0_{l-1,l-1}
  \end{BMAT}
 \right)
\EE
factorizes through $A$,
\BE{i=lk}
\BD
\node{R}\arrow[2]{e,t,V}{\i_l}\arrow{se,b,,..}{\k}
 \node[2]{M_l(R)}\\
\node[2]{A}\arrow{ne,b,,..}{\nu}
\ED\;,
\EE
with $\k$ and $\nu$ being Banach algebra homomorphisms.
Note that \emph{to be a stable retract of} is a transitive
relation: if $A$ is a stable retract of $B$, then $R$ is also
a stable retract of $B$.
A stable rectract of a $K_n$-stable algebra
is automatically $K_n$-stable.

\medskip
Let us consider the $C^{\ast}$-algebra $\B(H)$ of bounded linear
operators on a Hilbert space $H$.
The space itself can be \emph{real} or \emph{complex}.
We will denote by $F$ the corresponding field of coefficients,
which is $\R$ in the former case, and $\C$ in the latter.

In the case when $H$ is infinite dimensional
$\B(H)$ is an example of what Farrell
and Wagoner call an \emph{infinite
sum ring} \cite[p.\,477]{FarrellWagoner}.
For an infinite sum ring $R$, its general
linear group $GL(R)$ is acyclic
\cite[Corollary 2.5]{Wagoner} which means
that $BGL(R)^+$ is contractible.
Since the property of being an infinite
sum ring is preserved under suspensions,
$K_n(R\T_k A)=0$ for all $n\in\Z$.
If $R$ is an infinite sum $k$-algebra,
which means that the endomorphism
\BE{r inf}
r\LMT r^\infty\qquad(r\in R),
\EE
the key ingredient of the structure
of an infinite sum ring, is $k$-linear,
then $R\T_k A$ is again an infinite sum
$k$-algebra and, accordingly, $K_n(R\T_k A)=0$
for all unital $k$-algebras
and arbitrary $n\in\Z$.

If $R$ is an infinite sum Banach algebra,
which means that endomorphism \eqref{r inf}
is continuous, then so is the completed
projective tensor product $R\pT B$
with any unital Banach algebra $B$,
and both algebraic and topological
$K$-groups of $K_n(R\pT B)$
vanish in all degrees.
Finally, when $R$ is an infinite sum
$C^{\ast}$-algebra, which means
that \eqref{r inf} is a $C^{\ast}$-endomorphism,
then so is the tensor product $R\xT C$
with any unital $C^{\ast}$-algebra $C$
(the maximal tensor product
can be replaced by any functorial
 $C^{\ast}$-product).

The algebra of bounded linear operators on
an infinite dimensional Hilbert space is
an infinite sum $C^{\ast}$-algebra.\label{BH inf sum}
In the course of this work we shall repeatedly
use the fact that
\BE{KB A=0}
K_{\ast}(\B(H)\T A)=K_{\ast}(\B(H)\pT B)=K_{\ast}(\B(H)\xT C)=0
\EE
and
\BE{KtB A=0}
K_{\ast}^\t(\B(H)\pT B)=K_{\ast}^\t(\B(H)\xT C)=0
\EE
for any unital $F$-algebra $A$, Banach algebra
$B$, and $C^{\ast}$-algebra $C$.

Vanishing of algebraic and topological $K$-groups
of $\B(H)$ and various tensor products
with unital algebras can also be established
using the language of \emph{flabby
categories} \cite{Karoubiflabby},
and has been a part of the $K$-theoretical
lore for more than four decades.

\bigskip
When $H$ is infinite dimensional and separable,
then $\K=\K(H)$ is a unique proper, nonzero and
closed ideal in $\B=\B(H)$; it contains every proper
nonzero ideal in $\B$ as a dense subset and
the quotient $\B/K$ is called
the \emph{Calkin algebra}.
Below, $H$ will always denote an infinite-dimensional
separable Hilbert space.

For any closed subspace $H'\sbeq H$, there is an
associated nonunital embedding of $C^{\ast}$-algebras
\BE{i}
\i^{H'H}\Cl\B(H')\LHRA\B(H),
 \qquad a\LMT i^{H'H}\c a\c p^{HH'},
\EE
where $p^{HH'}\Cl H\to H'$ denotes the projection
onto $H'$ with kernel $(H')^\perp$ and $i^{H'H}$
denotes the inclusion of $H'$ into $H$. It sends
$\K(H')$ to $\K(H)$.
When $H'$ is finite dimensional, then
\[
\K(H')=\B(H')=\End_FH'.
\]
In the one dimensional case we obtain a nonunital
algebra homomorphism
\BE{F->K}
\i\Cl F=\End_FH'\LHRA\K(H),\qquad 1\LMT i^{H'H}\c p^{HH'}.
\EE

Let us consider the following 2-parameter family
of $C^{\ast}$-algebras
\BE{Flm}
F_{lm}:=\K^{\xT l}\xT(\B/\K)^{\xT m}\qquad(l,m\in\N),
\EE
and, given a $C^{\ast}$-algebra $A$, the associated 2-parameter family
\BE{Alm}
A_{lm}:=F_{lm}\xT A,
\EE
where the \emph{maximal} tensor product
is formed in the category of $C^{\ast}$-algebras
over $F$.
By tensoring the extension of $C^{\ast}$-algebras
\BE{CBK}
\BD
 \node{\B/\K}\node{\B}\arrow{w,,A}\node{\K}\arrow{w,,V}
\ED\;,
\EE
with $A_{lm}$, we obtain the extension
\BE{CBK Alm}
\BD
 \node{A_{l,m+1}}\node{\B\xT A_{lm}}\arrow{w,,A}
  \node{A_{l+1,m}}\arrow{w,,V}
\ED\;.
\EE
Exactness of \eqref{CBK Alm} is, at least in the complex case,
a well known feature of the maximal tensor product of
$C^{\ast}$-algebras, see \cite{Fack}.

In view of \eqref{KB A=0}--\eqref{KtB A=0},
commutative diagram \eqref{LES ABC}, whose rows
are formed by the long exact sequences associated
with extension \eqref{CBK Alm}, splits into a sequence of
commutative squares with connecting homomorphisms being isomorphisms
\BE{CBK A n}
\BD\dgARROWLENGTH=2.6em
  \node{K_n(A_{l+1,m})}\arrow{s,l}{\ep_n^{l+1,m}}
   \node{K_{n+1}(A_{l,m+1})}\arrow{w,tb}{\dd_{n+1}}{\sim}
     \arrow{s,l}{\ep_{n+1}^{l,m+1}}\\
  \node{K_n^\t(A_{l+1,m})}
   \node{K_{n+1}^\t(A_{l,m+1})}\arrow{w,tb}{\dd_{n+1}^\t}{\sim}
\ED\;.
\EE
This demonstrates that $A_{l+1,m}$ is $K_n$-stable if
and only if $A_{l,m+1}$ is $K_{n+1}$-stable.
Since every Banach algebra is $K_0$-stable, we deduce that
$A_{lm}$ is $K_n$-stable if $-l\leq n\leq0$.

Note that
\[
F_{l0}=\K(H)^{\xT l}
 =\K\bigl(H^{\ti\T l}\bigr)\simeq F_{10},
  \qquad(l\in\N),
\]
where $H^{\ti\T l}$ is the completion of
the pre-Hilbert space $H^{\T_F l}$.
In particular, for any $C^{\ast}$-algebra, one has
a $C^{\ast}$-algebra isomorphism
\[
A_{10}\simeq A_{l0}\qquad(l>0).
\]
In the theory of $C^{\ast}$-algebras, an algebra $A$
is said to be \emph{stable} if it is isomorphic to $A_{10}$.
The terminology reflects the fact $\K\xT A$ is
the (unique) $C^{\ast}$-algebra completion of
$M_\infty(A)$, as well as the fact that
$\K\xT A\simeq A$ if $A$ is a stable $C^{\ast}$-algebra.
The above argument demonstrates that stable
$C^{\ast}$-algebras are $K_n$-stable for $n\leq0$.

We shall adapt the above argument to general Banach algebras
by replacing $\xT$ with $\pT$ in the definition of
the family $F_{lm}$,
\BE{F^lm}
\hF_{lm}:=\K^{\pT l}\pT(\B/\K)^{\pT m}\qquad(l,m\in\N),
\EE
and, given a Banach algebra $A$, in the definition of
the associated 2-parameter family $A_{lm}$,
\BE{A^lm}
\hA_{lm}:=\hF_{lm}\pT A,
\EE
remembering that the projective tensor product is formed in
the category of Banach spaces over $F$.
As an extension of Banach spaces,
\eqref{CBK} is \emph{pure}
(see Appendix \ref{Ap BAI}),
therefore projective tensor product
with $\hA_{lm}$ preserves its exactness
and we obtain the extension
\BE{CBK A^lm}
\BD
 \node{\hA_{l,m+1}}\node{\B\pT\hA_{lm}}\arrow{w,,A}
  \node{\hA_{l+1,m}}\arrow{w,,V}
\ED\;.
\EE
By invoking \eqref{KB A=0}--\eqref{KtB A=0} again,
we conclude precisely as we did for $C^{\ast}$-algebras
that $\hA_{lm}$ is $K_n$-stable if $-l\leq n\leq0$.

For a given orthogonal projection $p\in\K$ of rank 1,
the correspondence
\[
\K(H)\LA\K(H)^{\pT l},\qquad
 c\LMT p^{\pT(l-1)}\pT c,
\]
induces a Banach algebra homomorphism $\k_l\Cl\hF_{10}\to\hF_{l0}$.
For $L=p(H)^{\T_F(l-1)}$ let $L^\perp$ be the orthogonal complement
of $L$ in $H^{\ti\T(l-1)}$. The obvious identification
of $L\T_F H$ with $H$ combined with an arbitrary Hilbert
space isomorphism $L^\perp\ti\T H\simeq H$, induces
an isomorphism of $C^{\ast}$-algebras
$\K\bigl(H^{\ti\T l}\bigr)\simeq M_2(\K(H))$.
When composed with the tensor-product-of-operators map
\[
\K(H)^{\pT l}\LHRA\K\bigl(H^{\ti\T l}\bigr),
\]
it produces a homomorphism of Banach algebras
\[
\nu_l\Cl\K^{\pT l}\LA M_2(\K(H))
\]
and the composite $\nu_l\c\k_l$ coincides with
$\i_2\Cl\K\HRA M_2(\K)$.
Setting $\k=\k_l\T_F\id_A$ and $\nu=\nu_l\T_F\id_A$ provides
then a similar factorization of
$\i_2\Cl\hA_{10}\HRA M_2(\hA_{10})$
through $\hA_{l0}$. This demonstrates that,
for any Banach algebra $A$
and any $l>0$, the algebra $\hA_{10}$ is a stable retract of
$\hA_{l0}$. In particular, $\hA_{10}$ is $K_n$-stable for
any $n\leq0$.

\medskip
We arrive at the following theorem which is a generalization
of the main result of \cite{KaroubiSLN}. We emphasize that
it holds for real and complex Banach algebras alike.

\begin{theorem}\label{th:n<0}
Any Banach algebra which is a stable retract of
$\K\xT A$, for some $C^{\ast}$-algebra $A$, or of $\K\pT A$,
for some Banach algebra $A$, is $K_n$-stable for $n\leq0$.
\end{theorem}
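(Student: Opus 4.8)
The plan is to assemble the theorem from the two reductions carried out above, the essential input being that the stable-retract relation preserves $K_n$-stability. I would organize the argument in three steps, handling the $C^{\ast}$-algebraic and Banach cases in parallel before passing to an arbitrary stable retract.

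First, I would record that $A_{lm}$ and $\hA_{lm}$ are $K_n$-stable whenever $-l\leq n\leq0$. This is precisely the content of the commutative squares \eqref{CBK A n} and of their analogue built from extension \eqref{CBK A^lm}: the vanishing results \eqref{KB A=0}--\eqref{KtB A=0} collapse the two long exact sequences into squares whose connecting maps are isomorphisms, so that $A_{l+1,m}$ is $K_n$-stable if and only if $A_{l,m+1}$ is $K_{n+1}$-stable. Starting from the universal $K_0$-stability of Banach algebras and iterating downward one degree at a time yields the asserted range.

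Second, I would upgrade ``$-l\leq n\leq0$'' to ``all $n\leq0$'' for the two base algebras. In the $C^{\ast}$ case I would exploit the $C^{\ast}$-isomorphism $A_{10}\simeq A_{l0}$: for a fixed $n\leq0$, choosing $l\geq -n$ shows that $\K\xT A=A_{10}$ inherits the $K_n$-stability of $A_{l0}$. The Banach case admits no such isomorphism, so here I would instead invoke the stable-retract factorization constructed just before the statement, which exhibits $\hA_{10}=\K\pT A$ as a stable retract of $\hA_{l0}$; since a stable retract of a $K_n$-stable algebra is itself $K_n$-stable, choosing $l\geq -n$ again gives that $\K\pT A$ is $K_n$-stable for every $n\leq0$.

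Finally, let $R$ be any Banach algebra that is a stable retract of $\K\xT A$ or of $\K\pT A$. By the previous step the ambient algebra is $K_n$-stable for all $n\leq0$, and since the stable-retract relation preserves $K_n$-stability, $R$ is $K_n$-stable for all $n\leq0$. I do not anticipate a genuine obstacle: the substantive work---exactness of the tensored extensions, the vanishing of the $K$-groups of the $\B(H)$-tensor products, and the explicit factorization in the Banach case---has already been carried out above, and the theorem is their formal consequence. The one point demanding care is that, in the Banach setting, one cannot identify $\K\pT A$ with $\K^{\pT l}\pT A$, so the passage to arbitrarily negative degrees must route through the stable-retract mechanism rather than through an isomorphism.
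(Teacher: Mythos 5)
Your proposal is correct and follows the paper's own argument essentially verbatim: the collapse of the long exact sequences for \eqref{CBK Alm} and \eqref{CBK A^lm} into the squares \eqref{CBK A n} giving $K_n$-stability of $A_{lm}$ and $\hA_{lm}$ for $-l\leq n\leq0$, the $C^{\ast}$-isomorphism $A_{10}\simeq A_{l0}$ in one case and the stable-retract factorization of $\i_2$ through $\hA_{l0}$ in the other, and finally transitivity of the stable-retract relation together with its preservation of $K_n$-stability. You also correctly isolate the one delicate point, namely that the Banach case must route through the retract mechanism rather than an isomorphism, exactly as the paper does.
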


\begin{remark}
The bilinear pairing
\[
\K\x(\K\xT A) \LA \K\xT(\K\xT A),\qquad(c,\a)\LMT c\T\a,
\]
gives rise to a homomorphism of Banach algebras
\[
\K\pT(\K\xT A) \LA \K\xT(\K\xT A).
\]
Composition with $\i\pT\id_{\K\xT A}$, where $\i\Cl F\HRA\K$
is the inclusion defined in \eqref{F->K},
is a homomorphism
\[
\K\xT A\LHRA \K\xT(\K\xT A)\simeq(\K\xT\K)\xT A
\]
which is isomorphic to the inclusion $\K\xT A\HRA M_2(\K\xT A)$.
In particular, $\K\pT A$ is a stable retract of $\K\pT(\K\xT A)$,
thus a separate argument used by us to demonstrate
$K_n$-stability of $\K\xT A$ is, in fact, redundant.
\end{remark}

\bigskip
\section{$K$-theory of $\Kc$-rings}\label{s:Kst C}

\smallskip
In 1970-ties one of us \cite{KaroubiSLN}
discovered that algebraic K-theory of unital rings
and topological K-theory of Banach algebras both possess
$\Z$-graded multiplicative structures that are
associative, graded-commutative, and compatible
with the comparison map
\BE{CM}
\ep_{\ast}\Cl K_{\ast}(A)\LA K_{\ast}^\t(A),
\EE
where
\[
K_{\ast}(A)=\bigoplus_{n\in\Z}K_n(A)
 \AND
  K_{\ast}^\t(A):=\bigoplus_{n\in\Z}K_n^\t(A),
\]
and $A$ denotes a Banach algebra.
That early observation provides a convenient
starting point for this chapter.
For any unital ring $R$, the tensor product map
\BE{tens}
\TZ\Cl R\x R\to R\TZ R,\qquad(r,r')\mapsto r\T r',
\EE
is a \emph{bimultiplicative} pairing in the sense of
Appendix \ref{Ap MSK} and therefore induces
a homomorphism of $\Z$-graded abelian groups
\BE{t K}
K_{\ast}(R)\TZ K_{\ast}(R)\to K_{\ast}(R\TZ R).
\EE
It follows that any unital ring homomorphism
\BE{mu}
\mu:R\TZ R\to R
\EE
induces a $\Z$-graded (nonassociative) ring structure on $K_{\ast}(R)$.

If the multiplication map \eqref{mu} is $\a$-associative
for a certain ring automorphism $\a\in\Aut(R)$, i.e., if 
\[
\mu\c(\id_{R}\T\mu)=\a\c\mu(\mu\T\id_{R}),
\]
 then the induced multiplication on $K_{\ast}(R)$ is
$\a_{*}$-associative,
where $\a_{*}$ denotes the induced automorphism of $K_{\ast}(R)$.

Similarly, if $\mu$ is $\b$-commutative
for a certain ring automorphism $\b\in\Aut(R)$, i.e., if 
\[
\mu\c\tau=\b\c\mu,
\]
where $\tau$ transposes factors of $R\TZ R$, then the induced
multiplication on $K_{\ast}(R)$ is graded $\b_{*}$-commutative,
\[
vu=(-1)^{\ti u\ti v}\b_{\ast}(uv),
\]
where $\ti w$ denotes the \emph{parity} of the degree
of an element $w\in K_n(R)$.

In a similar vein, any unital ring homomorphism 
\BE{lambda}
\l:R\TZ S\to S
\EE
induces a graded map 
\[
K_{\ast}(R)\TZ K_{\ast}(S) \LA K_{\ast}(S)\label{K_*-mod}.
\]
If $R$ itself is equipped with a multiplication \eqref{mu},
and if there exists an automorphism $\g\in\Aut(S)$ such that 
\[
\l\c(\id_{R}\T\l)=\g\c\l\c(\mu\T\id_{S}),
\]
then the product maps induced on the corresponding
$\Z$-graded $K$-groups by $\l$ and $\mu$ satisfy
a similar associativity identity twisted by $\g_{*}$.

\medskip
Some of the most interesting ``multiplications'' $\mu$
and ``actions'' $\l$ involve nonunital rings.
Extension of the above multiplicative structures
to $K$-theory of nonunital rings is straightforward
provided each of the following rings,
\[
R\TZ R,\qquad R\TZ S,\qquad
 R\TZ R\TZ R,\AND R\TZ R\TZ S,
\]
satisfies excision in algebraic $K$-theory.

The problem how to characterize rings that
satisfy excision in rational algebraic
$K$-theory was solved in a pair of articles
\cite{Wodzicki.Annals} and \cite{Suslin-Wodzicki}: 
\BE{ChEx}
\parbox[s]{3.1in}{\textit{a ring $R$ satisfies
 excision in rational algebraic $K$-theory if
 and only if $R\TZ\Q$ is an $H$-unital
 $\Q$-algebra.}}
\EE

\emph{$H$-unitality}
 	was introduced in \cite{Wodzicki.CR}
	where it was shown to characterize
	algebras satisfying excision in
	Hochschild and cyclic homology.
For $\Q$-algebras, excision in $K$-theory and
in rational $K$-theory are equivalent, hence
\eqref{ChEx}
provides also a complete characterization of $\Q$-algebras
that satisfy excision in algebraic $K$-theory
\cite[Theorem B and the remark preceding it]{Suslin-Wodzicki}.
The validity of the above theorem is closely related to
the following surprisingly delicate fact
\cite[Theorem 7.10]{Suslin-Wodzicki}
\BE{H-un tens}
\parbox[s]{2.5in}{
\textit{the category of $H$-unital $k$-algebras
 over an arbitrary ground ring $k$ is closed
 under $\T_k\,$}.}
\EE

This result is particularly useful when extending
the multiplicative structures of $K$-theory to
$H$-unital $\Q$-algebras. It guarantees, in particular,
that the ``multiplication'' and ``action'' induced
in $K$-theory by $\mu$ and $\l$ satisfy twisted
associativity and commutativity properties,
mirroring the ones satisfied by $\mu$ and $\l$
themselves.

\medskip
Remarkably, $H$-unitality is a relatively
frequent phenomenon among rings
originating in Operator Theory and
Functional, as well as Global Analysis.
For example, any Banach algebra with bounded
left approximate identity satisfies
the Triple Factorization Property ($\Phi$)
introduced in \cite{Wodzicki.PNAS}
(cf.~\cite[Proposition 10.2]{Suslin-Wodzicki}).
Any ring $R$ with this property satisfies excision
in algebraic $K$-theory \cite[Theorem C]{Suslin-Wodzicki}.
Such a ring is also a flat right module over an
arbitrary unital ring $A$ that contains $R$ as
a right ideal (cf.\ \cite[Proposition 4]{Wodzicki.PNAS}
where this property of $R$ is called \emph{right
universal flatness}).
Similarly for Banach algebras with bounded right
approximate identity: they are left universally flat.

A universally flat ring is $H$-unital in the
category of $k$-algebras for any ground ring $k$
and any \emph{flat} $k$-algebra structure on $R$,
cf.\ \cite[Theorem 1]{Wodzicki.PNAS}. In particular,
Banach algebras with bounded one-sided approximate
identities satisfy excision in algebraic $K$-theory
and are $H$-unital as $k$-algebras for every flat
$k$-algebra structure on them.
Projective tensor product of Banach algebras with
bounded left approximate identity has a
bounded left approximate identity itself.
Finally, tensor products over $\Z$ of Banach algebras
with bounded approximate identity are $H$-unital
as $\Z$-algebras, and therefore also as $\Q$-algebras,
according to Theorem \eqref{H-un tens} quoted above.

\smallskip
This litany of remarkable algebraic and homological
properties means that the mutiplicative structures
discussed at the beginning of this chapter
extend to any Banach algebra with one-sided bounded
approximate identity.
This includes every $C^{\ast}$-algebra as well as every
closed one-sided ideal in a $C^{\ast}$-algebra.
It also includes $\K\pT A$ where
$A$ is an arbitrary Banach algebra
with one-sided bounded approximate identity
(note that $\K$ itself possesses a two-sided
approximate identity bounded by 1).

\smallskip
Let us record one more favorable property
of $H$-unital algebras. If a $\Q$ algebra $R$
is $H$-unital, so is the matrix algebra $M_l(R)$
\cite[Corollary 9.8]{Wodzicki.Annals} (this is,
of course, a special case of Theorem \eqref{H-un tens}),
and therefore satisfies excision in algebraic
$K$-theory. This implies that the stabilization
map of \eqref{i_l} induces an isomorphism in
algebraic $K$-theory. The argument
utilised in the proof of Corollary 9.10 in \cite{Wodzicki.Annals}
then shows that conjugation by any element
$a\in\GL_1(A)$, for any unital ring $A$ containing
$R$ as a two-sided ideal, acts trivially on $K_{\ast}(R)$.

\smallskip
If $R$ and $S$ are Banach algebras and if $\l$, $\mu$ and
the constraint maps, $\a$, $\b$, and $\g$, are continuous,
then $K_{\ast}^\t(R)$ becomes
a $\Z$-graded, $\a_{\ast}$-associative, graded $\b_{\ast}$-commutative ring,
and $K_{\ast}^\t(S)$ becomes a $\Z$-graded, $\g_{\ast}$-associative
$K_{\ast}^\t(R)$-module. Since excision holds in topological
$K$-theory for all Banach algebras without exception,
neither $R$ nor $S$ are subjected to further
restrictions similar to $H$-unitality.
For Banach algebras which are $H$-unital
as $\Q$-algebras, the multiplicative structures
in algebraic and topological $K$-theory are compatible
with the comparison map, i.e.,
the following diagrams,
\BE{comp mult}
\BD
 \node{K_{\ast}(R)\x K_{\ast}(R)}\arrow{e,t}{\mu_{\ast}}\arrow{s,l}{\ep_{\ast}\x\ep_{\ast}}
  \node{K_{\ast}(R)}\arrow{s,r}{\ep_{\ast}}\\
 \node{K_{\ast}^\t(R)\x K_{\ast}^\t(R)}\arrow{e,t}{\mu_{\ast}}\node{K_{\ast}^\t(R)}
\ED
\aND
\BD
 \node{K_{\ast}(R)\x K_{\ast}(S)}\arrow{e,t}{\l_{\ast}}\arrow{s,l}{\ep_{\ast}\x\ep_{\ast}}
  \node{K_{\ast}(S)}\arrow{s,r}{\ep_{\ast}}\\
 \node{K_{\ast}^\t(R)\x K_{\ast}^\t(S)}\arrow{e,t}{\l_{\ast}}\node{K_{\ast}^\t(S)}
\ED\;,
\EE
commute.

An important example of a noncommutative ring equipped with
a multiplication \eqref{mu} is provided by the ring
$\K=\K(H)$ of compact operators on a separable,
real or complex, infinite-dimensional Hilbert space.
Let us consider the map induced by tensor product of
operators
\begin{equation}\label{TPO}
\K(H)\TZ\K(H) \LA \K(H)\T_F \K(H)\LHRA\K(H\ti{\T}H)
\end{equation}
where $H\ti\T H$ denotes the Hilbert-space completion of
the pre-Hilbert space $H\T_F H$ and $F$ denotes the field
of coefficients of $H$.

Composition of \eqref{TPO} with the isomorphism 
\begin{equation}\label{K HtH=K H}
\K(H\ti\T H)\simeq\K(H)
\end{equation}
induced by a fixed but otherwise arbitrary identification
$H\ti{\T}H\simeq H$ of Hilbert spaces, defines
a multiplication $\mu$ on $\K$ which depends
on the choice of that identification.
This multiplication is $\ad_{U}$-associative and
$\ad_{V}$-commutative for suitable bounded invertible
operators $U,V\in\B(H)$ where $\ad_{E}(X):=EXE^{-1}$.

Any orthogonal projection $p\in\K$ of rank 1 is a
``stable quasiidentity'' for this multiplication, which
means that the following diagram commutes
\BE{qid}{\dgARROWPARTS=8
\BD
\node[2]{\K}\arrow{se,t,}{\ad_{W'}}\\
\node{\K}\arrow[2]{e,t,V,5}{\i_2}\arrow{ne,t,}{\mu(p\T\;\;)}
                            \arrow{se,b,}{\mu(\;\;\T p)}
 \node[2]{M_2(\K)}\\
\node[2]{\K}\arrow{ne,b,}{\ad_{W''}}
\ED}
\EE
for suitable isomorphisms $W'$ and $W''$ between
$H$ and $H\ds H$, where $\i_2$, as usual, denotes
the stabilization map of \eqref{i_l}.

\medskip
We are ready to introduce a certain class of
rings equipped with a quasiassociative
action of $\K$, i.e., with
a ring homomorphism
\BE{KR->R}
\l\Cl\K\T_\Z R\to R
\EE
which is $\g$-associative for some $\g\in\Aut\,R$.
Note that quasiassociativity involves both $\l$
and a fixed multiplication $\mu$ on $\K$.
If \eqref{KR->R} is ``stably quasiunitary''
which means that the following diagram commutes
\BE{qun}
\begin{diagram}
\node{R}\arrow{e,t,V}{\i_2}\node{M_2(R)}\arrow{s,r}{\th}\\
\node{R}\arrow{e,t,V}{\i_2}\arrow{n,l}{\l(p\T\;\; )}
 \node{M_2(R)}
\end{diagram}
\EE
for a suitable automorphism $\th$ of $M_2(R)$,
we will call $R$ a \emph{$\K$-ring}.
More precisely, we will call it a \emph{real $\K$-ring},
if $\K=\Kr$ is the algebra of compact operators on
a real Hilbert space,
and a \emph{complex $\K$-ring}, if $\K=\Kc$
is the algebra of compact operators on
a complex Hilbert space. Alternatively,
we will be talking of $\Kr$- and $\Kc$-rings.

A Banach algebra will be called a
\emph{Banach $\K$-ring} if \eqref{KR->R}
induces a homomorphism of Banach algebras
$\K\pT R\to R$ while $\g$ and $\th$
are continuous.


The ring of the form
$R=\K\T_k A$, where $A$ is an
algebra over a subring $k\sbeq F$,
is an example of a $\K$-ring provided
$R$ is $H$-unital over $\Q$.
The projective tensor product
$R=\K\pT A$ by a Banach algebra $A$
with a one-sided bounded approximate
identity supplies an example
of a Banach $\K$-ring. Finally,
\emph{stable} $C^{\ast}$-algebras
$R=\K\xT A$ are examples of
Banach $\K$-rings among
$C^{\ast}$-algebras.

These are the obvious examples, so to speak.
Less obvious is the Calkin algebra
$\B/\K$ and its tensor products in any
of the above three senses.
Another example of a unital Banach $\K$-ring
is suppplied by the algebra defined as the
fibered square of $\B$ over $\B/\K$
\BE{B2}
\B^{(2)}:=\{(a,b)\in\B\x\B\mid a-b\in\K\}.
\EE
The $C^{\ast}$-algebra defined in \eqref{B2}
fits into an extension
\BE{BB2K}
\BD
 \node{\B}\node{\B^{(2)}}\arrow{w,t,A}{p}\node{\K}\arrow{w,t,V}{i}
\ED\;,
\EE
where
\BE{KB2}
i\Cl\K\LHRA \B^{(2)},\qquad c\LMT(c,0),\qquad(c\in\K).
\EE
The long exact sequence in either algebraic or
topological $K$-theory demonstrates
independently of Theorem \ref{th:PTC}
that \eqref{KB2} induces isomorphisms
in $K$-theory
\[
K_{\ast}(\K)\simeq K_{\ast}\bigl(\B^{(2)}\bigr)\AND
 K_{\ast}^\t(\K)\simeq K_{\ast}^\t\bigl(\B^{(2)}\bigr).
\]

\medskip
Noting that the group $\GL_1(\B)=\GL_1(\B(H))$
acts trivially on $K_{\ast}(\K)$, we infer
that the multiplication induced by $\mu$
on $K_{\ast}(\K)$ is associative, graded commutative,
and does not depend on the isomorphism in
\eqref{K HtH=K H} induced by the
chosen identification $H\ti{\T}H\simeq H$.

\smallskip
For a one dimensional subspace $H'\subset H$,
the map induced by tensor product of operators
\[
\K(H')\x\K(H')\LA\K(H'\T_FH')
\]
coincides with multiplication in the ground field.
Moreover, the embedding of $F$ into $\K(H)$
which corresponds to the inclusion $H'\HRA H$, cf.\ \eqref{F->K},
is a homomorphism of the ground field
into $(\K,+,\mu)$ provided we choose an
isomorphism $H\ti\T H\simeq H$ which identifies
the one-dimensional subspace $H'\T H'\subset H\ti\T H$
with $H'\subset H$.
The induced map $\i_{\ast}\Cl K_{\ast}(F)\to K_{\ast}(\K)$ in this case
is a homomorphism of graded rings and simultaneously
a homomorphism of graded $K_{\ast}(F)$-modules.

\smallskip
In conclusion, the $\Z$-graded algebraic $K$-group
of any $\K$-ring $R$ which is $H$-unital as a $\Q$-algebra,
becomes a graded $\g_{\ast}$-associative module over $K_{\ast}(\K)$,
and the class $[p]$ of any rank 1 projection, which
is a generator of $K_0(\K)\simeq\Z$, acts on $K_{\ast}(R)$
via automorphism $\th_{\ast}$
\[
[p]w=\th_{\ast}(w),\qquad(w\in K_{\ast}(R)).
\]
For $R=\K\T_k A$, as well as for
$\K\pT A$ and $\K\xT A$,
constraints $\g$ and $\th$ can be realized as
inner automorphisms of unital rings that contain $R$
and, respectively, $M_2(R)$, as two-sided ideals.
Accordingly, $\g_{\ast}=\id$ and $\th_{\ast}=\id$, and
$K_{\ast}(R)$ is in those cases a strictly associative
and unitary $K_{\ast}(\K)$-module.

\bigskip
The $\Z$-graded \emph{topological} $K$-theory ring
of $\C$ is canonically isomorphic to the ring of Laurent
polynomials $\Z[u,u^{-1}]$ where $u\in K_2^\t(\C)$
corresponds to the generator of $\ti K_\C\bigl(S^2\bigr)$,
usually chosen to be the class of the line bundle $\mathcal O(1)$
on $P^1(\C)\simeq S^2$ \cite[Theorem 2.4.9]{AtiyahBook}. Accordingly,
the $\Z$-graded topological $K$-theory ring of any complex
Banach algebra $A$ becomes a graded unitary module over
$K_{\ast}^\t(\C)\simeq\Z[u,u^{-1}]$. Multiplication by $u$
defines canonical isomorphisms
$K_n^\t(A)\simeq K_{n+2}^\t(A)$, $n\in\Z$,
multiplication by $u^{-1}$ provides the inverse isomorphisms.
This is, of course, Bott periodicity in topological
$K$-theory of complex Banach algebras, seen through
the multiplicative structures of topological $K$-theory.

Any Banach algebra homomorphism $f\Cl A\to B$
induces a homomorphism $f_{\ast}\Cl K_{\ast}^\t(A)\to K_{\ast}^\t(B)$
of graded modules over $K_{\ast}^\t(\C)\simeq\Z[u,u^{-1}]$.
In particular, $f_{\ast}$ is an isomorphism if and only if
$f_n$ is an isomorphism for at least a single even
and a single odd degree $n$.
This is so for the inclusion $\i\Cl F\HRA\K$
defined in \eqref{F->K} since
\[
\i_0\Cl K_0(\C)\simeq K_0(\Kc)\AND
K_1^\t(\C)=K_1^\t(\Kc)=0.
\]
Hence $\i_{\ast}$ is an isomorphism of
graded $K_{\ast}^\t(\C)$-modules and,
being also a ring homomorphism,
is an isomorphism of graded rings.

In \cite{KaroubiSLN} and \cite{KaroubiJ.op.theory}
we demonstrated that the comparison maps
\[
\ep_{\pm2}\Cl K_{\pm2}(\Kc)\LA K_{\pm2}^\t(\Kc)
\]
are surjective.
Note that existence of $u_{-2}$ was already established
in Section \ref{s:CF} where we proved
that $K_n(\K)\simeq K_n^\t(\K)$ for all $n\leq0$,
in real and complex cases alike.

Since $\ep_0$ is the identity map and
the comparison map is a homomorphism of
graded rings,
there must exist elements $u_{\pm2}\in K_{\pm2}(\Kc)$ such that
$u_{-2}u_2=1\in K_0(\Kc)\simeq\Z$.
The graded ring $K_{\ast}(\Kc)$ thus possesses an invertible
element of degree 2. By invoking Theorem \ref{th:n<0},
which says that $\ep_{\ast}$ is an isomorphism in degrees $\leq0$
we arrive at the following result.

\begin{theorem}[\cite{Suslin-Wodzicki}, \cite{Wodzicki.ECM}]\label{th:Kc comp}
The comparison map $\ep_{\ast}\Cl K_{\ast}(\K(H))\to K_{\ast}^\t(\K(H))$,
where $H$ denotes a complex infinite dimensional separable
Hilbert space, is an isomorphism of $\Z$-graded unital rings.
\end{theorem}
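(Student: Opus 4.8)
The plan is to prove that $\ep_*\colon K_*(\Kc)\to K_*^\t(\Kc)$ is a ring isomorphism by leveraging the graded-ring structure established in this section together with the degree-wise results already in hand. The target ring is completely understood: since $\i\colon\C\HRA\Kc$ induces an isomorphism $K_*^\t(\C)\simeq K_*^\t(\Kc)$ of graded rings, we have $K_*^\t(\Kc)\simeq\Z[u,u^{-1}]$ with $u\in K_2^\t(\Kc)$ the Bott element. So the whole problem reduces to understanding the source ring $K_*(\Kc)$ and showing $\ep_*$ matches it isomorphically with $\Z[u,u^{-1}]$.

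First I would record the raw input. By Theorem \ref{th:n<0}, $\ep_n$ is an isomorphism for every $n\leq0$; in particular $\ep_{-2}\colon K_{-2}(\Kc)\to K_{-2}^\t(\Kc)\simeq\Z$ is an isomorphism, so there is a class $u_{-2}\in K_{-2}(\Kc)$ mapping to the generator $u^{-1}$. The text has already observed that $\ep_{\pm2}$ is surjective, so one may pick $u_2\in K_2(\Kc)$ with $\ep_2(u_2)=u$. Because $\ep_*$ is a homomorphism of graded rings and $\ep_0=\id$ on $K_0(\Kc)\simeq\Z$, the product $u_{-2}u_2\in K_0(\Kc)\simeq\Z$ satisfies $\ep_0(u_{-2}u_2)=u^{-1}u=1$, whence $u_{-2}u_2=1$ in $K_0(\Kc)$. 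Thus $K_*(\Kc)$ contains a degree-$2$ invertible element $u_2$ with inverse $u_{-2}$.

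Next comes the periodicity argument, which is the heart of the proof. Multiplication by the invertible element $u_2$ gives, for every $n$, an isomorphism $K_n(\Kc)\xrightarrow{\sim}K_{n+2}(\Kc)$ with inverse given by multiplication by $u_{-2}$; this is \emph{algebraic} $2$-periodicity (Theorem \ref{th:PTC}). On the topological side, multiplication by $u$ realizes Bott periodicity $K_n^\t(\Kc)\simeq K_{n+2}^\t(\Kc)$. Since $\ep_*$ is a ring map sending $u_2\mapsto u$ and $u_{-2}\mapsto u^{-1}$, it intertwines these two periodicity isomorphisms; that is, the square relating $\ep_n$ and $\ep_{n+2}$ via the respective multiplication-by-$u_2$ and multiplication-by-$u$ maps commutes. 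Consequently $\ep_n$ is an isomorphism if and only if $\ep_{n+2}$ is. I would then bootstrap: $\ep_0=\id$ and $\ep_{-1},\ep_{-2}$ are isomorphisms by Theorem \ref{th:n<0}, so periodicity propagates isomorphism to every even $n$ (from $\ep_0$) and every odd $n$ (from $\ep_{-1}$), giving that $\ep_n$ is an isomorphism for all $n\in\Z$.

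Finally I would assemble the ring statement. The two preceding paragraphs show $\ep_*$ is a bijective graded-group homomorphism that is also multiplicative, hence an isomorphism of $\Z$-graded rings; since $\ep_0=\id$ carries the unit to the unit, it is unital. The main obstacle, conceptually, is the verification that the comparison map genuinely respects the multiplicative structure in every degree---this is exactly what the compatibility diagrams \eqref{comp mult} provide, but invoking them requires that $\Kc$ be $H$-unital over $\Q$, which holds because $\Kc$ is a $C^{\ast}$-algebra (indeed has a two-sided approximate identity bounded by $1$). Granting that, the argument is structurally clean: the entire weight rests on producing one invertible degree-$2$ element from the surjectivity of $\ep_2$ combined with Theorem \ref{th:n<0}, after which the multiplicative periodicity does all the work.
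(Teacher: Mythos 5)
Your proposal is correct and takes essentially the same route as the paper: obtain $u_2$ from the surjectivity of $\ep_2$ and $u_{-2}$ from Theorem \ref{th:n<0}, deduce $u_{-2}u_2=1$ in $K_0(\Kc)\simeq\Z$ using $\ep_0=\id$ and multiplicativity, and then propagate the isomorphism from degrees $\leq0$ to all of $\Z$ via the intertwined multiplication-by-$u_2$ and Bott periodicities. Your explicit appeals to the compatibility diagrams \eqref{comp mult} and to the $H$-unitality of $\Kc$ over $\Q$ merely spell out what the paper leaves implicit, and your parenthetical citation of Theorem \ref{th:PTC} is harmless since the periodicity you use follows directly from the invertibility of $u_2$ rather than from that theorem.
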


\medskip
The following result in essence was predicted by
the first author, and was proved by the second
\cite{Wodzicki.ECM}. It is an immediate
corollary of Theorem \ref{th:Kc comp} and
the fact that $u_2$ acts on
$K_{\ast}(R)$, for any $\Kc$-ring which is $H$-unital
as a $\Q$-algebra, via an isomorphism of degree 2.

\begin{theorem}[\cite{Wodzicki.ECM}]\label{th:PTC}
Let $R$ be any $\Kc$-ring which is $H$-unital as
a $\Q$-algebra. Then the algebraic
$K$-groups are periodic with period 2, the periodicity
isomorphism realized as multiplication by
$u_2\in K_2(\Kc)$.
\end{theorem}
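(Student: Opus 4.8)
The plan is to deduce periodicity directly from the ring computation of Theorem \ref{th:Kc comp} together with the module structure on $K_{\ast}(R)$ set up earlier in this section. By Theorem \ref{th:Kc comp} the comparison map identifies $K_{\ast}(\Kc)$ with $K_{\ast}^\t(\Kc)\simeq\Z[u,u^{-1}]$ as $\Z$-graded unital rings, so the degree-$2$ element $u_2$ is a \emph{unit} of the graded ring $K_{\ast}(\Kc)$, with inverse $u_{-2}\in K_{-2}(\Kc)$. Since $K_{\ast}(\Kc)$ is graded commutative and $u_2$ has even degree, we have $u_{-2}u_2=u_2u_{-2}=1=[p]$, the unit being the class $[p]$ of a rank $1$ projection that generates $K_0(\Kc)\simeq\Z$. (The inverse pair $u_{\pm2}$ was already produced in the discussion preceding Theorem \ref{th:Kc comp}.)

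Because $R$ is a $\Kc$-ring that is $H$-unital as a $\Q$-algebra, the earlier discussion equips $K_{\ast}(R)$ with the structure of a graded $\g_{\ast}$-associative module over $K_{\ast}(\Kc)$ in which $[p]$ acts through the automorphism $\th_{\ast}$. Writing $a\cdot w$ for the action, the two structural identities I would invoke are the twisted associativity
\[
a\cdot(b\cdot w)=\g_{\ast}\bigl((ab)\cdot w\bigr)\qquad(a,b\in K_{\ast}(\Kc),\ w\in K_{\ast}(R))
\]
and the action of the unit, $[p]\cdot w=\th_{\ast}(w)$.

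I would then argue that multiplication by $u_2$ is the asserted periodicity isomorphism. Let $m_{\pm2}$ denote the degree-$\pm2$ maps $w\mapsto u_{\pm2}\cdot w$. Feeding $u_{-2}u_2=u_2u_{-2}=[p]$ into the two identities above yields
\[
m_{-2}\c m_2=\g_{\ast}\th_{\ast}\quad\text{on }K_n(R),\qquad m_2\c m_{-2}=\g_{\ast}\th_{\ast}\quad\text{on }K_{n+2}(R).
\]
As $\g_{\ast}$ and $\th_{\ast}$ are automorphisms of $K_{\ast}(R)$, their composite $\g_{\ast}\th_{\ast}$ is an automorphism; hence $m_2\Cl K_n(R)\to K_{n+2}(R)$ admits the left inverse $(\g_{\ast}\th_{\ast})^{-1}\c m_{-2}$ and the right inverse $m_{-2}\c(\g_{\ast}\th_{\ast})^{-1}$, and is therefore an isomorphism for every $n$. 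This is precisely $2$-periodicity, realized by multiplication by $u_2$.

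Since the whole argument is formal once Theorem \ref{th:Kc comp} supplies the invertibility of $u_2$, I do not expect a genuine obstacle; the only points demanding care are the bookkeeping of the twists---verifying that $m_{-2}\c m_2$ collapses to the single automorphism $\g_{\ast}\th_{\ast}$ rather than to an uncontrolled endomorphism---and the observation that for the standard examples $R=\K\T_k A$, $\K\pT A$, and $\K\xT A$ one has $\g_{\ast}=\th_{\ast}=\id$, so that there the periodicity isomorphism is literally multiplication by $u_2$ with no twist.
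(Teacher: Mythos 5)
Your proof is correct and is essentially the paper's own argument: the paper derives Theorem \ref{th:PTC} as an immediate corollary of Theorem \ref{th:Kc comp} together with the statement that $u_2$ acts on $K_{\ast}(R)$ via an isomorphism of degree $2$, which is exactly what your computation $m_{-2}\c m_2=m_2\c m_{-2}=\g_{\ast}\th_{\ast}$ (an automorphism) establishes. Your explicit bookkeeping of the twists, and the remark that $\g_{\ast}=\th_{\ast}=\id$ for $R=\K\T_k A$, $\K\pT A$, $\K\xT A$, merely fills in details the paper leaves implicit (it records these same facts in the discussion of $\K$-ring module structures in Section \ref{s:Kst C}).
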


Let us reiterate that besides the rings
of the form $R=\Kc\T_k A$ which are $H$-unital
over $\Q$, the above periodicity theorem
applies to any Banach $\Kc$-ring, e.g.,
to $R=\Kc\pT A$, where $A$ is any real or complex Banach
algebra with bounded one-sided approximate identity,
and to $\Kc\xT A$ where $A$ is any $C^{\ast}$-algebra.

For a Banach $\Kc$-ring,
the comparison map $\ep_{\ast}\Cl K_{\ast}(R)\to K_{\ast}^\t(R)$
is a homomorphism of graded $\Z[u,u^{-1}]$-modules
which, in view of Theorem \ref{th:n<0},
is an isomorphism in degrees $\leq0$.
This leads to the following result.

\begin{theorem}\label{th:KCst} \ 
Any stable retract of a Banach $\Kc$-ring $R$ that
is $H$-unital over $\Q$ is $K_n$-stable for every $n\in\Z$.
In other words, the comparison map
$\ep_{\ast}\Cl K_{\ast}(R)\to K_{\ast}^\t(R)$
is an isomorphism of $\Z$-graded groups.
\end{theorem}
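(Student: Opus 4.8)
The plan is to propagate the isomorphism $\ep_n$ from the range $n\leq0$, where it is already available, to all degrees, by exploiting that both $K_{\ast}(R)$ and $K_{\ast}^\t(R)$ are $2$-periodic and that $\ep_{\ast}$ intertwines the two periodicity operators. As recalled in the paragraph preceding the statement, a Banach $\Kc$-ring is $K_n$-stable for $n\leq0$ by Theorem \ref{th:n<0}, so $\ep_n\Cl K_n(R)\to K_n^\t(R)$ is an isomorphism for every $n\leq0$; in particular both $\ep_0$ and $\ep_{-1}$ are isomorphisms. It therefore suffices to show that $\ep_n$ being an isomorphism forces $\ep_{n+2}$ to be one, and then to induct separately in each parity.

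First I would record the two periodicities. Since $R$ is a $\Kc$-ring that is $H$-unital over $\Q$, Theorem \ref{th:PTC} provides a periodicity isomorphism $K_n(R)\xrightarrow{\ \sim\ }K_{n+2}(R)$ realized as multiplication by $u_2\in K_2(\Kc)$. On the topological side, Bott periodicity furnishes an isomorphism $K_n^\t(R)\xrightarrow{\ \sim\ }K_{n+2}^\t(R)$ realized as multiplication by the generator $u\in K_2^\t(\C)$. The decisive compatibility is that $\ep_{\ast}$ is a homomorphism of graded $\Z[u,u^{-1}]$-modules and that, by Theorem \ref{th:Kc comp}, the comparison map restricts on $K_{\ast}(\Kc)$ to a ring isomorphism onto $K_{\ast}^\t(\Kc)$; hence $\ep_2(u_2)$ is a generator of $K_2^\t(\Kc)\simeq\Z$ acting on $K_{\ast}^\t(R)$ exactly as $u$. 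This yields, for every $n\in\Z$ and every $x\in K_n(R)$, the intertwining relation
\[
\ep_{n+2}(u_2\,x)=u\,\ep_n(x),
\]
that is, a commuting ladder whose vertical maps are the $\ep_n$ and whose horizontal maps (multiplication by $u_2$ and by $u$) are isomorphisms. Verifying this intertwining is the main point of the argument; it is where Theorem \ref{th:Kc comp} and the compatibility of the multiplicative structures with $\ep_{\ast}$ genuinely enter, and everything else is formal.

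Granting the relation, in each commuting square the two horizontal arrows are isomorphisms, so $\ep_n$ is an isomorphism if and only if $\ep_{n+2}$ is. Starting the induction from $\ep_0$ gives $\ep_n$ an isomorphism for all even $n\geq0$, and starting from $\ep_{-1}$ gives it for all odd $n\geq1$; combined with the already known range $n\leq0$ this covers every $n\in\Z$. Thus $R$ itself is $K_n$-stable for all $n$, which is precisely the ``in other words'' assertion that $\ep_{\ast}\Cl K_{\ast}(R)\to K_{\ast}^\t(R)$ is an isomorphism of $\Z$-graded groups. Finally, since a stable retract of a $K_n$-stable algebra is automatically $K_n$-stable, any stable retract of $R$ is $K_n$-stable for every $n\in\Z$, completing the proof.
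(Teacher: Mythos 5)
Your proposal is correct and takes essentially the same route as the paper: the paper likewise notes that $\ep_{\ast}\Cl K_{\ast}(R)\to K_{\ast}^\t(R)$ is a homomorphism of graded $\Z[u,u^{-1}]$-modules (via Theorem \ref{th:Kc comp} and the compatibility of the multiplicative structures with the comparison map), that it is an isomorphism in degrees $\leq0$ by Theorem \ref{th:n<0}, and that the invertible degree-2 element $u_2$ of Theorem \ref{th:PTC} propagates the isomorphism to all degrees, with stable retracts inheriting $K_n$-stability. Your explicit intertwining ladder $\ep_{n+2}(u_2x)=u\,\ep_n(x)$ simply spells out what the paper leaves implicit in the phrase ``homomorphism of graded $\Z[u,u^{-1}]$-modules.''
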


\begin{remark}Theorem \ref{th:KCst} contains as special cases
various formulations of the so-called Karoubi’s Conjecture:
the $C^{\ast}$-algebra form proved in
\cite[Theorem 10.9]{Suslin-Wodzicki}
and the Banach algebra form proved in \cite{Wodzicki.ECM}.
The proof in \cite{Suslin-Wodzicki} does not exploit
multiplicative structures on $\Z$-graded $K$-groups.
Instead, it utilizes the characterization of homotopy invariant
functors from the category of $C^{\ast}$-algebras to the category
of abelian groups due to Higson \cite{Higson}, whose work
extended earlier results and techniques of Kasparov and Cuntz.
(In recent years that was further extended to functors of
bornological algebras \cite[Section 3.2]{CMR}.)
It also invokes the fact that any complex stable $C^{\ast}$-algebra
is $K_1$-stable (the proof of that statement is elementary).
\end{remark}

\bigskip
\section{$K$-theory of $\Kr$-rings}\label{s:Kst R}

\smallskip
Let $H$ denote an arbitrary \emph{real} Hilbert space.
The diagonal embedding
\BE{D2}
\D_2\Cl\K(H)\LHRA\K(H\ds H),\qquad c\LMT\BM{cc}c&0\\0&c\EM,
\EE
factorizes
\BE{D=rc}
\BD\dgARROWLENGTH=0.7em\dgARROWPARTS=8
\node{\K(H)}\arrow[2]{e,t,V,5}{\D_2}\arrow{sse,b,V}{\g}
 \node[2]{\K(H\ds H)}\\
 \\
\node[2]{\K(H\T_\R\C)}\arrow{nne,b,V}{\rho}
\ED\;,
\EE
where $\g$ is the complexification map, $c\mapsto c\T_\R\id_\C$,
while $\rho$ is the inclusion $\Kc(H\T_\R\C)\HRA\Kr(H\T_\R\C)$.
Out of these three ring homomorphisms only $\g$ is also
a homomorphism of the corresponding $\mu$-multiplications,
hence it induces a homomorphism of graded $K$-theory rings, the other
two induce homomorphisms of $K_{\ast}(\K(H))$-modules in algebraic
$K$-theory, and of $K_{\ast}^\t(\K(H))$-modules---in
topological $K$-theory.

The homomorphism defined in \eqref{D2}
induces multiplication by 2
in algebraic and in topological $K$-theory alike.
Since $\ep_{\ast}\Cl K_{\ast}(\K(H\T_\R\C))\to K_{\ast}^\t(\K(H\T_\R\C))$
is an isomorphism in view of Theorem \ref{th:Kc comp},
the kernel and the cokernel of the comparison map
\BE{epR}
\ep_{\ast}\Cl K_{\ast}(\K(H))\to K_{\ast}^\t(\K(H))
\EE
are abelian groups of exponent 2.

\begin{theorem}\label{th:Kr comp}
The comparison map $\ep_{\ast}\Cl K_{\ast}(\K(H))\to K_{\ast}^\t(\K(H))$,
where $H$ denotes a real infinite dimensional separable
Hilbert space, is an isomorphism of $\Z$-graded unital rings.
\end{theorem}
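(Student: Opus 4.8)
The plan is to reduce everything to the construction of a single element $v_8\in K_8(\Kr)$ that lifts a generator of $K_8^\t(\Kr)\simeq\Z$, and then to propagate the comparison isomorphism from the range $n\leq0$ (already available from Section \ref{s:CF}) to all degrees using $8$-periodicity. Throughout I would use that $\Kr$, being a $C^\ast$-algebra, is $H$-unital over $\Q$, so that $K_{\ast}(\Kr)$ is a graded ring, $\ep_{\ast}$ is a homomorphism of graded rings compatible with the multiplicative structures as in \eqref{comp mult}, and $\ep_0$ is the identity. I would also take as given that $\ep_n$ is an isomorphism for all $n\leq0$ and that the kernel and cokernel of $\ep_{\ast}$ have exponent $2$, the latter being exactly the consequence of the factorization \eqref{D=rc} recorded just above.

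Granting such a $v_8$, the propagation step is formal. Since $K_{-8}(\Kr)\simeq K_{-8}^\t(\Kr)\simeq\Z$ with $\ep_{-8}$ an isomorphism, I would pick a generator $v_{-8}\in K_{-8}(\Kr)$. Because $\ep_{\ast}$ is a graded ring homomorphism with $\ep_0=\id$, the element $v_8v_{-8}\in K_0(\Kr)\simeq\Z$ is sent to the product of a generator of $K_8^\t(\Kr)$ (the real Bott class) and a generator of $K_{-8}^\t(\Kr)$, hence to a unit; therefore $v_8v_{-8}=\pm1$ and $v_8$ is invertible. Multiplication by $v_8$ then gives isomorphisms $K_n(\Kr)\simeq K_{n+8}(\Kr)$, compatible via $\ep_{\ast}$ with multiplication by $\ep_8(v_8)$, which is invertible in $K_{\ast}^\t(\Kr)$ and so furnishes the topological $8$-periodicity $K_n^\t(\Kr)\simeq K_{n+8}^\t(\Kr)$. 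The resulting commutative squares show that $\ep_n$ is an isomorphism if and only if $\ep_{n+8}$ is; since $\{-7,-6,\dots,0\}$ is a complete set of residues modulo $8$ on which $\ep_n$ is already an isomorphism, $\ep_n$ would be an isomorphism in every degree, and being a ring homomorphism it would then be an isomorphism of $\Z$-graded unital rings.

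The hard part will be producing $v_8$ itself. The exponent-$2$ estimate from \eqref{D=rc} is not enough: if $w$ generates $K_8(\Kc)\simeq\Z$, then $\ep_8(\rho(w))$ equals twice a generator of $K_8^\t(\Kr)$, because $\rho$ induces realification on topological $K$-theory and realification $K_8^\t(\Kc)\to K_8^\t(\Kr)$ is multiplication by $2$. Thus this route only shows that the image of $\ep_8$ contains $2\Z$, whereas I need it to contain a generator. To close the factor-of-two gap I would pass to $K$-theory with coefficients modulo $16$ and bring in classical stable homotopy theory (in the spirit of Adams' work on the image of $J$ and the $e$-invariant) in order to exhibit an integral class in $K_8(\Kr)$ whose comparison image is not divisible by $2$. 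This homotopy-theoretic detection of the generator is the genuine obstacle; everything else is formal once it is in hand.
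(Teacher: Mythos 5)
Your formal skeleton coincides with the paper's: reduce everything to producing a single element $v_8\in K_8(\Kr)$ with $\ep_8(v_8)=v$, deduce invertibility from $v_8v_{-8}=\pm1$ (using $\ep_0=\id$ and that $\ep_{-8}$ is an isomorphism), and propagate the comparison isomorphism from degrees $n\leq0$ through all residues mod $8$ by multiplying with the invertible element $v_8$. That propagation is correct, and your diagnosis of the factor-of-two obstruction --- $\rho_{\ast}^\t(u^4)=2v$, so the factorization \eqref{D=rc} only puts $2\Z$ into the image of $\ep_8$ --- is exactly the paper's point of departure.

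However, there is a genuine gap precisely where you write that ``the hard part will be producing $v_8$ itself'': that construction is the entire content of the paper's proof, and you defer it rather than carry it out. Concretely, the paper's mechanism is a divisibility argument with coefficients $\Z/2^l\Z$. In the Bockstein exact sequence $K_8(\Kr)\xrightarrow{\x2^l}K_8(\Kr)\xrightarrow{r}K_8(\Kr;\Z/2^l\Z)$ one computes, using Palais' homotopy equivalence \cite{Palais} and the already-established complex comparison theorem, that $\bigl(\bep\c r\bigr)\bigl(2^{l-1}\rho(u_2^4)\bigr)=r^\t\bigl(2^lv\bigr)=0$; then, \emph{provided the finite-coefficient comparison map $\bep$ is injective}, exactness forces $2^{l-1}\rho\bigl(u_2^4\bigr)=2^lv_8$ for some $v_8\in K_8(\Kr)$, and torsion-freeness of $K_8^\t(\Kr)\simeq\Z$ yields $\ep_\R(v_8)=v$. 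The input your proposal never identifies is this injectivity of $\bep$ for at least one $l$. The paper secures it either by citing Suslin's theorem on $K_{\ast}(\R;\Z/d\Z)$ \cite{Suslin}, or by a self-contained lemma: Browder's element of order $16$ in $\pi_8\bigl(\Om^\infty S^\infty;\Zsixt\bigr)$ \cite{Browder} shows that $\bep$ mod $16$ is surjective in degree $8$, and the Araki--Toda/Oka quasi-associative product on mod-$16$ $K$-groups \cite{Araki-Toda}, \cite{Oka} --- associativity holding when one factor is an integral reduction, which applies because $\bv_{-8}=r(v_{-8})$ --- upgrades this surjectivity to an isomorphism of graded modules, hence injectivity. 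Your appeal to ``Adams' image of $J$ and the $e$-invariant'' points in the right general direction but is not an argument: it explains neither how topological non-divisibility is transferred to an integral algebraic class, nor what finite-coefficient rigidity statement the transfer rests on; and it ignores the delicacy that mod-$d$ products are in general only quasi-associative (no unital multiplication on the Moore spectrum for $d\equiv2$ mod $4$, and associativity failing for $d\equiv\pm3$ mod $9$), which is why the paper works mod $16$ and with ``limited associativity''. Without this detection step the proof is incomplete at its critical point.
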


By Theorem \ref{th:n<0}, the comparison map
is an isomorphism in degrees $n\leq0$.
It remains to find an invertible element
of degree not equal 0 in the ring
$K_{\ast}(\K(H))$.  Such an element is sent
by \eqref{epR} to an invertible
element in $K_{\ast}^\t(\K(H))$. Recalling that
inclusion of a one-dimensional subspace into $H$
induces an inclusion of $\R$ into $\K(H)$,
cf.~\eqref{F->K}, we turn our attention
to the ring structure of $K_{\ast}^\t(\R)$,
\BE{KR}
K_{\ast}^\t(\R)\;\simeq\;
 \Z[\eta,w,v,v^{-1}]/(\eta^3,2\eta,\eta w,w^2-4v)\,.
\EE
Here $\eta$, $w$ and $v$ have degrees 1, 4
and, respectively, 8 \cite[p.\,157]{MKbook}.
In particular, $\{\pm v^n\mid n\in\Z\}$ is its
group of invertible elements; the degrees of
such elements are multiples of 8.

For a one-dimensional Hilbert space, diagram
\eqref{D=rc} becomes
\BE{D=rc1}
\BD\dgARROWPARTS=8
\node{\R}\arrow[2]{e,t,V,5}{\D_2}\arrow{se,b,V}{\g}
 \node[2]{M_2(\R)}\\
\node[2]{\C}\arrow{ne,b,V}{\rho}
\ED\;,
\EE
where $\g$ is the inclusion map, and $\rho$ sends $a+ib$
to $\BM{rr}a&-b\\b&a\EM$.

The induced ring homomorphism
$\g_{\ast}^\t\Cl K_{\ast}^\t(\R)\to K_{\ast}^\t(\C)$
sends $w$ to $2u^2$ and $v$ to $u^4$,
while
\[
\rho_{\ast}^\t(u^{4l+m})=\begin{cases}
     2v^l & \quad\text{if}\quad m=0\\
 \eta^2v^l & \quad\text{if}\quad m=1\\
    w v^l & \quad\text{if}\quad m=2\\
    0     & \quad\text{if}\quad m=3\end{cases}
\]
cf.\ \cite[pp.\,157--158]{MKbook}.

\smallskip
Let us consider the algebraic and topological $K$-groups
of $\Kr=\K(H)$ with coefficients $\Z/2^l\Z$,
cf. \cite{Araki-Toda}, \cite{Browder}, \cite{Neisendorfer}.
They fit into the commutative diagram
\[
\BD\dgARROWLENGTH=1.0\dgARROWLENGTH
\node{K_8\bigl(\Kr;\Z/2^l\Z\bigr)} \arrow[2]{s,l}{\bep}
 \node{K_8(\Kr)}\arrow{w,t}{r}\arrow[2]{s,l}{\ep_\R}
   \node[2]{K_8(\Kr)} \arrow[2]{w,t}{\x2^l} \arrow[2]{s,l}{\ep_\R} \arrow{sw,b}{\g}
\\
\node[3]{K_8(\Kc)} \arrow{nw,b}{2^{l-1}\rho} \arrow[2]{s,r,1}{\ep_\C}
\\
\node{K_8^\t(\Kr;\Z/2^l\Z)}
 \node{K_8^\t(\Kr)}\arrow{w,t}{r^\t}
   \node{}\arrow{w,t}{\x2^l}
      \node{K_8^\t(\Kr)} \arrow{w,,-} \arrow{sw,b}{\g^\t}
\\
\node[3]{K_8^\t(\Kc)} \arrow{nw,b}{2^{l-1}\rho^\t}
\ED
\]
whose rows are portions of the long exact sequence relating
the homotopy groups of a space to the homotopy groups
with coefficients $\Z/2^l\Z$.
The \emph{reduction} mod $2^l$ maps, denoted $r$,
are induced by the \emph{pinching} maps
$P^n\bigl(2^l\bigr)\to S^n$
from \emph{Peterson} spaces
\[
P^n(d)=S^{n-2}\w P^2(d),\qquad(n\geq2),
\]
which are the cofibers of maps between spheres
$S^{n-1}\to S^{n-1}$ of degree $d$.
The vertical arrows are the appropriate comparison maps.

The comparison map for $\Kc=\K(H\T_\R\C)$, denoted $\ep_\C$,
is an isomorphism by Theorem \ref{th:KCst} and it sends element
$u_2^4\in K_8(\Kc)$ to $u^4\in K_8^\t(\Kc)$.
In view of
\[
\GL_\infty(\R)\HRA(1+\K(H))^{\ast}
 \AND
\GL_\infty(\C)\HRA(1+\K(H\T_\R\C))^{\ast}
\]
being homotopy equivalences \cite{Palais},
the vertical arrows in the commutative diagram
\[
\BD
\node{K_{\ast}^\t(\C)} \arrow{e,t}{\rho^\t} \arrow{s,lr}\i\simeq
   \node{K_{\ast}^\t(\R)} \arrow{s,lr}\i\simeq \\
\node{K_{\ast}^\t(\Kc))} \arrow{e,t}{\rho^\t}
      \node{K_{\ast}^\t(\Kr)}
\ED
\]
are isomorphisms and we infer that
\[
\LL(\bep\c r\c2^{l-1}\rho\RR)\bigl(u_2^4\bigr)
 = \LL(r^\t\c2^{l-1}\rho^\t\RR)\bigl(u^4\bigr)
  = r^\t\bigl(2^lv\bigr) = 0.
\]

If $\bep$ is injective, then exactness
of the top row implies that
\[
2^{l-1}\rho\bigl(u_2^4\bigr)=2^lv_8
\]
for some element $v_8\in K_8(\Kr)$.
Accordingly,
\[
2^lv=2^{l-1}\rho^\t\bigl(u^4\bigr)
 =\ep_\C\bigl(u_2^4\bigr)=2^l\ep_\R(v_8)
\]
in $K_8^\t(\Kr)\simeq\Z$. As the latter group
is torsion free, it follows that $v=\ep_\R(v_8)$,
i.e.,\ the comparison map $K_8(\Kr)\to K_8^\t(\Kr)$
is surjective. The $\Z$-graded comparison map
\eqref{epR}, being a unital ring homomorphism
and an isomorphism in degrees less than or equal 0,
is then seen to be an isomorphism
in all degrees, with multiplication by $v_8$
providing 8-periodicity isomorphisms
$K_n(\Kr)\simeq K_{n+8}(\Kr)$.

The proof of Theorem \ref{th:Kr comp} will be complete
if we show that $\bep$ is injective for at least
one $l>0$. This follows, for example, from a celebrated
result of Suslin about $K$-theory of $\R$ and $\C$
with finite coefficients. According to
\cite[Corollary 4.8]{Suslin}, one has
\[
\bep_n\Cl K_n(\R;\Z/d\Z)\xrightarrow{\quad\sim\quad} K_n^\t(\R;\Z/d\Z)
 \qquad(n>0,\,d>1).
\]
Below we offer a proof that
does not rely on Suslin's Theorem. 

\begin{lemma}
The $\Z$-graded comparison map
\BE{ep16}
\bep_{\ast}\Cl K_{\ast}(\Kr;\Zsixt) \LA K_{\ast}^\t(\Kr;\Zsixt)
\EE
is an isomorphism.
\end{lemma}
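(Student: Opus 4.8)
I would upgrade the degrees-$\leq0$ comparison theorem to all degrees by manufacturing an $8$-fold periodicity on the algebraic side mod $16$ that matches Bott periodicity on the topological side. Throughout I use that, with $\Zsixt$ coefficients, both $K_{\ast}(\Kr;\Zsixt)$ and $K_{\ast}^\t(\Kr;\Zsixt)$ carry graded ring structures for which $\bep_{\ast}$ is a ring homomorphism: this is the finite-coefficient version of the multiplicative formalism of Section \ref{s:Kst C}, and it is available precisely because the modulus $16$ is large enough that $K$-groups with coefficients admit associative, graded-commutative products (the reason we work mod $16$ rather than mod $2$ or mod $4$).

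First I would record that $\bep_n$ is an isomorphism for $n\leq0$. By Theorem \ref{th:n<0} the integral comparison map $\ep_n\Cl K_n(\Kr)\to K_n^\t(\Kr)$ is an isomorphism for every $n\leq0$; comparing the two universal-coefficient sequences in degree $n$ and applying the five lemma (both $\ep_n$ and $\ep_{n-1}$ are isomorphisms when $n\leq0$) shows $\bep_n$ is an isomorphism for $n\leq0$. In particular $\bep_0$ is a ring isomorphism onto $K_0^\t(\Kr;\Zsixt)\simeq\Zsixt$.

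Next I would set up the periodicity. On the topological side, $v\in K_8^\t(\R)\simeq K_8^\t(\Kr)$ is a unit of $K_{\ast}^\t(\Kr)$ by \eqref{KR}, so its reduction is a unit of $K_{\ast}^\t(\Kr;\Zsixt)$ and multiplication by it raises degree by $8$ isomorphically. The heart of the matter is to produce a class $\tilde v\in K_8(\Kr;\Zsixt)$ with $\bep(\tilde v)$ a generator of $K_8^\t(\Kr;\Zsixt)\simeq\Zsixt$. Realification is \emph{not} enough: the computation $\rho_{\ast}^\t(u^4)=2v$ shows that $\rho(u_2^4)$ only reaches $2v$, an even multiple, and the missing factor of $2$ is exactly the exponent-$2$ discrepancy between the real algebraic and topological groups. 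To bridge it I would invoke stable homotopy theory: the $2$-primary part of the image of the $J$-homomorphism, $\operatorname{im}J\subset\pi_7^s$, is cyclic of order $16$, and its generator $\theta$ has $KO$-theoretic $e$-invariant of $2$-primary order $16$. Since $16\theta=0$ ($2$-locally), $\theta$ lifts across the mod-$16$ Bockstein to a class $\tilde\theta\in\pi_8(\mathbf S;\Zsixt)$, and the unit map of the ring spectrum $K(\R)$ followed by $\i_{\ast}$ sends $\tilde\theta$ to the desired $\tilde v\in K_8(\Kr;\Zsixt)$; naturality together with the ring isomorphism $K_{\ast}^\t(\R)\simeq K_{\ast}^\t(\Kr)$ identifies $\bep(\tilde v)$ with the $e$-invariant image of $\theta$, i.e.\ a generator of $\Zsixt$. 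I expect this to be the main obstacle, both because it is where the genuinely nontrivial input enters (Adams' computation of $\operatorname{im}J$, equivalently the existence of a self-map of the mod-$16$ Moore spectrum inducing a $K$-theory isomorphism), and because one must verify that the relevant unit maps are multiplicative and compatible with $\bep$.

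Finally I would show $\tilde v$ is invertible and conclude. By Theorem \ref{th:n<0} the group $K_{-8}(\Kr)\simeq\Z$ is generated by a class $v_{-8}$ with $\ep_{-8}(v_{-8})=v^{-1}$; let $\bar v_{-8}$ denote its reduction mod $16$. Then $\tilde v\cdot\bar v_{-8}\in K_0(\Kr;\Zsixt)$ satisfies $\bep_0(\tilde v\cdot\bar v_{-8})=\bep(\tilde v)\,v^{-1}$, a unit of $\Zsixt$; since $\bep_0$ is a ring isomorphism, $\tilde v\cdot\bar v_{-8}$ is a unit, hence $\tilde v$ is invertible in $K_{\ast}(\Kr;\Zsixt)$. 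Therefore multiplication by $\tilde v$ is an isomorphism $K_n(\Kr;\Zsixt)\to K_{n+8}(\Kr;\Zsixt)$ for every $n$, and it sits in a commuting square with the topological periodicity isomorphism, both horizontal maps being isomorphisms. Consequently $\bep_n$ is an isomorphism if and only if $\bep_{n+8}$ is; combined with the isomorphism for $n\leq0$, this propagates to all degrees and proves the Lemma.
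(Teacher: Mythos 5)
Your proposal is correct and is essentially the paper's own proof: the paper likewise obtains surjectivity of $\bep_8$ from an element of order 16 in $\pi_8\LL(\Om^\infty S^\infty;\Zsixt\RR)$ hitting the generator of $K_8^\t(\R;\Zsixt)$ (citing \cite[Theorem 4.1]{Browder} for exactly the image-of-$J$/$e$-invariant input you reconstruct from Adams via the Bockstein lift), transports it to $\Kr$ by Palais' theorem, and then inverts the resulting class $\bv_8$ against the reduction $\bv_{-8}=r(v_{-8})$ of the integral class using the isomorphism in degrees $\leq0$, concluding by 8-periodicity. The only place you overreach is the opening assertion that the mod-16 products are fully associative and graded-commutative: the paper deliberately avoids this (associativity and commutativity of Moore-spectrum multiplications being delicate, cf.\ its discussion of Araki--Toda and Oka) and uses only \emph{quasi-associativity} --- triple products associate when at least one factor is an integral reduction --- which in fact suffices for every product you form, since $\bv_{-8}$ and all classes in $K_0(\Kr;\Zsixt)$ are integral reductions, so your argument goes through verbatim under the weaker hypothesis.
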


\begin{proof}
There exists an element of order 16 in
$\pi_8\LL(\Om^\infty S^\infty;\Zsixt\RR)$
which under the sequence of obvious maps
\[
\Om^\infty S^\infty\sim \textup{B}\Sigma_\infty^+ \LA \BGL_\infty(\Z)^+
 \LA \BGL_\infty(\R)^+ \LA \BGL_\infty^\t(\R)
\]
is sent to the generator of $K_8^\t(\R;\Zsixt)\simeq\Zsixt$,
cf.\ e.g.,\ \cite[Theorem 4.1]{Browder}. It follows that
$\bepR$, the left comparison map in the commutative diagram
\[
\BD
\node{K_8(\R;\Zsixt)}\arrow{e,t}\i\arrow{s,r,A}{\bepR}
 \node{K_8(\Kr;\Zsixt)}\arrow{s,r}{\bepK}\\
\node{K_8^\t(\R;\Zsixt)}\arrow{e,tb}\simeq\i
 \node{K_8^\t(\Kr;\Zsixt)}
\ED\;,
\]
is surjective. Since the bottom arrow is an isomorphism by
the result of Palais mentioned above \cite{Palais}, also $\bepK$
is surjective.  Let us denote by $\bv_8$ any element of
$K_8(\Kr;\Zsixt)$ which is mapped by $\bepK$
to the generator of $K_8^\t(\Kr;\Zsixt)$.

For any $d\neq2\mod4$, there exists a coproduct map
$P^4(d)\to P^2(d)\w P^2(d)$ which, by using suspension,
generates a sequence of compatible coproduct maps
on Peterson spaces
\BE{coprod}
P^{m+n}(d) \LA P^m(d)\w P^n(d)\qquad(m,n\geq2).
\EE
The coproduct maps are known to be coassociative for
$d$ \emph{odd} and $d\neq\pm3\mod9$. This fact is relatively
easy to see when $d$ is not divisible by 3. The case 
when $9\vert d$ is significantly more delicate, cf.,
for example, \cite[Theorem 25.1]{Neisendorfer}.
The multiplications on the Moore spectrum $M_d$ 
which can be considered stable Spanier-Whitehead
duals of maps \eqref{coprod}, were studied by Araki–Toda
\cite{Araki-Toda} and Oka \cite{Oka}.
The unital multiplications exist if and only if $d\neq2$ mod 4
and, according to \cite[Theorem 2(d)]{Oka},
they are associative if and only if $d\neq\pm3$ mod 9.  

Leaving aside the question as to the exact extent
of coassociativity of the coproducts in \eqref{coprod},
it is easy to see that the coproducts exhibit what we would
like to call \emph{limited coassociativity}.
In the context of the corresponding product structures
on $K_{\ast}(\ ;\Z/d\Z)$, this means that
\[
(\a\b)\g=\a(\b\g)
\]
if at least one of the three elements is the reduction mod $d$
of an element in the \emph{integral} $K$-group, i.e.,\ belongs to
the image of the reduction map
\BE{red}
r\Cl K_{\ast}(\ )\LA K_{\ast}(\ ;\Z/d\Z).
\EE
This has been known already to Araki and Toda \cite{Araki-Toda}
who refer to it as \emph{quasi-associativity}.
As we shall see, this is sufficient for our purposes.

The comparison mod 16 map \eqref{ep16} is a homomorphism of $\Z$-graded
binary rings, as are the reduction mod 16 maps.
If we denote by $\bv_{-8}$ the reduction of
$v_{-8}\in K_{-8}(\Kr)$, and by $\bv$ the reduction of $v\in K_8^\t(\Kr)$,
then
\[
\bep_0(\bv_8\bv_{-8})=\bep_8(\bv_8)\bep_{-8}(\bv_{-8})=
 \bv\bv^{-1}=r(vv^{-1})=r(1)=1
\]
in $K_0^\t(\Kr;\Zsixt)\simeq\Zsixt$.
Since $\bep_n$ is an isomorphism for $n\leq0$, one has
$\bv_{-8}=\bv_8^{-1}$.
Limited associativity of the product structure
in $K_{\ast}(\Kr;\Zsixt)$
combined with the fact that $\bv_{-8}=r(v_{-8})$,
implies that multiplication by $\bv_{-8}$, viewed
as an operation on the $\Z$-graded abelian group
$K_{\ast}(\Kr;\Zsixt)$, is the inverse to the operation
of multiplication by $\bv_8$. In other words,
\eqref{ep16} is an isomorphism of graded modules
over the ring of Laurent polynomials in one variable
of degree 8.
\end{proof}

\begin{remark}
Note that Theorem \ref{th:Kr comp} implies that
$\bv_8$ is the reduction mod 16 of a generator of
$K_8(\Kr)\simeq K_8^\t(\Kr)\simeq\Z$, unlike
the element in $K_8(\R;\Zsixt)$
which allowed us to prove the existence
of $\bv_8$: according to
\cite[Theorem 4.9]{Suslin}, $K_8(\R)$ is a
uniquely divisible group.
\end{remark}

The following two results are now consequences of Theorems
\ref{th:Kr comp} and \ref{th:n<0} precisely in the same way
as Theorems \ref{th:PTC} and \ref{th:KCst} are consequences
of Theorem \ref{th:Kc comp} and Theorem \ref{th:n<0}.

\begin{theorem}\label{th:PTR}
Let $R$ be any $\Kr$-ring which is $H$-unital as
a $\Q$-algebra. Then the algebraic
$K$-groups are periodic with period 8, the periodicity
isomorphism realized as multiplication by
$v_8\in K_8(\Kr)$.
\end{theorem}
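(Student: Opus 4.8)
The plan is to mirror exactly the derivation of Theorem \ref{th:PTC} from Theorem \ref{th:Kc comp}, now using Theorem \ref{th:Kr comp} in place of its complex analog. The two ingredients are already in place: by Theorem \ref{th:Kr comp} the comparison map identifies $K_{\ast}(\Kr)$ with $K_{\ast}^\t(\Kr)$ as $\Z$-graded rings, and the latter contains the invertible element $v$ of degree $8$ displayed in \eqref{KR}. Hence $K_{\ast}(\Kr)$ contains an invertible element $v_8\in K_8(\Kr)$ of degree $8$, namely the unique preimage under $\ep_{\ast}$ of $v$, with multiplicative inverse $v_{-8}=v_8^{-1}\in K_{-8}(\Kr)$.

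The second ingredient is the module structure established in Section \ref{s:Kst C}: for any $\K$-ring $R$ that is $H$-unital as a $\Q$-algebra, the graded group $K_{\ast}(R)$ is a $\g_{\ast}$-associative module over $K_{\ast}(\K)$. Applying this with $\K=\Kr$, multiplication by $v_8$ furnishes a graded map $K_n(R)\to K_{n+8}(R)$ for every $n\in\Z$, and multiplication by $v_{-8}$ a map in the opposite direction. First I would verify that these two operations are mutually inverse. Since $v_{-8}v_8=1=v_8v_{-8}$ in the ring $K_{\ast}(\Kr)$, and since the action is unitary with $[p]$ acting as the identity on $K_{\ast}(R)$ (as recorded at the end of Section \ref{s:Kst C}), the composite of the two multiplications is multiplication by the unit, hence the identity. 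This gives the periodicity isomorphism $K_n(R)\simeq K_{n+8}(R)$ realized by $v_8$.

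The one point requiring care---and the place where the argument is not purely formal---is the $\g_{\ast}$-associativity constraint. The module is only associative up to the automorphism $\g_{\ast}$, so one must check that $(v_{-8}v_8)w$ and $v_{-8}(v_8 w)$ agree as operations on $w\in K_{\ast}(R)$; a priori they differ by a twist. The remedy is the observation from Section \ref{s:Kst C} that for rings of the form $\Kr\T_k A$, and for Banach $\Kr$-rings $\Kr\pT A$ and $\Kr\xT A$, the constraints $\g$ and $\th$ can be realized as inner automorphisms of unital overrings, so that $\g_{\ast}=\id$ and $\th_{\ast}=\id$ and the module is \emph{strictly} associative and unitary. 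For a general $H$-unital $\Kr$-ring the twist $\g_{\ast}$ is still an automorphism of $K_{\ast}(R)$ commuting with the action, so multiplication by the central invertible element $v_8$ remains invertible regardless; the inverse operation is multiplication by $v_{-8}$ possibly precomposed with a power of $\g_{\ast}^{\pm1}$, which is still an isomorphism. I expect this bookkeeping with the twisting automorphism to be the main obstacle, though it is exactly the same obstacle already resolved in the complex case, so the resolution carries over verbatim with $2$ replaced by $8$.
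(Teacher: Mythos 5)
Your proposal is correct and follows essentially the same route as the paper, which derives Theorem \ref{th:PTR} from Theorems \ref{th:Kr comp} and \ref{th:n<0} exactly as Theorem \ref{th:PTC} is derived from Theorems \ref{th:Kc comp} and \ref{th:n<0}: an invertible degree-$8$ element $v_8\in K_8(\Kr)$ acting on the $K_{\ast}(\Kr)$-module $K_{\ast}(R)$. Your careful handling of the $\g_{\ast}$-twist (observing that twisted associativity only perturbs the inverse by an automorphism, and that $\g_{\ast}=\th_{\ast}=\id$ in the standard examples) is precisely the bookkeeping the paper records at the end of Section \ref{s:Kst C} and leaves implicit here.
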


\begin{theorem}\label{th:KRst} \ 
Any Banach $\Kr$-ring $R$ is $K_n$-stable
for every $n\in\Z$. In other words,
the comparison map $\ep_{\ast}\Cl K_{\ast}(R)\to K_{\ast}^\t(R)$
is an isomorphism of $\Z$-graded rings.
\end{theorem}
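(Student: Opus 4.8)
The plan is to deduce Theorem~\ref{th:KRst} from Theorems~\ref{th:Kr comp} and~\ref{th:n<0} exactly as Theorem~\ref{th:KCst} was deduced from Theorems~\ref{th:Kc comp} and~\ref{th:n<0}, the only change being that the degree-$2$ periodicity of the complex case is replaced by the degree-$8$ periodicity furnished by $v_8$. First I would set up the module framework. For a Banach $\Kr$-ring $R$ the action $\l$ makes $K_{\ast}(R)$ a graded $K_{\ast}(\Kr)$-module and $K_{\ast}^\t(R)$ a graded $K_{\ast}^\t(\Kr)$-module, and by the compatibility diagrams~\eqref{comp mult} the comparison map $\ep_{\ast}\Cl K_{\ast}(R)\to K_{\ast}^\t(R)$ is a homomorphism of graded modules lying over the comparison ring homomorphism $\ep^{\Kr}_{\ast}\Cl K_{\ast}(\Kr)\to K_{\ast}^\t(\Kr)$. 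By Theorem~\ref{th:Kr comp} the latter is an isomorphism carrying the invertible element $v_8\in K_8(\Kr)$ to the generator $v\in K_8^\t(\Kr)$; writing $v_8\cdot$ and $v\cdot$ for the associated degree-$8$ multiplication operators, these are isomorphisms (by Theorem~\ref{th:PTR}, and by invertibility of $v$ in~\eqref{KR}, respectively), and since $\ep_{\ast}$ is a module map it intertwines them, $\ep_{n+8}\c(v_8\cdot)=(v\cdot)\c\ep_n$ in every degree.

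Next I would dispose of the degrees $n\leq0$. Here I would note that a Banach $\Kr$-ring is a stable retract of $\Kr\pT R$: the homomorphism $\i_p\Cl R\to\Kr\pT R$, $r\mapsto p\T r$ (multiplicative once the identification $H\ti\T H\simeq H$ is fixed so that $\mu(p\T p)=p$, as arranged earlier), together with $\nu:=\th\c\i_2\c\l\Cl\Kr\pT R\to M_2(R)$, satisfy $\nu\c\i_p=\i_2$ by the stably quasiunitary diagram~\eqref{qun}; this is precisely the factorization~\eqref{i=lk} with $l=2$. Since $\Kr\pT R$ is of the form $\K\pT A$ with $A=R$ a Banach algebra, Theorem~\ref{th:n<0} shows that $\ep_n$ is an isomorphism for all $n\leq0$.

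Finally I would bootstrap to all degrees. For any $n>0$ choose $k\geq1$ with $n-8k\leq0$; iterating the intertwining relation gives
\[
\ep_n=(v\cdot)^{k}\c\ep_{n-8k}\c(v_8\cdot)^{-k},
\]
a composite of isomorphisms, because $\ep_{n-8k}$ is one by the previous step while $v_8\cdot$ and $v\cdot$ are invertible. Hence $\ep_n$ is an isomorphism for every $n\in\Z$, so $R$ is $K_n$-stable for all $n$; that $\ep_{\ast}$ moreover respects the multiplicative structure is already encoded in~\eqref{comp mult}.

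Given Theorems~\ref{th:Kr comp},~\ref{th:PTR} and~\ref{th:n<0}, this deduction is purely formal, so the genuine difficulty has been absorbed into Theorem~\ref{th:Kr comp} (the delicate mod-$16$ argument producing $v_8$), which I may assume. The one step still demanding attention is the availability of the $K_{\ast}(\Kr)$-module structure on $K_{\ast}(R)$, which presupposes that $R$---and with it $\Kr\TZ R$ and $\Kr\TZ\Kr\TZ R$---be $H$-unital over $\Q$, so that the relevant $K$-groups satisfy excision and the pairings in~\eqref{comp mult} are defined. I would extract the $H$-unitality of $R$ from the bounded approximate identity that a Banach $\Kr$-ring inherits through its stably quasiunitary $\Kr$-action, and then obtain $H$-unitality of the two tensor products from that of $\Kr$ and of $R$ via closure of $H$-unital $\Q$-algebras under $\T_\Q$. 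Pinning this down is the only place where analytic input beyond Theorem~\ref{th:Kr comp} enters, and it is exactly what makes Theorem~\ref{th:PTR} applicable to $R$.
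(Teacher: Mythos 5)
Your core deduction is exactly the paper's: Theorem~\ref{th:KRst} is disposed of there in a single sentence, as following from Theorems~\ref{th:Kr comp} and~\ref{th:n<0} ``precisely in the same way'' as Theorem~\ref{th:KCst} follows from Theorems~\ref{th:Kc comp} and~\ref{th:n<0} --- that is, by the module-theoretic bootstrap you spell out: $\ep_{\ast}$ is a homomorphism of graded modules lying over the ring isomorphism $K_{\ast}(\Kr)\simeq K_{\ast}^\t(\Kr)$, it is an isomorphism in degrees $\leq0$, and it intertwines the invertible degree-$8$ operators $v_8\cdot$ and $v\cdot$, whence $\ep_n=(v\cdot)^{k}\c\ep_{n-8k}\c(v_8\cdot)^{-k}$. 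Your explicit check that \eqref{qun} exhibits $R$ as a stable retract of $\Kr\pT R$, so that Theorem~\ref{th:n<0} applies, is a detail the paper leaves implicit, and it is correct; note only that $\k\Cl r\mapsto p\T r$ is multiplicative simply because $p^2=p$ in the tensor-product algebra $\Kr\pT R$ --- no normalization $\mu(p\T p)=p$ is needed, since $\mu$ is not the multiplication of $\Kr\pT R$.

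The one step that would fail is your closing claim that a Banach $\Kr$-ring \emph{inherits} a bounded approximate identity through its stably quasiunitary action. Diagram \eqref{qun} only says that $\l(p\T\;\;)$ becomes conjugate to the identity after stabilization by $\i_2$; it produces no net $(e_i)$ in $R$ with $\|e_ir-r\|\to0$. Nothing in the axioms \eqref{KR->R}--\eqref{qun} forces such a net: they are verified for $\Kr\pT A$ by formulas in which an approximate identity of $A$ plays no role, so they hold for an arbitrary Banach algebra $A$, whereas a bounded left approximate identity of $\Kr\pT A$ would slice down to one of $A$ (apply the bounded maps $c\T a\mapsto\ph(c)a$, $\ph\in\Kr^{\ast}$). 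The paper does not attempt this repair: $H$-unitality over $\Q$ is carried as an explicit hypothesis in Theorems~\ref{th:PTC}, \ref{th:KCst} and~\ref{th:PTR}, and again in Theorem~\ref{th:st KQ}, which quotes Theorem~\ref{th:KRst}; its absence from the wording of Theorem~\ref{th:KRst} is an inconsistency of the paper rather than something your quasiunitarity argument can remove. So you should keep $H$-unitality over $\Q$ as a standing hypothesis --- it does hold in the motivating examples, where $A$ has a one-sided bounded approximate identity --- and with that hypothesis restored, your proof coincides with the intended one.
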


\bigskip
\section{Hermitian $K$-theory of $\K$-rings}\label{s:KQ}
Let $A$ be a unital ring with anti-involution $\a$.
We shall assume throughout that $(A,\a)$ is \emph{split},
i.e.,\ that there exists an element $\l$ in the center of
$A$ such that $\l+\a(\l)=1$.
This condition is automatically satisfied when
$A$ is a $\Z[\frac12]$-algebra.
One can associate with $(A,\a)$
the Hermitian $K$-theory spectrum $\esKQ(A)$
where $\Ep=\pm1$ (see \cite{KaroubiAnnals} and
\cite{Schlichting} for basic references).
This parallels the construction of the algebraic
$K$-theory spectrum $\sK(A)$ for a unital ring.
(Note that we use
        notation $\sKQ$ instead of $\mathbf L$,
        the notation employed in \cite{KaroubiAnnals} and
	\cite{Berrick-Karoubi},
	in order to avoid confusion with the surgery spectrum
	and the surgery groups.)

The Hermitian $K$-groups $\eKQ_n(A)$ are the homotopy groups
of $\esKQ(A)$ in the same way as Quillen's $K$-groups $K_n(A)$
are the homotopy groups of $\sK(A)$.
For instance, if $A$ is a commutative ring and the anti-involution is
trivial, then
\[
{}_{-1}\KQ_n(A)=\pi_n\LL(\BSp(A)^+\RR)\qquad(n>0)
\]
where $\Sp(A)$ denotes the infinite symplectic group
$\bigcup_{d\geq1}\Sp_{2d}(A)$.

For $n>0$, one has $\eKQ_n(A)=\pi_n\bigl(\BeO(A)^+\bigr)$
where
\[
  \eO(A)=\bigcup_{l>0}\eO_{l,l}(A)
\]
is the group of $\Ep$-orthogonal matrices with
coefficients in $A$ \cite[p.\,64]{KaroubiVillamayor},
\cite[p.\,308]{KaroubiSLN343}.
The classical Whitehead's Theorem
$[\GL(A),\GL(A)]=\operatorname E(A)$ is
replaced by
\[
[\eO(A),\eO(A)]=\eE(A)
\]
where $\eE(A)$ is a suitable replacement
for the group of elementary matrices
\cite[\foreignlanguage{russian}{теоремы 1.4-1.4${}_0$}]{Wasserstein}.

The cone and suspension functors familiar from
algebraic $K$-theory are naturally
equipped with the induced anti-involutions and
the extension
\BE{SCM}
\BD
\node{SA}\node{CA}\arrow{w,,A}\node{MA}\arrow{w,,V}
\ED
\EE
leads to the homotopy fibration
\BE{SCM fibr}
\BD
\node{\BeE(SA)^+}
 \node{\BeO(CA)^+}\arrow{w}
  \node{\BeO(A)^+}\arrow{w}
\ED\,.
\EE
This is a consequence of the fact that
$\eO(CA)$ acts trivially on the homology of
\[
\eO(MA)=\bigcup_{r>0}\eO(M_r(A)).
\]
Indeed, for any unital ring with anti-involution,
the group $\eE(A)$ possesses a system of
generators $E_\b(a)$, labeled by elements
$a\in A$ and by elements $\b$ of a certain
set of ``roots'' $\Phi$ , such that
\[
E_\b(a+a')=E_\b(a)E_\b(a'),\qquad(a,a'\in A),
\]
for every $\b\in\Phi$ \cite[p.\,329]{Wasserstein},
cf.\ also
\cite[3.5(a), p.\,29]{Bak}. In view of this,
for any $c\in CA$ and $r>0$, one can represent $c$ as
$m+c'$ so that $m\in M_r(A)$ and $c'M_r(A)=M_r(A)c'=0$.
In particular, conjugation of elements of $\eO(M_r(A)$
by $E_\b(c)$ coincides with conjugation by the matrix
$E_\b(m)$ which, as an element of $\eO(M_r(A)$,
acts trivially on the homology of
the group to which it belongs.

Acyclicity of $\BeO(CA)^+$, which is proved very
much like the acyclicity of $\BGL(CA)^+$, implies
contractibility of $\BeO(CA)^+$ and yields
functorial isomorphisms
\BE{S iso}
\eKQ_n(A)\simeq\eKQ_{n+1}(SA)\qquad(n>0),
\EE
exactly like in algebraic $K$-theory.
In degrees $n\leq0$, isomorphism \eqref{S iso}
is a consequence of the long exact sequence of
$KQ$-groups, associated with extension \eqref{SCM},
used in conjunction with $\eKQ_n(CA)=0$ and
isomorphisms $\eKQ_n(A)=\eKQ_n(MA)=0$
holding for all $n\in\Z$.

\bigskip
If $A$ is a real or complex Banach algebra
with anti-involution, one can also define
the topological Hermitian $K$-theory spectrum
$\esKQ^\t(A)$ (see \cite{KaroubiSLN343}).
This can be achieved in a manner similar to
how the usual topological $K$-theory spectrum
$\sK^\t(A)$ is built. The definition
easily extends to nonunital Banach algebras.

In \cite{KaroubiSLN343} it was proved that
the Hermitian topological $K$-theory spectrum is 8-periodic:
\[
\esKQ^\t(A)\sim\Om^8(\esKQ^\t(A)).
\]
This homotopy equivalence is, in fact,
induced by the product
with a ``Bott element”
in ${}_1\KQ_8^\t(\R)\simeq\Z\ds\Z$.

\medskip
The functorial \emph{forgetful} and \emph{hyperbolic}
morphisms $\eF$ and $\eH$ connect the $K$-theory and
$\eKQ$-theory spectra
\[
\eF\Cl\esKQ(A) \LA \sK(A),
 \qquad{}\eH\Cl\sK(A) \LA \esKQ(A),
\]
and similarly for topological $K$-theory
\[
\eF^\t\Cl\esKQ^\t(A) \LA \sK^\t(A),
 \qquad\eH^\t\Cl\sK^\t(A) \LA \esKQ^\t(A).
\]
Following \cite{KaroubiAnnals} we denote
the homotopy fibers of $\eF$ and $\eH$
by, respectively, $\esV(A)$ and $\esU(A)$,
and their homotopy groups by, respectively,
\[
\eV_n(A)=\pi_n(\esV(A))
 \AND
\eU_n(A)=\pi_n(\esU(A))\qquad(n\in\Z).
\]
We shall call the cokernel of
the hyperbolic map,
\[
\eW_n(A):=\Coker\LL(K_n(A)
 \xrightarrow{\quad\eH_n\quad}\eKQ_n(A)\RR),
\]
the $n$-th \emph{Witt} group, and the kernel of the
forgetful map,
\[
\eW_n'(A):=\Ker\LL(\eKQ_n(A)
 \xrightarrow{\quad\eF_n\quad}K_n(A)\RR),
\]
the $n$-th \emph{co-Witt} group of a ring
with anti-involution $A$.

In the same way we define,
for a Banach algebra with anti-involution,
the spectra
$\esV^\t(A)$ and $\esU^\t(A)$,
their homotopy groups
$\eV_n^\t(A)$ and $\eU_n^\t(A)$,
and the topological Witt and co-Witt groups.
One can show that
\[
{}_1\mkern-1.4muW_n^\t(A)\simeq K_n^\t(A)
\]
for any real or complex $C^{\ast}$-algebra $A$
with $\a$ being the $*$-operation on $A$
\cite[Definition 2.2 and Théorème 2.3]{Karoubi Annals}.

The following theorem was initially established for
Banach algebras \cite{KaroubiSLN343} and later for general
unital rings with anti-involution \cite{KaroubiAnnals}.
Recently, it was extended to Grothendieck-Witt rings of
pointed pretriangulated DG-categories with weak equivalences
and duality \cite{Schlichting}.

\begin{theorem} \textbf{\textup{(The Fundamental Theorem
of Hermitian $K$-theory)}} \ 
For any unital ring with split anti-involution,
there exists a natural homotopy equivalence of spectra 
\[
\esV(A)\sim\Om(_{-\Ep}\mkern-0.7mu\sU(A)).
\]
 If $A$ is a Banach algebra, then there exists a similar
homotopy equivalence of spectra
\BE{V OmU top}
\esV^\t(A)\sim\Om(_{-\Ep}\mkern-0.7mu\sU^\t(A)),
\EE
and the corresponding functorial comparison morphisms
of spectra making the diagram 
\[
\BD
\node{\esV(A)}\arrow{e}\arrow{s}
 \node{\Om(_{-\Ep}\mkern-0.7mu\sU(A))}\arrow{s}\\
\node{\esV^\t(A)}\arrow{e}
 \node{\Om(_{-\Ep}\mkern-0.7mu\sU^\t(A))}
\ED
\]
commute up to homotopy. In particular,
\[
\eV_n(A)\simeq{}_{-\Ep}U_{n+1}(A)
 \AND
\eV_n^\t(A)\simeq{}_{-\Ep}U_{n+1}^\t(A)\qquad(n\in\Z).
\]
\end{theorem}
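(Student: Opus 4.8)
The plan is to construct an explicit natural comparison morphism $\esV(A)\LA\Om({}_{-\Ep}\mkern-0.7mu\sU(A))$ and to prove that it is a homotopy equivalence, first algebraically and then, by repeating the construction continuously, for Banach algebras. Recall that $\esV(A)$ is the homotopy fiber of the forgetful map $\eF\Cl\esKQ(A)\LA\sK(A)$, while ${}_{-\Ep}\mkern-0.7mu\sU(A)$ is the homotopy fiber of the hyperbolic map $\eH\Cl\sK(A)\LA{}_{-\Ep}\mkern-0.7mu\sKQ(A)$, so that $\Om({}_{-\Ep}\mkern-0.7mu\sU(A))$ is the fiber of $\Om\eH$. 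The comparison map should come from the observation that an $\Ep$-hermitian form which becomes metabolic after applying $\eF$ --- exactly the data classified by $\esV(A)$ --- determines, through a choice of Lagrangian, a $(-\Ep)$-hermitian structure on a loop. This reflects the basic phenomenon that shifting the duality by one suspension degree reverses the sign $\Ep\mapsto-\Ep$, and it is the honest geometric content behind the theorem.

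First I would reduce the spectrum-level statement to a comparison in a single degree. Using the cone and suspension functors equipped with their induced anti-involutions, together with the vanishing $\eKQ_n(CA)=0$ and $K_n(CA)=0$ and the suspension isomorphisms $\eKQ_n(A)\simeq\eKQ_{n+1}(SA)$ already recorded for \eqref{SCM}, checking that the natural map of spectra is a $\pi_{\ast}$-isomorphism is reduced to checking it in one fixed (large positive) degree over iterated suspensions $S^nA$. This turns the homotopy equivalence into the family of isomorphisms $\eV_n(A)\simeq{}_{-\Ep}U_{n+1}(A)$, so it suffices to identify the $(-\Ep)$-theory hyperbolic fiber with the looped $\Ep$-theory forgetful fiber at the level of $\pi_0$ of forms over $S^nA$. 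The split hypothesis $\l+\a(\l)=1$ enters precisely here, to guarantee that $\Ep$-hermitian and $\Ep$-quadratic data coincide, so that $\eF$ and $\eH$ behave as in the case $\tfrac12\in A$.

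The heart of the argument, and the step I expect to be the \emph{main obstacle}, is the base-case input identifying the two fibers. In the classical group-theoretic model one must show that the homogeneous space $\eO(A)\backslash\GL(A)$ --- equivalently, the category of metabolic $(-\Ep)$-forms --- carries the homotopy type making the constructed comparison map an equivalence; this requires homological stability for the orthogonal groups $\eO_{l,l}(A)$ together with a computation of the homology of that homogeneous space, so that the fibration $\eO(A)\LA\GL(A)\LA\eO(A)\backslash\GL(A)$ deloops correctly after the plus construction. In the modern reformulation the same identification is supplied by the additivity and localization (fibration) theorems for Grothendieck--Witt spectra, applied to the category of exact sequences whose Grothendieck--Witt theory is computed by $\sK(A)$. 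Either route is genuinely hard and furnishes the needed delooping; this is exactly the content of \cite{KaroubiAnnals} and \cite{Schlichting}.

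Finally, the Banach case and the commutativity of the square follow formally once the algebraic equivalence is in place. Every ingredient --- the cone and suspension functors, $\eF$ and $\eH$, the choice of Lagrangian, and the resulting delooping --- is functorial and admits a continuous counterpart, so verbatim repetition with $\esKQ^\t$ in place of $\esKQ$ yields the topological equivalence \eqref{V OmU top}. The comparison morphisms $\esKQ(A)\to\esKQ^\t(A)$ and $\sK(A)\to\sK^\t(A)$ commute with $\eF$ and $\eH$, hence induce maps of homotopy fibers compatible with the two deloopings, which makes the displayed square commute up to homotopy. The isomorphisms $\eV_n(A)\simeq{}_{-\Ep}U_{n+1}(A)$ and $\eV_n^\t(A)\simeq{}_{-\Ep}U_{n+1}^\t(A)$ are then obtained by passing to $\pi_n$ in the two spectrum-level equivalences.
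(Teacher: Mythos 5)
The first thing to note is that the paper does not prove this theorem at all: it is imported from the literature, with the Banach case from \cite{KaroubiSLN343}, the case of unital rings from \cite{KaroubiAnnals}, and the recent DG-categorical extension from \cite{Schlichting}. Your proposal, by your own account, places its entire mathematical weight on ``the heart of the argument''---the identification of the $(-\Ep)$-hyperbolic fiber with the looped $\Ep$-forgetful fiber---and then supplies that step by citing exactly these references. So what you have written is not an independent proof but a plausible scaffold around the same citation the paper itself makes; judged against the paper, that deferral is in fact the appropriate treatment, since the result is a deep external input rather than something provable in a page. The formal surroundings of your sketch are consistent with the sources: the reduction of a spectrum-level equivalence to degreewise isomorphisms via the suspension isomorphisms attached to \eqref{SCM}, the role of the split hypothesis $\l+\a(\l)=1$ as the surrogate for $\tfrac12\in A$ making hermitian and quadratic data agree, your identification of the hard input as homological stability plus the homology of the homogeneous space for the groups $\eO_{l,l}(A)$ (this is indeed the content of the companion paper \cite{Karoubi Annals}), and the compatibility of the comparison square with $\eF$ and $\eH$.

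One substantive correction, however. Your claim that the Banach case ``follows formally once the algebraic equivalence is in place,'' by verbatim continuous repetition, inverts both the history and the mathematics. The topological equivalence \eqref{V OmU top} was proved first, in \cite{KaroubiSLN343}, by Banach-category and Clifford-algebra methods, and it encodes genuine $8$-fold real Bott periodicity---this is exactly what Tables \ref{tab:RCH} and \ref{tab:RRCCHH} display, and the paper stresses that seven of those eight homotopy equivalences are nontrivial. The algebraic case came later, in \cite{KaroubiAnnals}, and its key input---group homology of the \emph{discrete} groups $\eO(A)$ fed through the plus construction---has no verbatim continuous analogue: in the topological setting the relevant homotopy types are those of honest topological groups, the plus construction plays no role, and the equivalence is established by entirely different means. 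So the topological half of the theorem is not a formal corollary of the algebraic half; each requires its own argument, which is precisely why the paper cites two separate sources for the two statements.
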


\bigskip
We illustrate the Fundamental Theorem in topological
Hermitian $K$-theory by considering Banach algebras
$\R$ and $\C$, equipped with trivial involution, and
the algebra of quaternions $\H$, equipped with
conjugation
\[
w=a+bi+cj+dk\;\LMT\;\bw=a-bi-cj-dk.
\]
The relevant information is collected in Table \ref{tab:RCH}.
The groups in columns 2--4 are filtered unions
of the compact forms of the corresponding Lie groups
and they have the same homotopy type as $\GL(A)$,
\pO\ and \mO. The spaces in the four right columns
represent, up to homotopy, the connected components
of the spaces occurring as degree 0 terms of the
corresponding $\Om$-spectra. The spaces in columns
5 and 7 are, up to connected components, the loop
spaces of their neighbors located in columns 6 and 8,
respectively.

\begin{table}[h]
\caption{} \label{tab:RCH}
 \renewcommand{\arraystretch}{1.4}
\begin{tabular}{@{}ccccccccccccccccc@{}}
\toprule
algebra &&$\GL$&&\pO&&\mO&&\pV&&\mU&&\mV&&\pU\\
\midrule
$\R$ &&   O && \OO  && U  &&  BO && \UO  && \OU  && O  \\
$\C$ &&   U &&  O   && Sp && \UO && \SpU && \USp &&\OU \\
$\H$ &&  Sp && \Spp && U  && BSp && \USp && \SpU && Sp \\[2pt]
\bottomrule
\end{tabular}
\end{table}

If we equip $\C$ with the complex conjugation automorphism
$\s$, or if we consider $A=B\x B^\op$, where $B$ is one
of $\R$, $\C$, or $\H$, with the anti-involution
transposing the factors, then in all four
cases one has \mO(A)$\,=\,$\pO(A).
Accordingly, we use the collective notation
\pmO, \pmV\ and \pmU\ when presenting the
relevant information in Table \ref{tab:RRCCHH}.

The classical homotopy equivalences of the complex
Bott Periodicity correspond to the homotopy equivalences
of \eqref{V OmU top} when $(A,\a)=(\C,\s)$ or
$A=\C\x \C^\op$. Six homotopy equivalences of
Table \ref{tab:RCH} and the remaining two
homotopy equivalences of Table \ref{tab:RRCCHH}
together describe eight homotopy equivalences
of the real Bott Periodicity.

The three equivalences for $A=B\x B^\op$ are
instances of the standard homotopy
equivalence $\text G\sim\Om\text{BG}$, and thus
can be considered ``trivial''. The remaining
seven are not.

\begin{table}[h]
\caption{} \label{tab:RRCCHH}
 \renewcommand{\arraystretch}{1.4}
\begin{tabular}{@{}ccccccccccc@{}}
\toprule
algebra &&$\GL$&&\pmO&&\pmV&&\pmU\\
\midrule
\RRo &&  \OO && O  &&  O && BO  \\
\CCo &&  \UU && U  &&  U && BU  \\
\HHo && \Spp && Sp && Sp && BSp \\
\midrule
$(\C,\s)$&&U&&\UU&&BU&& U  \\
\bottomrule
\end{tabular}
\end{table}

\bigskip
\section{Nonunital algebras with anti-involution}
A standard way to extend an additive functor $G$ from
unital to nonunital $k$-algebras is to consider
\BE{GAk}
G(A)_k:=\Ker\bigl(G(\tA_k)\LA G(k)\bigr)
\EE
where $\tA_k:=k\ltimes A$ is the \emph{unitalization}
of $A$ in the category of $k$-algebras.
The subscript indicates that \eqref{GAk}
in general depends on the choice of $k$-algebra
structure even though the original functor
on the category of unital $k$-algebras may not.
The algebraic $K$-functor is an example.
Additivity of $G$ implies that $G(A)_k$
is canonically isomorphic to $G(A)$ if
$A$ is unital.

\smallskip
Let us say that $(A,\a)$ is a \emph{$(k,\ph)$-algebra
with anti-involution} if $A$ is an algebra over a commutative
unital ring $k$ equipped with an involution $\ph$ such that
anti-involution $\a$ is $\ph$-linear
\[
\a(ca)=\ph(c)\a(a)\qquad(c\in k,\,a\in A).
\]
The corresponding unitalization $\tA_k:=k\ltimes A$ is then
naturally equipped with an anti-involution that extends $\a$
and
\[
\BD
\node{k}\node{\tA_k}\arrow{w,t,A}\pi\node{A}\arrow{w,,V}
\ED
\]
is a split extension in the category of $(k,\ph)$-algebras
with anti-involution. In what follows we will limit ourselves
to the case when $(k,\ph)$ is a split ring with anti-involution.
If $(A,\a)$ admits of such a $(k,\ph)$-algebra structure, we shall say
that $(A,\a)$ is a \emph{split} nonunital ring with anti-involution.

If $G$ is a functor on the category of $(k,\ph)$-algebras
with anti-involution, we shall be denoting the object
defined in \eqref{GAk} by $G(A)_\kf$. If the result
\emph{does not} depend on $\ph$, we shall drop subscript $\ph$.
If it does not depend on $(k,\ph)$, we shall drop
both subscripts and simply write $G(A)$. Thus, we obtain
\[
K_n(A)_k,\quad
 \eKQ_n(A)_\kf,\quad
  \eV_n(A)_\kf,\quad
   \eU_n(A)_\kf,\quad
    \eW_n(A)_\kf,\quad\text{and}\quad\eW_n'(A)_\kf.
\]
Note that any homomorphism $(k,\ph)\to(k',\ph')$ of rings with
anti-involution induces the corresponding homomorphisms
\BE{kk'}
\eKQ_n(A)_\kf \LA \eKQ_n(A)_{k',\ph'}
\EE
and
likewise for $\eV_n$ and $\eU_n$ as well as the Witt
and co-Witt groups. The maps in \eqref{kk'} are instances
of the morphisms to be discussed in a section
devoted to relative Hermitian $K$-theory and excision,
cf.\ \eqref{KQ RA}.

\smallskip
Analogous considerations apply to functors on the category
of Banach algebras with anti-involution. In this case
$k=F$ is the ground field, either $\R$ or $\C$, and
$\ph=\id$ unless $A$ is a $\C$-algebra with a sesquilinear
anti-involution which can happen only when $\ph=\s$,
the complex-conjugation automorphism of $\C$.
The topological groups\label{not depend}
\[
K_n^\t(A)_{F,\ph}\AND\eKQ_n^\t(A)_{F,\ph}\;,
\]
indeed do not depend on $(F,\ph)$ and therefore
will be denoted simply $K_n^\t(A)$ and $\eKQ_n^\t(A)$.

\medskip
A natural transformation of additive functors
\[
\tau\Cl G\LA G'
\]
on the category of $(k,\ph)$-algebras induces
a morphism of functorially split short exact sequences
\BE{GtG'}
\BD
\node{G(k)}\arrow{s,r}{\tau_k}
 \node{G(\tA_k)}\arrow{w,tb,A}{G\pi}{\dashrightarrow}
   \arrow{s,r}{\tau_{\tA_k}}
  \node{G(A)_\kf}\arrow{w,,V}\arrow{s,r}{(\tau_A)_\kf}\\
\node{G'(k)}
 \node{G'(\tA_k)}\arrow{w,tb,A}{G'\pi}{\dashrightarrow}
  \node{G'(A)_\kf}\arrow{w,,V}
\ED
\EE
which implies that $\tau_{\tA_k}$ is isomorphic to
$\tau_k\ds(\tau_A)_\kf$ with respect to the induced
decompositions $G(\tA_k)\simeq G(k)\ds G(A)_\kf$
and $G'(\tA_k)\simeq G'(k)\ds G'(A)_\kf$.
As a consequence, if the target category
has kernels and cokernels, and we set
\[
\G(A):=\Ker\tau_A\AND \G'(A):=\Coker\tau_A\;,
\]
we obtain from \eqref{GtG'} functorially split
short exact sequences, one for $\G$,
\BE{Ga}
\BD
\node{\G(k)}
 \node{\G(\tA_k)}\arrow{w,tb,A}{\G\pi}{\dashrightarrow}
  \node{\Ker(\tau_A)_\kf}\arrow{w,,V}
\ED\;,
\EE
and one for $\G'$,
\BE{Ga'}
\BD
\node{\G'(k)}
 \node{\G'(\tA_k)}\arrow{w,tb,A}{\G'\pi}{\dashrightarrow}
  \node{\Coker(\tau_A)_\kf}\arrow{w,,V}
\ED\;.
\EE
In particular, the functorial morphisms
\BE{Ker Coker}
\Ker(\tau_A)_\kf\xrightarrow{\quad\sim\quad}\G(A)_\kf
 \AND
\Coker(\tau_A)_\kf\xrightarrow{\quad\sim\quad}\G'(A)_\kf
\EE
are isomorphisms.

\smallskip
Another consequence is that $(\tau_A)_\kf$ is an isomorphism
if both $\tau_k$ and $\tau_{\tA_\kf}$ are
isomorphisms.\label{iso->iso}

\smallskip
After making these preliminary remarks, we are ready
to establish the following useful fact.

\begin{prop}\label{pr:W'}
For any Banach algebra $A$, the canonical comparison maps
\BE{W1}
\eW_1(A)_{F,\ph}\LA\eW^\t_1(A)
\EE
and
\BE{W'-1}
\eW'_{-1}(A)_{F,\ph}\LA\eW^{\prime\,\t}_{-1}(A)
\EE
are isomorphisms. \end{prop}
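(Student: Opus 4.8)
The plan is to reduce the proposition to the case of unital Banach algebras by means of the nonunital machinery just set up, and then to settle the unital case with the Fundamental Theorem, which deposits both statements in degree $0$. First I would observe that each group in \eqref{W1}--\eqref{W'-1} is an instance of the construction \eqref{GAk}: on the algebraic side $\eW_1(\,\cdot\,)$ and $\eW'_{-1}(\,\cdot\,)$ are the cokernel and the kernel of the natural transformations $\eH_1\Cl K_1\to\eKQ_1$ and $\eF_{-1}\Cl\eKQ_{-1}\to K_{-1}$ of additive functors on unital $(F,\ph)$-algebras, so by \eqref{Ker Coker} the groups $\eW_1(A)_{F,\ph}$ and $\eW'_{-1}(A)_{F,\ph}$ coincide with those kernel and cokernel functors extended to the nonunital $A$ via unitalization. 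On the topological side, excision holds for every Banach algebra, so $\eW_1^\t(A)$ and $\eW^{\prime\,\t}_{-1}(A)$ agree with their unitalization versions and are independent of $(F,\ph)$. Hence the maps \eqref{W1}--\eqref{W'-1} are exactly the maps $(\tau_A)_{F,\ph}$ attached as in \eqref{GtG'} to the natural transformations $\tau$ comparing the algebraic and topological Witt (respectively co-Witt) functors on unital algebras, and the remark following \eqref{Ker Coker} (see \ref{iso->iso}) tells me that $(\tau_A)_{F,\ph}$ is an isomorphism as soon as $\tau$ is an isomorphism on $F$ and on the unitalization $\tA_{F,\ph}$. Since $\tA_{F,\ph}$ ranges over all unital Banach algebras, this reduces the proposition to the assertion that $\eW_1(A)\to\eW_1^\t(A)$ and $\eW'_{-1}(A)\to\eW^{\prime\,\t}_{-1}(A)$ are isomorphisms whenever $A$ is unital.

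For the unital case I would treat the two degrees through the homotopy fibrations $\esU=\operatorname{fib}(\eH)$ and $\esV=\operatorname{fib}(\eF)$, whose long exact sequences give
\[
\eW_1(A)=\Ker\bigl(\eU_0(A)\to K_0(A)\bigr)
 \AND
\eW'_{-1}(A)=\Coker\bigl(K_0(A)\to\eV_{-1}(A)\bigr).
\]
The Witt side is then immediate up to one point: the comparison $\eW_1(A)=\Coker(\eH_1)\to\Coker(\eH_1^\t)=\eW_1^\t(A)$ is surjective because $\eKQ_1(A)=\eO(A)/\eE(A)\to\eKQ_1^\t(A)=\pi_0\bigl(\eO(A)\bigr)$ is surjective, since every topological component of the $\Ep$-orthogonal group contains an algebraic point. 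For the co-Witt side, which sits in a negative degree, I would invoke the Fundamental Theorem: the equivalence \eqref{V OmU top}, being compatible with the comparison maps, identifies $\eV_{-1}(A)\simeq{}_{-\Ep}U_0(A)$ and thereby rewrites $\eW'_{-1}(A)=\Coker\bigl(K_0(A)\to{}_{-\Ep}U_0(A)\bigr)$, a degree-$0$ statement. This is exactly why degrees $+1$ and $-1$ are singled out: they are the only ones in which the Fundamental Theorem brings the problem down to the degree-$0$ group $K_0$, where $K_0(A)\to K_0^\t(A)$ is an isomorphism for unital Banach $A$; one degree higher a noncomparable $K_1$ or $\eKQ_1$ would intervene.

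The hard part will be the residual gap between the algebraic and topological degree-$0$ fibre terms. Feeding $K_0(A)\cong K_0^\t(A)$ into the map of long exact sequences shows readily that $\eU_0(A)\to\eU_0^\t(A)$ is surjective, but its failure to be injective occurs precisely along the image of $\eKQ_1(A)$, which is $\eW_1(A)$ itself; so the five lemma alone is circular, and one must input a statement that genuinely distinguishes the hyperbolic image in degree $1$. Concretely, what remains to be shown is that an element of $\eKQ_1(A)$ which is hyperbolic in the \emph{topological} theory is already hyperbolic algebraically --- equivalently, that the identity component of $\eO(A)$ lies, modulo $\eE(A)$, in the image of the algebraic hyperbolic map, together with the dual statement for the forgetful map that governs $\eW'_{-1}$ (for the opposite sign $-\Ep$). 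This is a connectedness computation of the same nature as the classical identification, for a Banach algebra, of the identity component of $\GL(A)$ with the elementary subgroup $\E(A)$, and it can be extracted from the construction of topological Hermitian $K$-theory in \cite{KaroubiSLN343}. I expect this connected-component lemma, carried out uniformly for $\Ep=\pm1$, to be the only real obstacle; granting it, the two reductions above close the argument.
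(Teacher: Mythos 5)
Your proposal reproduces the paper's architecture in its two load-bearing parts and diverges only in how the co-Witt case is organized. The reduction from nonunital to unital algebras via unitalization, the splittings \eqref{GtG'}--\eqref{Ka}\eqref{Ker Coker}, and the observation that $(\tau_A)_{F,\ph}$ is an isomorphism once $\tau_F$ and $\tau_{\tA_F}$ are, is exactly what the paper does (its proof ends with ``the nonunital case follows from a remark that preceded Proposition \ref{pr:W'}''). For unital $A$, the paper does not argue from the fibrations at all: it quotes the isomorphism $\eW_1(A)\simeq\eW_1^\t(A)$ directly from \cite[pp.\,404--405]{KaroubiSLN343} --- which is precisely your ``connected-component lemma'' (you proved surjectivity; the injectivity assertion, that classes of $\eKQ_1(A)$ hyperbolic in the topological theory are hyperbolic algebraically, is the content of the cited pages) --- and then settles \eqref{W'-1} in a single five-lemma application to diagram \eqref{12term}, whose rows are the 12-term exact sequence of \cite[Théorème 4.3]{KaroubiAnnals}. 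The point of that sequence is that the terms flanking ${}_{-\Ep}W'_{-1}$ and $\eW_1$, namely $\eW'_0(A)$ and the Tate cohomology groups $k_0(A)$, $k'_0(A)$ of $\Ztwo$ acting on $K_0(A)$, are manufactured purely from degree-0 data and hence are literally unchanged under comparison. Your alternative --- rewriting $\eW'_{-1}(A)=\Coker\bigl(K_0(A)\to{}_{-\Ep}U_0(A)\bigr)$ via \eqref{V OmU top} and comparing ${}_{-\Ep}U_0$ with ${}_{-\Ep}U_0^\t$ --- is sound and amounts to unwinding by hand the 12-term sequence, which is itself distilled from the Fundamental Theorem; but to close it you must also state that $\eKQ_0(A)\simeq\eKQ_0^\t(A)$ for unital Banach $A$ (the degree-0 agreement of Grothendieck--Witt groups, used implicitly by the paper too), since your five lemma runs along the exact sequence $0\to{}_{-\Ep}W_1\to{}_{-\Ep}U_0\to K_0\to{}_{-\Ep}\KQ_0$ and needs the last term compared as well. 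As for the one step you leave unproven: since the paper disposes of the unital degree-1 statement by citation rather than proof, this is a citation-level dependency, not a genuine gap, and your diagnosis of why the naive five lemma is circular there --- the failure of injectivity of $\eU_0\to\eU_0^\t$ is governed by the very group $\eW_1$ being compared --- is accurate and is exactly why the paper imports that statement wholesale.
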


\begin{proof}
The map in \eqref{W1} is an isomorphism
for a unital Banach algebra as was observed
in \cite[pp.\,404--405]{KaroubiSLN343}.
The following commutative diagram
{\small
\BE{12term}
\BD
    \node{\eW'_0(A)}\arrow{s,,=,-}
   \node{k'_0(A)}\arrow{w}\arrow{s,,=,-}
  \node{{}_{-\Ep}W'_{-1}(A)}\arrow{w}\arrow{s}
 \node{\eW_1(A)}\arrow{w}\arrow{s,r}{\simeq}
\node{k_0(A)}\arrow{w}\arrow{s,,=,-}\\
    \node{\eW'_0(A)}
   \node{k'_0(A)}\arrow{w}
  \node{{}_{-\Ep}W^{\prime\,\t}_{-1}(A)}\arrow{w}
 \node{\eW^\t_1(A)}\arrow{w}
\node{k_0(A)}\arrow{w}
\ED
\EE}%
whose rows are portions of a 12-term exact sequence,
cf.\ 
\cite[Théorème 4.3, p.~278]{KaroubiAnnals},
demonstrates the same for the map in \eqref{W'-1}.
Here $k_0(A)$ and $k_0^{^{\prime}}(A)$
denote the even and, respectively, the odd Tate
cohomology groups of $\Ztwo$ acting
on $K_0(A)$ by
\[
[P]\LMT\bigl[P^{\dagger}\bigr]
\]
where $P^{\dagger}$ denotes the module of $\a$-linear
maps $P\to A$
from a finitely generated projective right $A$-module $P$
to $A$.
Recall that the Tate groups of the cyclic group
of order 2 acting on an abelian group $V$
are 2-periodic. The \emph{even} group equals
\[
H^{\text{ev}}(\Ztwo;V)=
 \{v\in V\mid v-\bv=0\}/\{w+\bw\mid w\in V\},
\]
where $w\mapsto\bw$ denotes the action of the generator,
while the \emph{odd} one equals
\[
H^{\text{odd}}(\Ztwo;V)=
 \{v\in V\mid v+\bv=0\}/\{w-\bw\mid w\in V\}.
\]
The nonunital case follows from a remark that preceded
Proposition \ref{pr:W'}.
\end{proof}

\smallskip
As was already mentioned in the context of general
additive functors, if $A$ is a ring with identity,
groups $\eKQ_n(A)_\kf$ are canonically isomorphic
to $\eKQ_n(A)$. In particular, the former do not depend
on the choice of a $(k,\ph)$-algebra structure on $(A,\a)$.
The same holds also for the $\eV_n$ and $\eU_n$ groups,
as well as the Witt and co-Witt groups.

\smallskip
The long exact sequences associated with fibrations
\BE{VKQK UKQK}
\sK \LLA
 \esKQ \LLA
  \esV
\AND
\esKQ \LLA
 \sK \LLA
  \esU
\EE
for unital rings, induce functorial long exact sequences
\BE{VKKQ le}
\cdots \LLA
 \eV_{n-1}(A)_\kf \LLA
  K_n(A)_k \LLA
   \eKQ_n(A)_\kf \LLA
    \eV_n(A)_\kf \LLA
     \cdots,
\EE
and, respectively,
\BE{UKKQ le}
\cdots \LLA
 \eU_{n-1}(A)_\kf \LLA
  \eKQ_n(A)_k \LLA
   K_n(A)_k \LLA
    \eU_n(A)_\kf \LLA
     \cdots.
\EE
While working with the long exact sequences
of \eqref{VKKQ le}--\eqref{UKKQ le},
it is important to bear in mind existence of functorial
isomorphisms
\[
\Coker\LL(K_n(A)_k
 \xrightarrow{\quad\eH_n\quad}\eKQ_n(A)_\kf\RR)
  \;\simeq\; \eW_n(A)_\kf
\]
and
\[
\Ker\LL(\eKQ_n(A)_\kf
 \xrightarrow{\quad\eF_n\quad}K_n(A)_k\RR)
  \;\simeq\; \eW_n'(A)_\kf
\]
which are special instances of the general isomorphisms
of \eqref{Ker Coker}.

\smallskip
There are similar long exact sequences involving
\emph{topological} groups for nonunital Banach algebras.

\section{Induction Theorems}
The two results we are going to
discuss now, collectively called
\emph{Induction Theorems}, reflect a key
feature of Hermitian $K$-theory and admit
a number of variants. Formally speaking,
all those variants sound exactly the
same, the only difference being in the meaning
of the symbols
\BE{KQVUW}
K_n,\qquad\eKQ_n,
 \qquad\eV_n,
  \qquad\eU_n,
   \qquad\eW_n,
    \qquad\eW'_n,
\EE
and
\BE{bKQVUW}
\bK_n,\qquad\ebKQ_n,
 \qquad\ebV_n,
  \qquad\ebU_n,
   \qquad\ebW_n,
    \qquad\ebW'_n,
\EE
present in their formulations.

We begin by considering one situation
served by Induction Theorems: the case of
a homomorphism $f\Cl A\to\bA$ between
unital rings with anti-involution.
In the two theorems that follow,
the lists of symbols in \eqref{KQVUW}--\eqref{bKQVUW}
have the following meaning
\[
K_n=K_n(A),\qquad\eKQ_n=\eKQ_n(A),
 \qquad\eV_n=\eV_n(A),\qquad\dots,
\]
and
\[
\bK_n=K_n(\bA),
 \qquad\ebKQ_n=\ebKQ_n(\bA),
  \qquad\ebV_n=\eV_n(\bA),\qquad\dots,
\]
and the maps between those $K$-groups,
$\eKQ$-groups, $\eV$-groups, etc.,
are all assumed to be induced by $f\Cl A\to\bA$.

\begin{theorem}[Upwards Induction]\label{th:UI}
Let us assume that
\[
K_n\simeq\bK_n,
 \quad K_{n+1}\simeq\bK_{n+1},
  \quad\eKQ_n\simeq\ebKQ_n,
   \aND\eW_{n+1}\simeq\ebW_{n+1}
\]
for both $\Ep=1$ and $-1$.
Then
\BE{UI1}
\eU_n\simeq\ebU_n,
 \qquad\eV_n\simeq\ebV_n,
  \qquad\eU_{n+1}\simeq\ebU_{n+1},
   \qquad\eV_{n-1}\simeq\ebV_{n-1},
\EE
as well as
\BE{UI2}
\eKQ_{n+1}\simeq\ebKQ_{n+1}
 \AND\eW_{n+2}\simeq\ebW_{n+2}
\EE
for both $\Ep=1$ and $-1$.
\end{theorem}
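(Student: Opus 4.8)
The plan is to run the two functorial long exact sequences \eqref{VKKQ le} and \eqref{UKKQ le}, attached to the homotopy fibrations \eqref{VKQK UKQK}, simultaneously for $A$ and for $\bA$, and to propagate the four hypotheses through them by a succession of four- and five-lemma arguments. The crucial extra ingredient is the Fundamental Theorem, which supplies natural isomorphisms $\eV_m\simeq{}_{-\Ep}U_{m+1}$ for \emph{both} signs of $\Ep$; by naturality in the ring, $f$ induces an isomorphism on $\eV_m$ if and only if it does so on ${}_{-\Ep}U_{m+1}$. Since every hypothesis and every conclusion is posited for both values of $\Ep$, this lets me shuttle surjectivity and injectivity statements between the $\eU$-sequence and the $\eV$-sequence, which is what makes the finite list of hypotheses self-propagating.

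I would start in degree $n$. Exactness of \eqref{UKKQ le}, together with the identification $\eW_{n+1}=\Coker(\eH_{n+1})$, yields a natural short exact sequence
\[
0\LA\eW_{n+1}\LA\eU_n\LA\Ker(\eH_n)\LA0,\qquad\Ker(\eH_n)\sbeq K_n.
\]
Because $f$ induces isomorphisms $K_n\simeq\bK_n$ and $\eKQ_n\simeq\ebKQ_n$ and the hyperbolic map is natural, the induced map on $\Ker(\eH_n)$ is an isomorphism; combined with the hypothesis $\eW_{n+1}\simeq\ebW_{n+1}$ and the five lemma this gives $\eU_n\simeq\ebU_n$, and therefore $\eV_{n-1}\simeq\ebV_{n-1}$ by the Fundamental Theorem, for both signs of $\Ep$.

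The heart of the matter, and the main obstacle, is degree $n+1$, where a genuine circularity appears: proving injectivity of $\eKQ_{n+1}\to\ebKQ_{n+1}$ directly from \eqref{UKKQ le} would require surjectivity of $\eU_{n+1}\to\ebU_{n+1}$, which is itself one of the conclusions. I would break the loop with the Fundamental Theorem. The surjectivity form of the four lemma applied to the segment $K_{n+1}\LA\eKQ_{n+1}\LA\eU_n\LA K_n$ gives \emph{surjectivity} of $\eKQ_{n+1}\to\ebKQ_{n+1}$ using only $K_{n+1}\simeq\bK_{n+1}$, the just-proved $\eU_n\simeq\ebU_n$, and $K_n\simeq\bK_n$. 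Independently, the same four lemma applied to the $\eV$-segment $K_{n+1}\LA{}_{-\Ep}V_n\LA{}_{-\Ep}\KQ_n\LA K_n$ shows that $f$ acts surjectively on ${}_{-\Ep}V_n$; under the Fundamental Theorem identification ${}_{-\Ep}V_n\simeq\eU_{n+1}$ this is exactly surjectivity of $\eU_{n+1}\to\ebU_{n+1}$. That surjectivity is precisely what is needed to run the injectivity form of the four lemma on $\eU_{n+1}\LA K_{n+1}\LA\eKQ_{n+1}\LA\eU_n$ and conclude that $\eKQ_{n+1}\to\ebKQ_{n+1}$ is also injective. Hence $\eKQ_{n+1}\simeq\ebKQ_{n+1}$ for both signs, which is half of \eqref{UI2}.

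With $\eKQ_{n+1}\simeq\ebKQ_{n+1}$ secured, I would finish by the $\eV$-analogue of the first step. Exactness of \eqref{VKKQ le}, with $\eW'_n=\Ker(\eF_n)$, gives a natural short exact sequence
\[
0\LA\Coker\bigl({}_{-\Ep}F_{n+1}\bigr)\LA{}_{-\Ep}V_n\LA{}_{-\Ep}W'_n\LA0,
\]
whose outer terms are now controlled: the cokernel by $\eKQ_{n+1}\simeq\ebKQ_{n+1}$ and $K_{n+1}\simeq\bK_{n+1}$, and ${}_{-\Ep}W'_n$ by the hypotheses on $\eKQ_n$ and $K_n$. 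The five lemma then shows $f$ is an isomorphism on ${}_{-\Ep}V_n$, equivalently on $\eU_{n+1}$, for both signs; thus $\eU_{n+1}\simeq\ebU_{n+1}$ and, again through the Fundamental Theorem, $\eV_n\simeq\ebV_n$, completing \eqref{UI1}. Finally $\eW_{n+2}\simeq\ebW_{n+2}$ follows from the identification $\eW_{n+2}\simeq\Ker(\eU_{n+1}\to K_{n+1})$ read off from \eqref{UKKQ le}, using $\eU_{n+1}\simeq\ebU_{n+1}$ and $K_{n+1}\simeq\bK_{n+1}$. Beyond the circularity already noted, the one thing requiring care is the sign bookkeeping: each appeal to the Fundamental Theorem interchanges $\Ep$ and $-\Ep$, so both conventions must be carried in parallel and the hypotheses, assumed for both signs, invoked with the correct sign at each stage.
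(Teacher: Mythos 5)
Your proposal is correct and follows essentially the same route as the paper: both arguments run four- and five-lemma chases along the long exact sequences \eqref{VKKQ le} and \eqref{UKKQ le}, use the identifications of $\eW_{m}$ and $\eW'_{m}$ as cokernel of the hyperbolic and kernel of the forgetful map, and invoke the Fundamental Theorem (with its sign flip $\eV_m\simeq{}_{-\Ep}U_{m+1}$, which is why the hypotheses for both $\Ep=\pm1$ are needed) to shuttle information between the two sequences. The only difference is a harmless permutation of intermediate steps: the paper first proves $\eV_n\simeq\ebV_n$ by a five lemma whose extreme vertical arrow is merely the surjection $\eKQ_{n+1}\onto\ebKQ_{n+1}$ and then deduces $\eKQ_{n+1}\simeq\ebKQ_{n+1}$ and $\eU_{n+1}\simeq\ebU_{n+1}$, whereas you first extract surjectivity of $\eU_{n+1}\to\ebU_{n+1}$ from the ${}_{-\Ep}\mkern-1muV$-segment via the Fundamental Theorem and use it to get injectivity of $\eKQ_{n+1}\to\ebKQ_{n+1}$ before concluding.
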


\begin{proof}
We shall analyze a sequence of commutative
diagrams whose rows are portions of the
long exact sequences of homotopy groups of
two fibrations in \eqref{VKQK UKQK} and whose
vertical arrows are the maps induced by $f$.

In view of the hypothesis, the diagram
\[
\BD
    \node{\eKQ_n}\arrow{s,r}{\simeq}
   \node{K_n}\arrow{w}\arrow{s,r}{\simeq}
  \node{\eU_n}\arrow{w}\arrow{s}
 \node{\eKQ_{n+1}}\arrow{w}\arrow{s}
\node{K_{n+1}}\arrow{w}\arrow{s,r}{\simeq}\\
    \node{\ebKQ_n}
   \node{\bK_n}\arrow{w}
  \node{\ebU_n}\arrow{w}
 \node{\ebKQ_{n+1}}\arrow{w}
\node{\bK_{n+1}}\arrow{w}
\ED
\]
yields the diagrams
\[
\BD
    \node{\eKQ_n}\arrow{s,r}{\simeq}
   \node{K_n}\arrow{w}\arrow{s,r}{\simeq}
  \node{\eU_n}\arrow{w}\arrow{s}
 \node{\eW_{n+1}}\arrow{w}\arrow{s,r}{\simeq}
\node{0}\arrow{w}\\
    \node{\ebKQ_n}
   \node{\bK_n}\arrow{w}
  \node{\ebU_n}\arrow{w}
 \node{\ebW_{n+1}}\arrow{w}
\node{0}\arrow{w}
\ED
\]
and
\[
\BD
   \node{0}
  \node{\eW_{n+1}}\arrow{w}\arrow{s,r}{\simeq}
 \node{\eKQ_{n+1}}\arrow{w}\arrow{s}
\node{K_{n+1}}\arrow{w}\arrow{s}\\
   \node{0}
  \node{\ebW_{n+1}}\arrow{w}
 \node{\ebKQ_{n+1}}\arrow{w}
\node{\bK_{n+1}}\arrow{w}
\ED\;.
\]
The Five Lemma implies that $\eU_n\simeq\ebU_n$ and
$\eKQ_{n+1}\onto\ebKQ_{n+1}$. By applying it again to the diagram
\[
\BD
    \node{K_n}\arrow{s,r}{\simeq}
   \node{\eKQ_n}\arrow{w}\arrow{s,r}{\simeq}
  \node{\eV_n}\arrow{w}\arrow{s}
 \node{K_{n+1}}\arrow{w}\arrow{s,r}{\simeq}
\node{\eKQ_{n+1}}\arrow{w}\arrow{s,,A}\\
    \node{\bK_n}
   \node{\ebKQ_n}\arrow{w}
  \node{\ebV_n}\arrow{w}
 \node{\bK_{n+1}}\arrow{w}
\node{\ebKQ_{n+1}}\arrow{w}
\ED\;.
\]
we obtain $\eV_n\simeq\ebV_n$.

The remaining two isomorphisms in \eqref{UI1} follow from
the two proven ones in view of the Fundamental Theorem
of Hermitian $K$-theory.

The Five Lemma applied to the diagrams
\[
\BD
    \node{K_n}\arrow{s,r}{\simeq}
   \node{\eU_n}\arrow{w}\arrow{s,r}{\simeq}
  \node{\eKQ_{n+1}}\arrow{w}\arrow{s}
 \node{K_{n+1}}\arrow{w}\arrow{s,r}{\simeq}
\node{\eU_{n+1}}\arrow{w}\arrow{s,r}{\simeq}\\
    \node{\bK_n}
   \node{\ebU_n}\arrow{w}
  \node{\ebKQ_{n+1}}\arrow{w}
 \node{\bK_{n+1}}\arrow{w}
\node{\ebU_{n+1}}\arrow{w}
\ED
\]
and
\[
\BD
    \node{\eKQ_{n+1}}\arrow{s,r}{\simeq}
   \node{K_{n+1}}\arrow{w}\arrow{s,r}{\simeq}
  \node{\eU_{n+1}}\arrow{w}\arrow{s,r}{\simeq}
 \node{\eW_{n+2}}\arrow{w}\arrow{s}
\node{0}\arrow{w}\\
    \node{\ebKQ_{n+1}}
   \node{\bK_{n+1}}\arrow{w}
  \node{\ebU_{n+1}}\arrow{w}
 \node{\ebW_{n+2}}\arrow{w}
\node{0}\arrow{w}
\ED
\]
yields the two isomorphisms of \eqref{UI2}.
\end{proof}

\begin{cor}\label{cor:UI}
If
\[
\eKQ_{l}\simeq\ebKQ_{l}\AND
 \eKQ_{l+1}\simeq\ebKQ_{l+1},
\]
for a given $l$ and for both $\Ep=1$ and $-1$,
and if
\[
  K_n\simeq\bK_n\qquad(n\geq l),
\]
then
\[
\eKQ_n\simeq\ebKQ_n,\qquad
 \eV_n\simeq\ebV_n,\AND
  \eU_n\simeq\ebU_n,
\]
for $n\geq l$ and both $\Ep=1$ and $-1$.
\end{cor}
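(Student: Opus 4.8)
The plan is to prove the corollary by induction on $n\geq l$, invoking the Upwards Induction Theorem (Theorem \ref{th:UI}) at each step. The key observation is that the conclusions \eqref{UI1}--\eqref{UI2} produced by Theorem \ref{th:UI} at level $n$ are exactly the hypotheses needed to run it again at level $n+1$: the theorem returns $\eKQ_{n+1}\simeq\ebKQ_{n+1}$ and $\eW_{n+2}\simeq\ebW_{n+2}$, while the $K$-group isomorphisms $K_m\simeq\bK_m$ are furnished for all $m\geq l$ by hypothesis. Thus the argument is a bootstrap carrying the comparison isomorphisms upward one degree at a time, run for both $\Ep=1$ and $-1$ simultaneously, as in the statement of Theorem \ref{th:UI}.

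First I would settle the base case, which amounts to verifying the four hypotheses of Theorem \ref{th:UI} at $n=l$. Three are immediate: $K_l\simeq\bK_l$ and $K_{l+1}\simeq\bK_{l+1}$ hold because $l,l+1\geq l$, and $\eKQ_l\simeq\ebKQ_l$ is assumed. The only ingredient not given outright is $\eW_{l+1}\simeq\ebW_{l+1}$, and this is where the second Hermitian hypothesis $\eKQ_{l+1}\simeq\ebKQ_{l+1}$ is used. Since $\eW_{l+1}$ is, functorially in $(A,\a)$, the cokernel of the hyperbolic map $\eH_{l+1}\Cl K_{l+1}\to\eKQ_{l+1}$, and $f$ induces a commutative square whose two vertical arrows $K_{l+1}\simeq\bK_{l+1}$ and $\eKQ_{l+1}\simeq\ebKQ_{l+1}$ are isomorphisms, the induced map $\eW_{l+1}\to\ebW_{l+1}$ on cokernels is an isomorphism, for both $\Ep=\pm1$. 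Hence Theorem \ref{th:UI} applies at $n=l$.

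The inductive step is then purely formal. Suppose that for some $n\geq l$ and both $\Ep=\pm1$ one has $\eKQ_n\simeq\ebKQ_n$ and $\eW_{n+1}\simeq\ebW_{n+1}$; together with $K_n\simeq\bK_n$ and $K_{n+1}\simeq\bK_{n+1}$, available since $n\geq l$, these are exactly the hypotheses of Theorem \ref{th:UI}. Its conclusion delivers $\eU_n\simeq\ebU_n$ and $\eV_n\simeq\ebV_n$, together with $\eKQ_{n+1}\simeq\ebKQ_{n+1}$ and $\eW_{n+2}\simeq\ebW_{n+2}$; the latter two reinstate the inductive hypotheses one degree higher. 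As the base case verifies the hypotheses at $n=l$, induction shows that Theorem \ref{th:UI} applies at every level $n\geq l$. Collecting the isomorphisms it produces --- the $\eKQ_l\simeq\ebKQ_l$ assumed at the bottom, the $\eKQ_{n+1}\simeq\ebKQ_{n+1}$ coming out at each step, and the $\eU_n\simeq\ebU_n$, $\eV_n\simeq\ebV_n$ at each step --- yields $\eKQ_n\simeq\ebKQ_n$, $\eV_n\simeq\ebV_n$ and $\eU_n\simeq\ebU_n$ for all $n\geq l$ and both $\Ep=\pm1$, as claimed.

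Since all the homological work is already buried inside Theorem \ref{th:UI}, there is no serious obstacle; the only genuinely substantive point is the base case, where one must see that the two Hermitian hypotheses play different roles --- $\eKQ_l\simeq\ebKQ_l$ seeds the chain of $\eKQ$-comparisons, whereas $\eKQ_{l+1}\simeq\ebKQ_{l+1}$ is consumed to manufacture the Witt-group isomorphism $\eW_{l+1}\simeq\ebW_{l+1}$ that ignites the induction. I would also keep careful track of the two signs throughout: because the Fundamental Theorem of Hermitian $K$-theory couples $\eV_n$ for one sign with ${}_{-\Ep}U_{n+1}$ for the other, the induction must proceed for $\Ep=1$ and $-1$ in lockstep, which is automatic given that Theorem \ref{th:UI} is itself stated for both signs at once.
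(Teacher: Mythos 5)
Your proposal is correct and coincides with the paper's intended argument: the corollary is stated there without proof, as an immediate consequence of Theorem \ref{th:UI}, and your bootstrap --- run Theorem \ref{th:UI} at level $n$ and feed its outputs $\eKQ_{n+1}\simeq\ebKQ_{n+1}$ and $\eW_{n+2}\simeq\ebW_{n+2}$ back in as the hypotheses at level $n+1$ --- is exactly that derivation. Your base-case observation, that $\eW_{l+1}\simeq\ebW_{l+1}$ follows from the commuting square for the hyperbolic map $\eH_{l+1}\Cl K_{l+1}\to\eKQ_{l+1}$ whose vertical arrows $K_{l+1}\simeq\bK_{l+1}$ and $\eKQ_{l+1}\simeq\ebKQ_{l+1}$ are isomorphisms (so the induced map on cokernels is one too), supplies the single detail the paper leaves tacit.
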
\qed

\begin{theorem}[Downwards Induction]\label{th:DI}
Let us assume that
\[
K_n\simeq\bK_n,
 \quad K_{n-1}\simeq\bK_{n-1},
  \quad\eKQ_n\simeq\ebKQ_n,
   \aND\eW'_{n-1}\simeq\ebW'_{n-1}
\]
for both $\Ep=1$ and $-1$.
Then
\BE{DI1}
\eV_{n-1}\simeq\ebV_{n-1},
 \qquad\eU_{n-1}\simeq\ebU_{n-1},
  \qquad\eU_n\simeq\ebU_n,
   \qquad\eV_{n-2}\simeq\ebV_{n-2},
\EE
as well as
\BE{DI2}
\eKQ_{n-1}\simeq\ebKQ_{n-1}
 \AND\eW'_{n-2}\simeq\ebW'_{n-2}
\EE
for both $\Ep=1$ and $-1$.
\end{theorem}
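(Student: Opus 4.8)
The plan is to run the proof of Theorem~\ref{th:UI} ``in reverse'': I would exchange the roles of the two fibrations in \eqref{VKQK UKQK}, replace the Witt groups $\eW$ by the co-Witt groups $\eW'$, and reverse the direction of the degree shift (so that the indices descend through $n,\,n-1,\,n-2$ rather than ascend). As before, every isomorphism will be extracted, for both $\Ep=\pm1$ simultaneously, from the Five Lemma applied to a commutative ladder whose rows are portions of the long exact sequences \eqref{VKKQ le} and \eqref{UKKQ le} and whose vertical maps are induced by $f$. The essential structural change is that, whereas $\eW_m$ entered Theorem~\ref{th:UI} as the \emph{cokernel} of $\eH_m$ through a right-exact sequence, here $\eW'_m$ enters as the \emph{kernel} of $\eF_m$ through the left-exact sequence $0\to\eW'_m\to\eKQ_m\xrightarrow{\eF_m}K_m$ supplied by \eqref{VKKQ le} and \eqref{Ker Coker}.

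First I would establish $\eV_{n-1}\simeq\ebV_{n-1}$. Splicing \eqref{VKKQ le} at $\eW'_{n-1}$ gives the exact sequence $\eKQ_n\xrightarrow{\eF_n}K_n\to\eV_{n-1}\to\eW'_{n-1}\to0$, and the Five Lemma applied to the induced ladder, using $\eKQ_n\simeq\ebKQ_n$, $K_n\simeq\bK_n$ and $\eW'_{n-1}\simeq\ebW'_{n-1}$, yields the isomorphism. Next, dually to the surjectivity $\eKQ_{n+1}\onto\ebKQ_{n+1}$ of Theorem~\ref{th:UI}, I would prove the \emph{injectivity} $\eKQ_{n-1}\hookrightarrow\ebKQ_{n-1}$ by comparing the two copies of $0\to\eW'_{n-1}\to\eKQ_{n-1}\xrightarrow{\eF_{n-1}}K_{n-1}$: since $\eW'_{n-1}\simeq\ebW'_{n-1}$ and $K_{n-1}\simeq\bK_{n-1}$, a short chase forces $\eKQ_{n-1}\to\ebKQ_{n-1}$ to be injective. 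Feeding this into the ladder built from \eqref{UKKQ le} around $K_n\xrightarrow{\eH_n}\eKQ_n\to\eU_{n-1}\to K_{n-1}\xrightarrow{\eH_{n-1}}\eKQ_{n-1}$, with the hypotheses on $K_n$, $\eKQ_n$ and $K_{n-1}$, the Five Lemma then gives $\eU_{n-1}\simeq\ebU_{n-1}$.

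The remaining two isomorphisms of \eqref{DI1}, namely $\eU_n\simeq\ebU_n$ and $\eV_{n-2}\simeq\ebV_{n-2}$, then follow formally from $\eV_{n-1}\simeq\ebV_{n-1}$ and $\eU_{n-1}\simeq\ebU_{n-1}$ by invoking the Fundamental Theorem in the form $\eV_m\simeq{}_{-\Ep}U_{m+1}$ for both signs of $\Ep$. Finally, I would obtain \eqref{DI2} exactly as in the corresponding step of Theorem~\ref{th:UI}: the full isomorphism $\eKQ_{n-1}\simeq\ebKQ_{n-1}$, upgrading the injectivity above, comes from the Five Lemma applied to $K_n\to\eV_{n-1}\to\eKQ_{n-1}\xrightarrow{\eF_{n-1}}K_{n-1}\to\eV_{n-2}$ in \eqref{VKKQ le}, now that all of its neighbouring terms are known to be isomorphisms; and the degree-shifted sequence $\eKQ_{n-1}\xrightarrow{\eF_{n-1}}K_{n-1}\to\eV_{n-2}\to\eW'_{n-2}\to0$ then yields $\eW'_{n-2}\simeq\ebW'_{n-2}$.

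The main obstacle I anticipate is purely one of bookkeeping the direction of exactness. Because $\eW'$ is a kernel of $\eF$ rather than a cokernel of $\eH$, the pivotal early step---the injectivity of $\eKQ_{n-1}$---relies on the left-exact four-term sequence and the monomorphism version of the Five Lemma, which is precisely the opposite of what occurs in Theorem~\ref{th:UI}. Getting these variance conventions right, and keeping the $\Ep\leftrightarrow-\Ep$ switch of the Fundamental Theorem consistent across both signs when deducing \eqref{DI1}, are the only points that require genuine care; the diagram chases themselves are routine once the ladders are set up correctly.
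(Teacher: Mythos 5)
Your proposal is correct and matches the paper's proof essentially step for step: the same spliced four-term sequences $\eKQ_n\to K_n\to\eV_{n-1}\to\eW'_{n-1}\to0$ and $0\to\eW'_{n-1}\to\eKQ_{n-1}\xrightarrow{\eF_{n-1}}K_{n-1}$ give $\eV_{n-1}\simeq\ebV_{n-1}$ and the injectivity $\eKQ_{n-1}\into\ebKQ_{n-1}$ (the monomorphism half of the Five Lemma, exactly as you flag), the ladder from \eqref{UKKQ le} then gives $\eU_{n-1}\simeq\ebU_{n-1}$, and the Fundamental Theorem supplies the remaining two isomorphisms of \eqref{DI1} before two final Five Lemma applications yield \eqref{DI2}. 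The only cosmetic difference is that you extract $\eKQ_{n-1}\simeq\ebKQ_{n-1}$ directly from the segment $K_n\to\eV_{n-1}\to\eKQ_{n-1}\to K_{n-1}\to\eV_{n-2}$ of \eqref{VKKQ le}, while the paper writes that same comparison with the flanking terms expressed as $\eU$-groups via the Fundamental Theorem; the two ladders are identified by $\eV_m\simeq{}_{-\Ep}U_{m+1}$, so the arguments coincide.
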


\begin{proof} The proof is analogous. We provide the details
for the reader's convenience. In view of the hypothesis, the diagram
\[
\BD
    \node{K_{n-1}}\arrow{s,r}{\simeq}
   \node{\eKQ_{n-1}}\arrow{w}\arrow{s}
  \node{\eV_{n-1}}\arrow{w}\arrow{s}
 \node{K_n}\arrow{w}\arrow{s,r}{\simeq}
\node{\eKQ_n}\arrow{w}\arrow{s,r}{\simeq}\\
    \node{\bK_{n-1}}
   \node{\ebKQ_{n-1}}\arrow{w}
  \node{\ebV_{n-1}}\arrow{w}
 \node{\bK_n}\arrow{w}
\node{\ebKQ_n}\arrow{w}
\ED
\]
yields the diagrams
\[
\BD
    \node{0}
   \node{\eW'_{n-1}}\arrow{w}\arrow{s,r}{\simeq}
  \node{\eV_{n-1}}\arrow{w}\arrow{s}
 \node{K_n}\arrow{w}\arrow{s,r}{\simeq}
\node{\eKQ_n}\arrow{w}\arrow{s,r}{\simeq}\\
    \node{0}
   \node{\ebW'_{n-1}}\arrow{w}
  \node{\ebV_{n-1}}\arrow{w}
 \node{\bK_n}\arrow{w}
\node{\ebKQ_n}\arrow{w}
\ED
\]
and
\[
\BD
   \node{K_{n-1}}\arrow{s,r}{\simeq}
  \node{\eKQ_{n-1}}\arrow{w}\arrow{s}
 \node{\eW'_{n-1}}\arrow{w}\arrow{s,r}{\simeq}
\node{0}\arrow{w}\\
   \node{\bK_{n-1}}
  \node{\ebKQ_{n-1}}\arrow{w}
 \node{\ebW'_{n-1}}\arrow{w}
\node{0}\arrow{w}
\ED\;.
\]
The Five Lemma implies that $\eV_{n-1}\simeq\ebV_{n-1}$ and
$\eKQ_{n+1}\into\ebKQ_{n-1}$. By applying it again to the diagram
\[
\BD
    \node{\eKQ_{n-1}}\arrow{s,,V}
   \node{K_{n-1}}\arrow{w}\arrow{s,r}{\simeq}
  \node{\eU_{n-1}}\arrow{w}\arrow{s}
 \node{\eKQ_n}\arrow{w}\arrow{s,r}{\simeq}
\node{K_n}\arrow{w}\arrow{s,r}{\simeq}\\
    \node{\ebKQ_{n-1}}
   \node{\bK_{n-1}}\arrow{w}
  \node{\ebU_{n-1}}\arrow{w}
 \node{\ebKQ_n}\arrow{w}
\node{\bK_n}\arrow{w}
\ED
\]
we obtain $\eU_{n-1}\simeq\ebU_{n-1}$.

The remaining two isomorphisms in \eqref{DI1} follow from
the two proven ones in view of the Fundamental Theorem
of Hermitian $K$-theory.

The Five Lemma applied to the diagrams
\[
\BD
    \node{K_n}\arrow{s,r}{\simeq}
   \node{\eU_n}\arrow{w}\arrow{s,r}{\simeq}
  \node{\eKQ_{n-1}}\arrow{w}\arrow{s}
 \node{K_{n-1}}\arrow{w}\arrow{s,r}{\simeq}
\node{\eU_{n-1}}\arrow{w}\arrow{s,r}{\simeq}\\
    \node{\bK_n}
   \node{\ebU_n}\arrow{w}
  \node{\ebKQ_{n-1}}\arrow{w}
 \node{\bK_{n-1}}\arrow{w}
\node{\ebU_{n-1}}\arrow{w}
\ED
\]
and
\[
\BD
    \node{0}
   \node{\eW'_{n-2}}\arrow{w}\arrow{s}
  \node{\eV_{n-2}}\arrow{w}\arrow{s,r}{\simeq}
 \node{K_{n-1}}\arrow{w}\arrow{s,r}{\simeq}
\node{\eKQ_{n-1}}\arrow{w}\arrow{s,r}{\simeq}\\
    \node{0}
   \node{\ebW'_{n-2}}\arrow{w}
  \node{\ebV_{n-2}}\arrow{w}
 \node{\bK_{n-1}}\arrow{w}
\node{\ebKQ_{n-1}}\arrow{w}
\ED
\]
yields the two isomorphisms of \eqref{DI2}.
\end{proof}

\begin{cor}\label{cor:DI}
If
\[
\eKQ_{l}\simeq\ebKQ_{l}\AND
 \eKQ_{l-1}\simeq\ebKQ_{l-1},
\]
for a given $l$ and for both $\Ep=1$ and $-1$, and if
\[
  K_n\simeq\bK_n\qquad(n\leq l),
\]
then
\[
\eKQ_n\simeq\ebKQ_n,\qquad
 \eV_n\simeq\ebV_n,\AND
  \eU_n\simeq\ebU_n\qquad
\]
for $n\leq l$ and both $\Ep=1$ and $-1$.
\end{cor}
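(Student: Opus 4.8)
The plan is to prove the corollary by \emph{descending induction} on $n\leq l$, with Theorem \ref{th:DI} serving as the engine that lowers the degree by one at each step. The data I propagate downward is exactly the pair of inputs that theorem requires: for both $\Ep=\pm1$, the assertion that the maps induced by $f$,
\[
\eKQ_m\LA\ebKQ_m\AND\eW'_{m-1}\LA\ebW'_{m-1},
\]
are isomorphisms. Because the global hypothesis already gives $K_n\simeq\bK_n$ for all $n\leq l$, the two remaining inputs $K_m\simeq\bK_m$ and $K_{m-1}\simeq\bK_{m-1}$ are available whenever $m\leq l$, so each application of Theorem \ref{th:DI} at an index $m\leq l$ is legitimate.

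First I would establish the base case $m=l$. The hypothesis provides $\eKQ_l\simeq\ebKQ_l$ outright, so only $\eW'_{l-1}\simeq\ebW'_{l-1}$ must be produced. For this I would use $\eW'_{l-1}=\Ker\bigl(\eKQ_{l-1}\xrightarrow{\;\eF_{l-1}\;}K_{l-1}\bigr)$ together with naturality of the forgetful map with respect to $f$: in the commutative square comparing $\eF_{l-1}$ for $A$ and for $\bA$, both vertical arrows $\eKQ_{l-1}\to\ebKQ_{l-1}$ and $K_{l-1}\to\bK_{l-1}$ are isomorphisms (the former by hypothesis, the latter since $l-1\leq l$), whence the induced map on kernels is an isomorphism.

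With the invariant in place at $m=l$, I would apply Theorem \ref{th:DI} successively at $n=l,\,l-1,\,l-2,\dots$. Each application at index $m$ consumes $\eKQ_m$, $K_m$, $K_{m-1}$, $\eW'_{m-1}$ and returns, through \eqref{DI2}, the shifted invariant $\eKQ_{m-1}\simeq\ebKQ_{m-1}$ and $\eW'_{m-2}\simeq\ebW'_{m-2}$, together with the four isomorphisms of \eqref{DI1}, namely $\eV_{m-1}$, $\eV_{m-2}$, $\eU_{m-1}$, and $\eU_m$. Collecting these outputs over all $m\leq l$ yields $\eKQ_n\simeq\ebKQ_n$ and $\eU_n\simeq\ebU_n$ for every $n\leq l$, and $\eV_n\simeq\ebV_n$ for every $n\leq l-1$, in both parities; this accounts for the bulk of the statement.

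The step I expect to be the main obstacle is the remaining top-degree case $\eV_l\simeq\ebV_l$, the one conclusion that iterating Theorem \ref{th:DI} does \emph{not} deliver, since extracting a $\eV$-isomorphism in degree $l$ from that theorem would require comparison data in degree $l+1$ lying outside the hypotheses. From the long exact sequence \eqref{VKKQ le} in degree $l$ one reads off a short exact sequence
\[
0\LA\Coker\bigl(\eF_{l+1}\bigr)\LA\eV_l\LA\eW'_l\LA0,
\]
whose quotient is already under control---the kernel argument of the base case gives $\eW'_l\simeq\ebW'_l$---so the entire difficulty concentrates in the subgroup $\Coker(\eF_{l+1})$, a genuinely degree-$(l+1)$ quantity; equivalently, the Fundamental Theorem of Hermitian $K$-theory identifies $\eV_l\simeq{}_{-\Ep}U_{l+1}$, again a degree-$(l+1)$ group. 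This boundary term is the one place I would scrutinize carefully before declaring the induction complete, as it is not forced by the stated hypotheses in the direct way the lower-degree isomorphisms are.
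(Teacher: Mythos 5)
Your descending induction is exactly the argument the paper intends (the corollary is stated with no written proof, as an immediate iteration of Theorem \ref{th:DI}), and your base case is handled correctly: since $\eW'_{l-1}$ is the kernel of the forgetful map $\eF_{l-1}\Cl\eKQ_{l-1}\to K_{l-1}$, and both $\eKQ_{l-1}\to\ebKQ_{l-1}$ and $K_{l-1}\to\bK_{l-1}$ are isomorphisms by hypothesis, naturality of $\eF$ gives $\eW'_{l-1}\simeq\ebW'_{l-1}$ for both $\Ep=\pm1$, which is precisely the extra input Theorem \ref{th:DI} needs at the top level. Each application at index $m\leq l$ then regenerates the invariant one degree lower via \eqref{DI2}, and the outputs \eqref{DI1} accumulate to $\eKQ_n\simeq\ebKQ_n$ and $\eU_n\simeq\ebU_n$ for all $n\leq l$, and $\eV_n\simeq\ebV_n$ for all $n\leq l-1$, in both parities.

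Your worry about the remaining case $\eV_l$ is not excessive caution: the obstruction you isolate is genuine. The short exact sequence $0\to\Coker(\eF_{l+1})\to\eV_l\to\eW'_l\to0$ extracted from \eqref{VKKQ le} is correct, the quotient is controlled by the same kernel argument, but the subgroup $\Coker(\eF_{l+1})$ involves $K_{l+1}$ and $\eKQ_{l+1}$, which the hypotheses say nothing about; equivalently, the Fundamental Theorem identifies $\eV_l\simeq{}_{-\Ep}U_{l+1}$, a degree-$(l+1)$ group, and no five-lemma window available from the hypotheses covers it. So the corollary as printed overstates the $\eV$-range by one index---the honest conclusion is $\eV_n\simeq\ebV_n$ for $n\leq l-1$, unless one adds a degree-$(l+1)$ hypothesis such as $K_{l+1}\simeq\bK_{l+1}$. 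Note the symmetric phenomenon in Corollary \ref{cor:UI}: there the iteration of Theorem \ref{th:UI} actually \emph{produces} the extra isomorphism $\eV_{l-1}\simeq\ebV_{l-1}$ without claiming it, which strongly suggests the $\eV_l$ claim here is an off-by-one slip in the statement rather than a hidden argument you failed to find. The slip is harmless for the paper's uses of the corollary (in the downward Comparison Induction feeding Theorem \ref{K KQ comp}, via Proposition \ref{pr:W'}, only the $\eKQ_n$ conclusions in the stated range are consumed), but your instinct to withhold the final case was the right call: it is not provable from the stated hypotheses.
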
\qed

\subsection*{Nonunital and topological variants}
Suppose $(A,\a)$ is a nonunital $(k,\ph)$-algebra
with anti-involution and $(\bA,\FineBar{1.5mu}{0.5mu}\a)$ is
a nonunital $(\bar k,\bar\ph)$-algebra with
anti-involution, and $(f,f_k)$ is a corresponding
homomorphism, i.e.,\ a pair of homomorphisms
of rings with anti-involution $f\Cl A\to\bA$
and $f_k\Cl k\to\bar k$ such that
\[
f(ca)=f_k(c)f(a)\qquad(c\in k;\;a\in A).
\]
Theorems \ref{th:UI} and \ref{th:DI} hold
if
\[
K_n=K_n(A)_\kf,\qquad\eKQ_n=\eKQ_n(A)_\kf,
 \qquad\eV_n=\eV_n(A)_\kf,\qquad\dots,
\]
and
\[
\bK_n=K_n(\bA)_\kf,
 \qquad\ebKQ_n=\ebKQ_n(\bA)_\kf,
  \qquad\ebV_n=\eV_n(\bA)_\kf,\qquad\dots,
\]
and the maps between $K$-groups, $\eKQ$-groups,
$\eV$-groups, etc., are all assumed to be induced by $(f,f_k)$.
We shall refer to these theorems as
the \emph{Nonunital Homomorphism Induction Theorems}
in order to distinguish them from the original
formulations of Theorems \ref{th:UI} and
\ref{th:DI} which we shall
refer to as
the \emph{Unital Homomorphism Induction Theorems}.
They occur as special cases of more general theorems
involving \emph{relative} $K$-groups, $\eKQ$-groups, $\eV$-groups,
etc.
Properly introducing the corresponding relative versions
of all the objects would significantly increase
the size of this chapter while it is not needed for
the results in the present article.

\smallskip
Then there are obvious versions of both pairs of induction
theorems for Banach algebras and topological $K$-theory.
We shall treat them as a single pair of theorems
acknowledging the fact that $K_{\ast}^\t(\ )_F$ and
$\eKQ_{\ast}^\t(\ )_{F,\ph}$\,, and the functors derived from
them, do not depend on the $(F,\ph)$-algebra structure,
as was already pointed out on p.\,\pageref{not depend}.
We shall refer to this pair as the \emph{Topological}
(or, \emph{Banach}) \emph{Induction Theorems}.

This produces six theorems (plus six corollaries) in total.
Now, one can replace the ordinary homotopy groups by
the homotopy groups with coefficients, e.g.,
finite or rational. This will produce the
corresponding versions ``with coefficients''
of all those theorems and their corollaries.
Several such results have been previously employed,
cf.\ e.g.,\ \cite{Berrick-Karoubi} and \cite{Battikh}.
In the next section we will encounter yet another variety:
the \emph{Comparison Induction Theorems} for Banach algebras.

\bigskip
\section{The comparison map in Hermitian $K$-theory}
There are functorial comparison maps
for nonunital Banach algebras,
\BE{comp KQ}
\eKQ_{\ast}(A)_\kf\LA\eKQ_{\ast}^\t(A),
\EE
and similar comparison maps for
$\eV_{\!\ast}$, $\eU_{\ast}$, $\eW_{\ast}$ and $\eW_{\ast}'$,
where $k\sbeq F$ denotes any subring of
the ground field and $\ph=\id$ unless $F=\C$
and the anti-involution on $A$ is sesquilinear.
In the last case $k$ is supposed to be a subring of
$\C$ invariant under complex conjugation
and $\ph$ is complex conjugation restricted to $k$.

We shall study those maps in relation to the
comparison maps in algebraic $K$-theory,
\BE{comp K}
K_n(A)_k\LA K_n^\t(A),
\EE
using yet another variety of Induction Theorems.
In the \emph{Comparison Induction Theorems} only
a single Banach algebra $A$ is present,
and the groups in \eqref{bKQVUW} are the topological
counterparts of the groups in \eqref{KQVUW}.
The comparison maps of \eqref{comp K} play
the role of the maps $K_n\to\bK_n$,
the comparison maps of \eqref{comp KQ} play
the role of the maps $\eKQ_n\to\ebKQ_n$,
etc.

The following theorem is a direct consequence
of Proposition \ref{pr:W'} combined with
the Comparison Induction Theorems.

\begin{theorem}\label{K KQ comp}
Let $A$ be a Banach algebra with anti-involution.
If the comparison maps of \eqref{comp K} are
isomorphisms in the range $0<n\leq n'$ \textup(respectively, in
the range $n''\leq n<0$\textup), then the comparison maps
of \eqref{comp KQ} are isomorphisms in precisely
the same range.
\end{theorem}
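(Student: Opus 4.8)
The plan is to read the statement as a direct application of the Comparison Induction Theorems, i.e.\ the forms of Theorems \ref{th:UI} and \ref{th:DI} in which a single Banach algebra $A$ is present, the maps $K_n\to\bK_n$ are the algebraic comparison maps \eqref{comp K}, the maps $\eKQ_n\to\ebKQ_n$ are the Hermitian comparison maps \eqref{comp KQ}, and every barred group is the corresponding topological group. The whole argument is then an induction outward from $n=0$, with Proposition \ref{pr:W'} supplying exactly the Witt- and co-Witt-group input that these theorems demand. Throughout, an abbreviation such as $\eKQ_m\simeq\eKQ_m^\t$ stands for the assertion that the comparison map $\eKQ_m(A)_\kf\to\eKQ_m^\t(A)$ is an isomorphism, for both $\Ep=\pm1$.

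For the positive range $0<n\le n'$ (we may assume $n'\ge1$, else there is nothing to prove) I would argue upwards. The base sits at $n=0$, where one has $K_0(A)_k\simeq K_0^\t(A)$ and $\eKQ_0(A)_\kf\simeq\eKQ_0^\t(A)$ because algebraic and topological $K_0$, and likewise Hermitian $K_0$, coincide for Banach algebras (the classical degree-$0$ identifications, obtained through the unitalization in the nonunital case); moreover $K_1(A)_k\simeq K_1^\t(A)$ by hypothesis, and $\eW_1(A)_{F,\ph}\simeq\eW_1^\t(A)$ for both $\Ep=\pm1$ by Proposition \ref{pr:W'}. These are precisely the four hypotheses of the Upwards Induction Theorem \ref{th:UI} at $n=0$, which therefore yields $\eKQ_1\simeq\eKQ_1^\t$ together with $\eW_2\simeq\eW_2^\t$. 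Feeding these back in alongside $K_1\simeq K_1^\t$ and $K_2\simeq K_2^\t$ (both from the hypothesis), Theorem \ref{th:UI} at $n=1$ produces $\eKQ_2\simeq\eKQ_2^\t$ and $\eW_3\simeq\eW_3^\t$. Iterating, at the $m$-th step ($1\le m<n'$) the inductively available pair $\eKQ_m\simeq\eKQ_m^\t$, $\eW_{m+1}\simeq\eW_{m+1}^\t$ together with the hypothesis isomorphisms $K_m\simeq K_m^\t$ and $K_{m+1}\simeq K_{m+1}^\t$ advances the conclusion to degree $m+1$. After $n'$ steps this gives $\eKQ_n(A)_\kf\simeq\eKQ_n^\t(A)$ for every $0\le n\le n'$, hence throughout the asserted range.

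The negative range $n''\le n<0$ is handled symmetrically by running the Downwards Induction Theorem \ref{th:DI} from $n=0$ toward $-\infty$, with the co-Witt group replacing the Witt group. Here the seed is the isomorphism $\eW'_{-1}(A)_{F,\ph}\simeq\eW^{\prime\,\t}_{-1}(A)$ of Proposition \ref{pr:W'} (again for both $\Ep=\pm1$), which is exactly the hypothesis $\eW'_{n-1}\simeq\ebW'_{n-1}$ of Theorem \ref{th:DI} at $n=0$. Combined with the degree-$0$ identifications of $K_0$ and $\eKQ_0$ and with $K_{-1}\simeq K_{-1}^\t$ from the hypothesis, Theorem \ref{th:DI} descends the conclusion one degree to $\eKQ_{-1}\simeq\eKQ_{-1}^\t$, $\eW'_{-2}\simeq\eW^{\prime\,\t}_{-2}$; iterating down to $n''$ yields $\eKQ_n(A)_\kf\simeq\eKQ_n^\t(A)$ for all $n''\le n\le0$. (The accompanying $\eV$- and $\eU$-isomorphisms also fall out, but are not needed here.)

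The one point that genuinely requires care---and where I expect the only friction---is the base at $n=0$. First, one must invoke the degree-$0$ coincidence of algebraic and topological (Hermitian) $K_0$ for possibly nonunital Banach algebras. Second, one must reconcile the subscript on the Witt-group seed of Proposition \ref{pr:W'}, which is stated for $(F,\ph)$, with the $(k,\ph)$ bookkeeping carried through the induction; I would argue either that $\eW_1$ and $\eW'_{-1}$ are insensitive to the choice of $k\sbeq F$ in these degrees, or that the proof of Proposition \ref{pr:W'} applies verbatim with $k$ in place of $F$. Both matters are standard, but they are the real substance hidden behind the phrase ``direct consequence''; once the $n=0$ input is secured, the remainder is the purely formal propagation above, driven entirely by the Five Lemma arguments already internal to Theorems \ref{th:UI} and \ref{th:DI}.
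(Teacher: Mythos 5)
Your proposal is correct and is precisely the argument the paper intends: the paper proves this theorem in a single sentence (``a direct consequence of Proposition \ref{pr:W'} combined with the Comparison Induction Theorems''), and your write-up spells out exactly that deduction --- upward induction via Theorem \ref{th:UI} seeded by the $\eW_1$ isomorphism, downward via Theorem \ref{th:DI} seeded by the $\eW'_{-1}$ isomorphism, with the degree-$0$ identifications as base. Your flagged base-case points (degree-$0$ comparison for nonunital algebras, and the $(k,\ph)$ versus $(F,\ph)$ bookkeeping in Proposition \ref{pr:W'}) are legitimate details the paper likewise leaves implicit, and your proposed resolutions are the standard ones.
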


Equipped with Theorem \ref{K KQ comp} we deduce the following
result from Theorem \ref{th:n<0}.

\begin{theorem}\label{th:n<0 KQ}
If $A$ is a stable retract of
a Banach $\Kr$ or $\Kc$-ring,
then, for any anti-involution $\a$ on $A$, the comparison
maps in Hermitian $K$-theory, \eqref{comp KQ},
are isomorphisms for $n\leq0$.\qed
\end{theorem}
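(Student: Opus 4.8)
The plan is to derive the statement from the algebraic $K_n$-stability of Theorem \ref{th:n<0} by feeding it through the Comparison Induction machinery, treating the strictly negative degrees and the boundary degree $n=0$ separately.

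First I would observe that Theorem \ref{th:n<0} makes the algebraic comparison maps \eqref{comp K} isomorphisms for every $n\leq0$; in particular they are isomorphisms throughout each range $n''\leq n<0$. The negative-range case of Theorem \ref{K KQ comp} then shows that the Hermitian comparison maps \eqref{comp KQ} are isomorphisms for all $n<0$ and both $\Ep=\pm1$. Because the comparison maps for $\eKQ_{-1}$ and $\eKQ_{-2}$ are now isomorphisms for both signs, Corollary \ref{cor:DI} (applied with $l=-1$) upgrades this to the statement that the comparison maps for $\eV_n$ and $\eU_n$ are isomorphisms as well, for all $n\leq-1$ and both $\Ep=\pm1$.

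The remaining degree $n=0$ is the crux, and I expect it to be the main obstacle: degree $0$ lies on the boundary of the range handled by Theorem \ref{K KQ comp}, so it is not reached by the downward induction. To cross it I would invoke the Fundamental Theorem of Hermitian $K$-theory, which supplies natural isomorphisms ${}_{\Ep}U_0\simeq{}_{-\Ep}V_{-1}$ compatible with the comparison maps. Since ${}_{-\Ep}V_{-1}$ lives in degree $-1$, its comparison map is an isomorphism by the previous step, and therefore so is the comparison map for $\eU_0$, for both signs. I would then apply the Five Lemma to the map of long exact sequences \eqref{UKKQ le} along the five-term window
\[
\eU_0 \LA K_0 \LA \eKQ_0 \LA \eU_{-1} \LA K_{-1}.
\]
Here the comparison maps on $K_0$ and $K_{-1}$ are isomorphisms by Theorem \ref{th:n<0}, and those on $\eU_0$ and $\eU_{-1}$ are isomorphisms by the Fundamental Theorem and the negative-range result; the Five Lemma thus forces the comparison map on $\eKQ_0$ to be an isomorphism too, for both $\Ep=\pm1$.

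Combining the two cases gives that \eqref{comp KQ} is an isomorphism for all $n\leq0$, as asserted. (Alternatively, once $\eKQ_0$ is in hand one may feed it together with $\eKQ_{-1}$ into Corollary \ref{cor:DI} with $l=0$ and obtain the whole range in a single application.) The entire difficulty is concentrated at the boundary degree $0$, and the argument hinges on the Fundamental Theorem, which reinterprets the degree-$0$ group $\eU_0$ as the degree-$(-1)$ group ${}_{-\Ep}V_{-1}$ already lying inside the controlled range.
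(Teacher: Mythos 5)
Your argument is correct, and for strictly negative degrees it is exactly the paper's deduction: Theorem \ref{th:n<0} gives the algebraic comparison isomorphisms \eqref{comp K} in all degrees $n\leq0$, and the negative-range case of Theorem \ref{K KQ comp} then yields \eqref{comp KQ} for all $n<0$ and both $\Ep=\pm1$. Where you genuinely part ways with the paper is at the boundary degree $0$, which you single out as the crux; in the paper it is not a crux but an elementary fact that the terse proof leaves implicit: the map $\eKQ_0(A)_{\kf}\to\eKQ_0^\t(A)$ is an isomorphism for \emph{every} Banach algebra with anti-involution --- the Hermitian analogue of the statement that every Banach algebra is $K_0$-stable --- since both sides are computed by the Grothendieck group of $\Ep$-Hermitian forms on finitely generated projective modules. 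Indeed, this degree-$0$ isomorphism, together with Proposition \ref{pr:W'}, is the \emph{seed} of the downward Comparison Induction by which Theorem \ref{K KQ comp} is itself proved, so it is available as an input rather than something to be recovered afterwards. Your substitute --- transporting the degree-$(-1)$ isomorphisms across the Fundamental Theorem via ${}_{\Ep}U_0\simeq{}_{-\Ep}V_{-1}$ (the comparison compatibility you need is precisely the homotopy-commutative square of spectra stated with the Fundamental Theorem, and it survives unitalization by \eqref{Ker Coker}), then running the Five Lemma on the ladder between \eqref{UKKQ le} and its topological counterpart --- is sound, and has the modest virtue of \emph{deriving} the degree-$0$ Hermitian comparison isomorphism from the negative range instead of quoting it; it is, however, considerably heavier than the one-line observation the paper relies on. One notational slip: in your displayed five-term window the arrows are reversed relative to the maps you actually use; with the paper's leftward-arrow convention it should read $K_{-1}\LLA\eU_{-1}\LLA\eKQ_0\LLA K_0\LLA\eU_0$, i.e., the maps run $\eU_0\to K_0\to\eKQ_0\to\eU_{-1}\to K_{-1}$, which is the exact segment your Five Lemma step requires, with the outer comparison maps isomorphisms by Theorem \ref{th:n<0} and the $\eU$-comparisons supplied by your Fundamental Theorem step and by Corollary \ref{cor:DI} at $l=-1$.
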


The next result is similarly deduced from Theorems
\ref{th:KCst} and \ref{th:KRst}

\begin{theorem}\label{th:st KQ}
If $A$ is a stable retract of
a Banach $\Kr$ or $\Kc$-ring that is $H$-unital over $\Q$,
then, for any anti-involution $\a$ on $A$, the comparison
maps in Hermitian $K$-theory, \eqref{comp KQ},
are isomorphisms for $n\in\Z$.\qed
\end{theorem}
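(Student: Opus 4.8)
The plan is to combine the full algebraic comparison statements of Theorems \ref{th:KCst} and \ref{th:KRst} with the Comparison Induction Theorems packaged in Theorem \ref{K KQ comp}, in exactly the manner that Theorem \ref{th:n<0 KQ} was extracted from Theorem \ref{th:n<0}. First I would dispose of the degrees $n\leq0$ at once by citing Theorem \ref{th:n<0 KQ}: its hypothesis, that $A$ be a stable retract of a Banach $\Kr$- or $\Kc$-ring, is weaker than the present one, so the comparison maps \eqref{comp KQ} are already known to be isomorphisms for all $n\leq0$ and for every anti-involution $\a$. This also settles the single degree $n=0$, which the Comparison Induction Theorems themselves do not reach.

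For the positive degrees the required input is the algebraic comparison result in \emph{all} degrees. By hypothesis $A$ is a stable retract of a Banach $\Kr$- or $\Kc$-ring that is $H$-unital over $\Q$; Theorem \ref{th:KCst} in the complex case and Theorem \ref{th:KRst} in the real case, together with the fact recorded in Section \ref{s:CF} that a stable retract of a $K_n$-stable algebra is again $K_n$-stable, then guarantee that the algebraic comparison maps \eqref{comp K} are isomorphisms for every $n\in\Z$. In particular, for an arbitrary $n'>0$ they are isomorphisms throughout the range $0<n\leq n'$.

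It remains to run the induction. Applying Theorem \ref{K KQ comp} to $A$ equipped with the given anti-involution $\a$, the fact that \eqref{comp K} is an isomorphism for $0<n\leq n'$ yields that the Hermitian comparison maps \eqref{comp KQ} are isomorphisms in precisely the same range $0<n\leq n'$, for both $\Ep=1$ and $\Ep=-1$. Since $n'$ was arbitrary, \eqref{comp KQ} is an isomorphism for all $n>0$. Combining this with the range $n\leq0$ settled above gives the isomorphism in every degree $n\in\Z$, completing the argument.

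I do not expect a genuine obstacle here, as the result is a formal consequence of the theorems already in place. The one point requiring care is the bookkeeping at the boundary: Theorem \ref{K KQ comp} is formulated only for the strictly positive (respectively strictly negative) ranges, so degree $0$ must be supplied separately. This is why the appeal to Theorem \ref{th:n<0 KQ}, which covers the whole range $n\leq0$ including $n=0$, is essential rather than merely convenient, and why the two inputs must be dovetailed rather than invoked in isolation.
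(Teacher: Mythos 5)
Your proof is correct and takes essentially the same route as the paper, which deduces Theorem \ref{th:st KQ} by feeding the all-degree algebraic comparison isomorphisms of Theorems \ref{th:KCst} and \ref{th:KRst} (extended to stable retracts via the remark in Section \ref{s:CF}) into the Comparison Induction machinery of Theorem \ref{K KQ comp}, with degrees $n\leq0$ supplied by Theorem \ref{th:n<0 KQ}. Your explicit handling of the boundary degree $n=0$, which Theorem \ref{K KQ comp} does not reach, is precisely the bookkeeping the paper's terse ``similarly deduced'' leaves implicit.
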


\smallskip
The case of a complex stable $C^{\ast}$-algebra was
deduced from the results of \cite{Suslin-Wodzicki}
by Battikh \cite{Battikh}.

\bigskip
All rings satisfy excision in $K$-theory in degrees
less than or equal 0, while rings satisfying
the hypothesis of Theorem \ref{th:st KQ}
satisfy excision in all degrees. In the next section we prove
that such rings satisfy excision also in Hermitian $K$-theory
which explains why the groups $\eKQ_{\ast}(A)_\kf$ are,
according to Theorem \ref{th:st KQ}, all isomorphic
to each other. 

\bigskip
\section{Relative $\eKQ$-groups and Excision}

\smallskip
If $A\sbeq R$ is a two-sided ideal in a unital
ring with anti-involution $\a$ and $A$ is $\a$-invariant,
then the relative $\eKQ$-groups are defined in terms of
the homotopy fiber $\mathbf F(R,A)$
of the morphism of $\Om$-spectra,
\[
\esKQ(R) \LA \esKQ(R/A)),
\]
induced by the quotient homomorphism
$R\to R/A$.
The relative $\eKQ$-groups,
\BE{KRA}
\eKQ_n(R,A):=\pi_n(\mathbf F(R,A))\qquad(n\in\Z),
\EE
become functors on the category of triples $(R,A,\a)$.
One has
\[
\eKQ_n(R,A)=\pi_n(F(R,A))\qquad(n>0)
\]
where $F(R,A)$ denotes the homotopy fiber of the map
\[
\BeO(R)^+\LA\BeO(R/A)^+.
\]
The connected component of $F(R,A)$ is naturally
identified with $\bF(R,A)$, the homotopy fiber
of
\[
\BeO(R)^+\LA\BebO(R/A)^+,
\]
where $\ebO(R/A)$ denotes the image of $\eO(R)$ in
$\eO(R/A)$ and
\[
\pi_0(F(R,A))=\Coker\bigl(\eKQ_1(R)\LA\eKQ_1(R/A)\bigr)
 \simeq\Ker\bigl(\eKQ_0(R,A)\LA\eKQ_0(R)\bigr),
\]
while the homotopy equivalence
\[
F(R,A)\;\sim\;\Om\FE(SR,SA),
\]
where $\FE(R,A)$ is simultaneously
the homotopy fiber of the map
\[
\BeE(R)^+\LA\BeE(R/A)^+,
\]
and a covering of $\bF(R,A)$,
induces suspension isomorphisms
\[
\pi_n(R,A)\;\simeq\;\pi_{n+1}(SR,SA)\qquad(n>0).
\]

\bigskip
For any homomorphism of unital rings with anti-involution
$f\Cl R\to R'$, let us consider, following \cite{Wagoner},
the ring $\G(f)$ defined by the pull-back diagram
\BE{G sq}
\BD
\node{CR'}\arrow{s,,A}
 \node{\G(f)}\arrow{w,,..}\arrow{s,,..}\\
\node{SR'}
 \node{SR}\arrow{w,,A}
\ED\,.
\EE
The diagram in \eqref{G sq} induces the following
commutative diagram of group
extensions
\[
\BD
\node{\eO(MR')}\arrow{s,,V}
 \node{\eO(MR')}\arrow{w,,=}\arrow{s,,V}\\
\node{\eO(CR')}\arrow{s,A}
 \node{\eO(\G(f))}\arrow{w,}\arrow{s,,A}\\
\node{\eE(SR')}
 \node{\ebO(SR)}\arrow{w,}
\ED\]
where $\ebO(SR)$ is the image of $\eO(\G(f))$ in $\eO(SR)$.
The group $\eO(\G(f))$ acts trivially on the homology of
$\eO(MR')$ because the action factorizes through the
action of $\eO(CR')$ which is trivial, as we pointed
above. It follows that
\[
\BD
\node{\BeE(SR')^+}
 \node{\BebO(SR)^+}\arrow{w}
  \node{\BeO(\G(f))^+}\arrow{w}
   \node{\BeO(R')^+}\arrow{w}
\ED
\]
is an initial segment of the Puppe sequence of
homotopy fibrations.

\medskip
In the special case when $f\Cl R\to R/A$ is the
quotient homomorphism, we obtain the natural map
\BE{FRA->GRA}
\BD
\node{F(R,A)}\arrow{e,t,}\sim
 \node{\Om\FE(SR,SA)}\arrow{e}
  \node{\Om\BebO(\G(R,A))^+}
\ED
\EE
where $\G(R,A):=\G(f)$.  It induces
isomorphism on $\pi_n$, for $n>0$, and is
injective on $\pi_0$. In particular,
a morphism $\phi\Cl(R,A)\to(R',A')$
induces an isomorphism of relative
$\eKQ$-groups for a particular $n>0$,
\BE{KQ RA}
\eKQ_n(R,A)\;\simeq\;\eKQ_n(R',A'),
\EE
if and only if
\BE{KQ GRA}
\eKQ_{n+1}(\G(R,A))\;\simeq\;\eKQ_{n+1}(\G(R',A')).
\EE

All the arguments in this section have
exact counterparts for the general linear
group $\GL$, the group of elementary
matrices $\E$, and algebraic $K$-theory.
In particular,
\BE{K RA}
K_n(R,A)\;\simeq\;K_n(R',A'),
\EE
for a given $n>0$, if and only if
\BE{K GRA}
K_{n+1}(\G(R,A))\;\simeq\;K_{n+1}(\G(R',A')).
\EE

\medskip
Recall that a nonunital ring $A$ is said
to satisfy excision for $K_n$ if
any morphism
\BE{exc}
\phi\Cl(R,A)\LA(R',A)
\EE
which on $A$ restricts to the identity map,
induces an isomorphism on $K_n$
\BE{Kn exc}
\BD
\node{K_n(R,A)}\arrow{e,tb}{K_n\phi}{\sim}\node{K_n(R',A)}
\ED\,.
\EE
In the same way, one can define excision for
any other functor on the category of pairs $(R,A)$.
Analogously, one defines excision for nonunital
rings with anti-involution and $\eKQ$.

According to Bass' Excision Theorem
\cite[Sections VII.6 and XII.8]{Bassbook}, every
ring satisfies excision for $K_n$ and $n\leq0$. A similar
result holds for nonunital rings with anti-involution
\cite[Section 3, Théorème 4.1]{KaroubiVillamayor}.
The following result demonstrates usefulness of
the Induction Theorems.

\begin{theorem}[cf.\ \cite{BattikhCR}, \cite{Battikh}]
If a ring $A$ satisfies excision for $K_n$, $n\leq n'$,
then, for any anti-involution on $A$ which admits
a split unitalization, $(A,\a)$ satisfies excision
in $\eKQ_n$ in the same range $n\leq n'$.
\end{theorem}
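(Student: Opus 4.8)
The plan is to reduce the statement, by means of the ring $\G(f)$ of the pull-back square \eqref{G sq}, to an absolute isomorphism statement about a single homomorphism, and then to feed that statement into the Upwards Induction Theorem. First I would dispose of the non-positive range: for $n\leq0$ every split nonunital ring with anti-involution already satisfies excision in $\eKQ_n$ by the Hermitian analogue of Bass' theorem \cite[Section 3, Théorème 4.1]{KaroubiVillamayor}, so the content lies entirely in the range $0<n\leq n'$, and there is nothing to prove unless $n'\geq1$. Now fix an \emph{arbitrary} morphism $\phi\Cl(R,A)\to(R',A)$ restricting to the identity on $A$, and form the induced homomorphism $g\Cl\G(R,A)\to\G(R',A)$ of the rings with induced (split) anti-involution attached, via \eqref{G sq}, to the quotient maps $R\to R/A$ and $R'\to R'/A$. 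By the equivalence \eqref{KQ RA}$\,\Leftrightarrow\,$\eqref{KQ GRA}, the map $\eKQ_n(R,A)\to\eKQ_n(R',A)$ induced by $\phi$ is an isomorphism in a degree $0<n\leq n'$ precisely when $g$ is an isomorphism on $\eKQ_{n+1}$; hence it suffices to prove that $g$ is an isomorphism on $\eKQ_m$ for $2\leq m\leq n'+1$ and for both $\Ep=\pm1$.

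Adopt for $g$ the bookkeeping of Theorems \ref{th:UI}--\ref{th:DI}, writing $K_m,\eKQ_m,\eW_m,\dots$ for the invariants of $\G(R,A)$ and $\bK_m,\ebKQ_m,\ebW_m,\dots$ for those of $\G(R',A)$, all maps being induced by $g$. Combining the hypothesis with the unconditional excision of \cite[Sections VII.6 and XII.8]{Bassbook} in degrees $\leq0$, the morphism $\phi$ induces an isomorphism on $K_n(R,A)$ for \emph{every} $n\leq n'$. Reading the fibration underlying \eqref{K RA}$\,\Leftrightarrow\,$\eqref{K GRA} at the level of spectra, so that $K_m(\G(R,A))$ is identified functorially with $K_{m-1}(R,A)$ in every degree, this translates into $K_m\simeq\bK_m$ for all $m\leq n'+1$. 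The same spectrum-level reading of the $\eKQ$-fibration, together with the unconditional excision for $\eKQ_n$ in degrees $\leq0$, gives $\eKQ_m\simeq\ebKQ_m$ for all $m\leq1$. In particular one has $K_0\simeq\bK_0$, $K_1\simeq\bK_1$, $\eKQ_0\simeq\ebKQ_0$ and $\eKQ_1\simeq\ebKQ_1$, whence, by functoriality of the cokernel $\eW_1=\Coker(\eH_1\Cl K_1\to\eKQ_1)$, also $\eW_1\simeq\ebW_1$, for both values of $\Ep$.

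These are exactly the hypotheses of the Upwards Induction Theorem (Theorem \ref{th:UI}) at degree $n=0$; its conclusion returns $\eKQ_1\simeq\ebKQ_1$ (already known) and, crucially, $\eW_2\simeq\ebW_2$. I would then iterate: once $\eKQ_n\simeq\ebKQ_n$ and $\eW_{n+1}\simeq\ebW_{n+1}$ are in hand for both $\Ep$, and since $K_n\simeq\bK_n$ and $K_{n+1}\simeq\bK_{n+1}$ are available throughout the range $n\leq n'$, a further application of Theorem \ref{th:UI} at degree $n$ yields $\eKQ_{n+1}\simeq\ebKQ_{n+1}$ and $\eW_{n+2}\simeq\ebW_{n+2}$. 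Running this for $n=0,1,\dots,n'$ shows that $\eKQ_m\simeq\ebKQ_m$ for $1\leq m\leq n'+1$, in particular on the required range $2\leq m\leq n'+1$. Translating back through \eqref{KQ GRA}$\,\Leftrightarrow\,$\eqref{KQ RA} makes $\eKQ_n(R,A)\to\eKQ_n(R',A)$ an isomorphism for $0<n\leq n'$; since $\phi$ was arbitrary, this is excision for $\eKQ_n$ in that range, which together with the non-positive range settles the theorem.

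The step I expect to be the main obstacle is the passage to non-positive degrees in the second paragraph. The equivalences \eqref{K RA}$\,\Leftrightarrow\,$\eqref{K GRA} and \eqref{KQ RA}$\,\Leftrightarrow\,$\eqref{KQ GRA} are recorded only for $n>0$, whereas the base case of the induction rests on the behaviour of $K_m(\G(R,A))$ and $\eKQ_m(\G(R,A))$ in degrees $m\leq1$. Securing those low-degree isomorphisms means reading the defining homotopy fibrations at the level of spectra rather than of the plus-constructions $\BeO(\,\cdot\,)^+$, and checking that the resulting degree shift $\eKQ_m(\G(R,A))\simeq\eKQ_{m-1}(R,A)$ is functorial and compatible with the unconditional negative-degree excision of Karoubi--Villamayor. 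A secondary point is that $\G(f)$, being assembled from cones and suspensions, must be seen to inherit a split anti-involution, so that the Fundamental Theorem of Hermitian $K$-theory---used inside Theorem \ref{th:UI} to trade the groups $\eV$ for $\eU$---applies to $g$; for this one invokes the nonunital form of the Induction Theorems rather than the unital one stated above.
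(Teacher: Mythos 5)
Your proposal reproduces the paper's argument in all essentials: reduction of relative excision to an absolute statement about $\G\phi\Cl\G(R,A)\to\G(R',A)$ through the equivalences \eqref{K RA}$\,\Leftrightarrow\,$\eqref{K GRA} and \eqref{KQ RA}$\,\Leftrightarrow\,$\eqref{KQ GRA}, a base of low-degree isomorphisms supplied by Bass and Karoubi--Villamayor, an upward run of Theorem \ref{th:UI} from $n=0$ through $n=n'$ (with $\eW_1\simeq\ebW_1$ extracted from the $K_1$- and $\eKQ_1$-isomorphisms exactly as you do), and translation back via \eqref{KQ GRA}$\,\Leftrightarrow\,$\eqref{KQ RA}.

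The one divergence is the step you yourself flag as the main obstacle, and there the paper is both simpler and safer than your proposed spectrum-level fix. Your blanket identification of $K_m(\G(R,A))$ with $K_{m-1}(R,A)$ ``in every degree'' is not available unconditionally: what holds in all degrees is only $K_m(\G(R,A))\simeq K_m(\G(R,A),SA)$, coming from the extension $SA\into\G(R,A)\onto C(R/A)$ whose cone-ring quotient has vanishing $K$- and $\eKQ$-groups; the further passage from the pair $(\G(R,A),SA)$ to $(SR,SA)$, and hence to $K_{m-1}(R,A)$, is itself an instance of excision for $SA$ in degree $m$---i.e., of the very property under investigation, shifted one suspension up. The paper therefore never performs the degree shift in low degrees: the commutative diagram of extensions with common ideal $SA$ produces the squares \eqref{Kn sq} and \eqref{KQn sq}, which replace the absolute groups of $\G(R,A)$ by relative groups rel $SA$, and since $\G\phi$ restricts to the identity on $SA$, Bass' theorem \cite{Bassbook} and \cite[Section 3, Théorème 4.1]{KaroubiVillamayor} apply verbatim to the right-hand vertical arrows in degrees $n\leq0$; no plus-construction or $\pi_0$-boundary analysis (the reason \eqref{KQ RA}$\,\Leftrightarrow\,$\eqref{KQ GRA} is stated only for $n>0$) is ever needed. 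Your shift is salvageable in precisely the range in which you use it---excision for $SA$ in degree $m$ amounts to excision for $A$ in degree $m-1$, which holds for $m\leq n'+1$ in $K$-theory by the hypothesis together with Bass, and for $m\leq1$ in $\eKQ$-theory by Karoubi--Villamayor---so your proof does close; but the paper's squares make this bookkeeping automatic and render your flagged obstacle moot. Your secondary worry is likewise unproblematic: $\G(f)$ is a unital pullback of $CR'$ and $SR$ carrying the induced anti-involution, and a central splitting element $\l$ of the unitalization maps to central splitting elements there, so the Fundamental Theorem applies to $\G\phi$ in its unital form.
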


\begin{proof}
Consider the commutative diagram of ring extensions
\[
\BD
\node{C(R/A)}\arrow{s}
 \node{\G(R,A)}\arrow{w,,A}\arrow{s,r}{\G\phi}
  \node{SA}\arrow{w,,V}\arrow{s,,=}\\
\node{C(R'/A)}
 \node{\G(R',A)}\arrow{w,,A}
  \node{SA}\arrow{w,,V}
\ED
\]
where $\G\phi$ is induced by \eqref{exc}.
The associated long exact sequences of $K$-groups
and $\eKQ$-groups produce two sequences of commutative
squares, $n\in\Z$,
\BE{Kn sq}
\BD
 \node{K_n(\G(R,A))}\arrow{s,r}{\G\phi}
  \node{K_n(\G(R,A),SA)}\arrow{w,t}\sim\arrow{s}\\
 \node{K_n(\G(R',A))}
  \node{K_n(\G(R',A),SA)}\arrow{w,t}\sim
\ED
\EE
and
\BE{KQn sq}
\BD
 \node{\eKQ_n(\G(R,A))}\arrow{s,r}{\G\phi}
  \node{\eKQ_n(\G(R,A),SA)}\arrow{w,t}\sim\arrow{s}\\
 \node{\eKQ_n(\G(R',A))}
  \node{\eKQ_n(\G(R',A),SA)}\arrow{w,t}\sim
\ED\,.
\EE
In view of Bass' Excision Theorem, right vertical arrows
in \eqref{Kn sq} are isomorphisms for $n\leq0$. Similarly
for \eqref{KQn sq}, in view of \cite[Section 3,
Théorème 4.1]{KaroubiVillamayor}.

Equivalence of \eqref{K RA} and \eqref{K GRA} used
in conjunction
with the Upwards Induction Theorem proves that $\G\phi$ induces
isomorphisms $\eKQ_{n+1}(\G(R,A))\simeq\eKQ_{n+1}(\G(R',A))$
for $n\leq n'$.  Invoking equivalence of \eqref{KQ GRA} and
\eqref{KQ RA} completes the proof.
\end{proof}

\appendix

\bigskip
\section{Multiplicative structures in $K$-theory}\label{Ap MSK}

\smallskip
\subsection{Product in $K$-theory}
We say that a pairing between rings 
\BE{bm-p}
\ph:A\x B \LA C
\EE
is \emph{bimultiplicative} if
\[
\ph(aa',bb')=\ph(a,b)\ph(a',b')
 \qquad(a,a'\in A;\,b,b'\in B).
\]
If rings are unital, we say that the pairing is
\emph{unital} if
\[
\ph(1_A,1_B)=1_C.
\]
A unital bimultiplicative pairing induces pairings
between algebraic $K$-groups 
\BE{K-p}
K_m(A)\x K_n(B) \LA K_{m+n}(C)
 \qquad(m,n\in\Z).
\EE
A detailed treatment can be found, for instance, in
\cite[pp.\,219-227]{Karoubi Annals}.
The pairings in \eqref{K-p} are associative,
graded-commutative and functorial.
One calls them collectively
the \emph{product structure} in algebraic
$K$-theory of unital rings,

\smallskip
Similarly, a \emph{continuous}
unital bimultiplicative pairing between
Banach algebras induces pairings
between topological $K$-groups
\begin{equation}\label{Kt-p}
K_m^\t(A)\x K_n^\t(B) \LA K_{m+n}^\t(C)
 \qquad(m,n\in\Z),
\end{equation}
and the comparison map between algebraic
and topological $K$-groups carries
the pairings in \eqref{K-p} to
the pairings in \eqref{Kt-p},
cf.\ \cite[Sections 1.25 and 2.24]{Karoubi Annals}.

\medskip
The algebraic $K$-groups of a non unital ring $A$
are usually defined as
\[
K_n(A):=\Ker\LL(K_n(\tA) \LA K_n(\Z)\RR)
\]
where $\tA:=\Z\ltimes A$ denotes the \emph{unitalization}
of ring $A$.
If $A$ is a $k$-algebra over a unital ring $k$,
then one can use instead the unitalization
$\tA_k:=k\ltimes A$ in the category of
$k$-algebras:
\[
K_n(A)_{k}:=\Ker\LL(K_n(\tA_k) \LA K_n(k)\RR).
\]
The corresponding $K$-groups, however,
depend on the choice of $k$ if $n>0$.

\smallskip
A bimultiplicative pairing \eqref{bm-p} between nonunital rings
does not extend to a pairing 
\[
\tA_\Z\x\tB_\Z \LA \tC_\Z.
\]
This is underscored by the fact that the universal pairing
\[
A\x B\LA A\T_\Z B,\qquad(a,b)\LMT a\T b\qquad(a\in A\;\,b\in B),
\]
induces the unital pairing
\[
\tA_\Z\x\tB_\Z\LA\tA_\Z\T_\Z\tB_\Z
\]
rather than
\[
\tA_\Z\x\tB_\Z\LA(A\T_\Z B)^{\kern-0.6pt\Til}_\Z
\] 
and it, accordingly, induces pairings between
the relative $K$-groups
\[
K_m(A)\x K_n(B) \LA
 K_{m+n}\bigl(\tA_\Z\T_\Z\tB_\Z,A\T_\Z B\bigr)
\]
instead of
\BE{univ K p}
K_m(A)\x K_n(B) \LA K_{m+n}\bigl(A\T_\Z B\bigr).
\EE
The canonical map
\BE{exc map}
K_{\ast}\bigl(A\T_\Z B\bigr)
 \LA K_{\ast}\bigl(\tA_\Z\T\tB_k,A\T_\Z B\bigr)
\EE
is an isomorphism in degrees less than or equal 0, and
rarely so in positive degrees.
If the ring $A\T_\Z B$ satisfies excision in $K$-theory, then
the map in \eqref{exc map} is an isomorphism in all degrees, and
any bimultiplicative pairing \eqref{bm-p} induces binary
pairings
\[
K_m(A)\x K_n(B) \LA K_{m+n}(C).
\]
Questions of associativity for such pairings would,
however, depend on whether the triple tensor
products $A\T_\Z B\T_\Z C$ satisfy excision as well.

Rings satisfying excision in rational algebraic $K$-theory
were completely characterized in a pair of articles
\cite{Wodzicki.Annals} and \cite{Suslin-Wodzicki}.
It was also proved that a $\Q$-algebra satisfies
excision in $K$-theory if and only if it
is $H$-unital. The category of $H$-unital $\Q$-algebras
is closed under $\T_\Z$,
cf.\ \cite[Theorem 7.10]{Suslin-Wodzicki}.
This in turn implies that the universal pairings of
\eqref{univ K p} are for such rings associative.

\bigskip
\section{Bounded approximate identities and $\pT$}\label{Ap BAI}
We collect here some important facts that relate
presence of a bounded approximate identity in a Banach
algebra to certain exactness properties of the
projective tensor product of Banach spaces,
introduced by Schatten \cite{Schatten}.
These results are quite well known, and apply
equally to complex and real algebras. In particular,
the terms \emph{normed algebra}, \emph{Banach algebra}
and \emph{Banach space} will be used below collectively,
to cover the complex and the real cases alike.

We shall say that an extension in the category
of Banach spaces (with bounded linear maps
	as morphisms),
\BE{DEF}
\begin{diagram}
 \node{D}\node{E}\arrow{w,t,A}{p}\node{F}\arrow{w,t,V}{i}
\end{diagram},
\EE
is \emph{pure} or, more precisely, $\pT$-{pure},
if the functor $C\pT$ preserves exactness of \eqref{DEF}
for any Banach space $C$.

\begin{lemma}
The following conditions are equivalent:
\begin{enumerate}
 \renewcommand\theenumi{\alph{enumi}}
\item the extension in \eqref{DEF} is pure;
\item the sequence
\BE{F*DEF}
\BD\dgARROWLENGTH=2.0em
\node0
 \node{F^{\ast}\pT D}\arrow{w}
  \node[2]{F^{\ast}\pT E}\arrow[2]{w,t}{\id_{F^{\ast}}\pT p}
   \node[2]{F^{\ast}\pT F}\arrow[2]{w,t}{\id_{F^{\ast}}\pT i}
    \node0\arrow{w}
\ED
\EE
  is exact;
\item the dual extension
\BE{DEF*}
\BD
 \node{D^{\ast}}\arrow{e,t,V}{p^{\ast}}\node{E^{\ast}}\arrow{e,t,A}{i^{\ast}}\node{F^{\ast}}
\ED
\EE
is split.\footnote{In the existing literature on Banach
	spaces such extensions are often called \emph{weakly split}.}
\end{enumerate}
\end{lemma}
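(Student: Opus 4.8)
The plan is to establish the three conditions equivalent by running the cycle (a) $\Rightarrow$ (b) $\Rightarrow$ (c) $\Rightarrow$ (a). The implication (a) $\Rightarrow$ (b) is immediate, being the special case $C=F^{\ast}$ of the definition of purity. The engine for the two substantial implications is the natural isometric identification $(C\pT X)^{\ast}\cong\mathcal L(C,X^{\ast})$, the space of bounded linear maps, under which a functional on $C\pT X$ is an operator $T\Cl C\to X^{\ast}$ pairing with elementary tensors by $\langle T,c\T x\rangle=\langle T(c),x\rangle$. I would also use throughout that $\pT$ is right exact, so that for every Banach space $C$ the lower part of the tensored sequence is automatically exact: $\id_C\pT p$ is a metric surjection and $\Ker(\id_C\pT p)=\overline{\operatorname{im}(\id_C\pT i)}$.

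For (c) $\Rightarrow$ (a) I would start from a bounded section $\sigma\Cl F^{\ast}\to E^{\ast}$ of $i^{\ast}$, i.e.\ $i^{\ast}\c\sigma=\id_{F^{\ast}}$, and fix an arbitrary Banach space $C$. Given $T\in\mathcal L(C,F^{\ast})=(C\pT F)^{\ast}$, the composite $\sigma\c T$ lies in $\mathcal L(C,E^{\ast})=(C\pT E)^{\ast}$, and a check on elementary tensors (using $i^{\ast}\c\sigma=\id_{F^{\ast}}$) yields $\langle T,u\rangle=\langle\sigma\c T,(\id_C\pT i)u\rangle$ for all $u\in C\pT F$. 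Taking the supremum over $\|T\|\le1$ gives $\|u\|\le\|\sigma\|\,\|(\id_C\pT i)u\|$, so $\id_C\pT i$ is bounded below, hence injective with closed range. Combining closedness of that range with right exactness, so that $\Ker(\id_C\pT p)=\overline{\operatorname{im}(\id_C\pT i)}=\operatorname{im}(\id_C\pT i)$, shows that $C\pT{-}$ preserves exactness of \eqref{DEF}; as $C$ was arbitrary, the extension is pure.

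For (b) $\Rightarrow$ (c) I would exploit that an algebraically exact short sequence of Banach spaces is automatically topologically exact. Since \eqref{F*DEF} is exact, $\Ker(\id_{F^{\ast}}\pT p)$ is closed and equals $\operatorname{im}(j)$, where $j:=\id_{F^{\ast}}\pT i$; so by the open mapping theorem $j$ is a topological embedding, whence its adjoint $j^{\ast}$ is surjective by Hahn--Banach extension. The key computation is to identify $j^{\ast}$ under the duality above: for $\Phi\in\mathcal L(F^{\ast},E^{\ast})=(F^{\ast}\pT E)^{\ast}$ one finds $j^{\ast}\Phi=i^{\ast}\c\Phi\in\mathcal L(F^{\ast},F^{\ast})=(F^{\ast}\pT F)^{\ast}$, that is, $j^{\ast}$ is post-composition with $i^{\ast}$. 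Surjectivity of $j^{\ast}$ then lets me lift $\id_{F^{\ast}}$ to some $\sigma\in\mathcal L(F^{\ast},E^{\ast})$ with $i^{\ast}\c\sigma=\id_{F^{\ast}}$, and this $\sigma$ splits the dual extension \eqref{DEF*}.

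The standard functional-analytic inputs (right exactness of $\pT$, the open mapping theorem, Hahn--Banach) are soft; the only place demanding care is the bookkeeping of the isometric duality $(C\pT X)^{\ast}\cong\mathcal L(C,X^{\ast})$ and, above all, the adjoint identification $j^{\ast}=(i^{\ast}\c{-})$ in the last step, on which the whole argument turns. Once that naturality is pinned down, both nontrivial implications reduce to the short computations indicated above.
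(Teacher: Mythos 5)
Your proof is correct and follows essentially the same route as the paper: both run the cycle (a)$\,\Rightarrow\,$(b)$\,\Rightarrow\,$(c)$\,\Rightarrow\,$(a) through the isometric duality $(C\pT X)^{\ast}\cong\L(C,X^{\ast})$, with (a)$\,\Rightarrow\,$(b) as the trivial specialization $C=F^{\ast}$ and the splitting in (b)$\,\Rightarrow\,$(c) obtained by lifting $\id_{F^{\ast}}$ along post-composition with $i^{\ast}$. The only difference is one of packaging: where the paper invokes the blanket principle that a short sequence of Banach spaces is exact if and only if its dual sequence is exact (the injective-cogenerator formulation of Hahn--Banach), you unpack that principle into the explicit open-mapping/Hahn--Banach step in (b)$\,\Rightarrow\,$(c) and the direct lower bound $\|u\|\le\|\sigma\|\,\|(\id_C\pT i)u\|$ together with right exactness of $\pT$ in (c)$\,\Rightarrow\,$(a).
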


\begin{proof}
Since a one-dimensional Banach space is an injective cogenerator
in the category of Banach spaces (the Hahn-Banach theorem),
the sequence
\BE{C DEF}
\BD\dgARROWLENGTH=2.0em
\node0
 \node{C\pT D}\arrow{w}
  \node[2]{C\pT E}\arrow[2]{w,t}{\id_C\pT p}
   \node[2]{C\pT F}\arrow[2]{w,t}{\id_C\pT i}
    \node0\arrow{w}
\ED
\EE
is exact if and only if
\BE{C DEF*}
\BD\dgARROWLENGTH=2.0em
\node0\arrow{e}
 \node{(C\pT D)^{\ast}}\arrow[2]{e,t}{(\id_C\pT p)^{\ast}}
  \node[2]{(C\pT E)^{\ast}}\arrow[2]{e,t}{(\id_C\pT i)^{\ast}}
   \node[2]{(C\pT F)^{\ast}}\arrow{e}
    \node0
\ED
\EE
is exact. Since $(X\pT Y)^{\ast}$ is isometric to
the space $\L(X,Y^{\ast})$ of bounded linear operators
from $X$ to $Y^{\ast}$, the sequence in \eqref{C DEF*}
coincides with
\BE{C->DEF*}
\BD\dgARROWLENGTH=2.0em
\node0\arrow{e}
 \node{\L(C,D^{\ast})}\arrow[2]{e,t}{p^{\ast}\c}
  \node[2]{\L(C,E^{\ast})}\arrow[2]{e,t}{i^{\ast}\c}
   \node[2]{\L(C,F^{\ast})}\arrow{e}
    \node0
\ED\;.
\EE
If \eqref{DEF*} is split, then
\eqref{C->DEF*}, and thus also \eqref{C DEF*},
are split-exact.
In particular, \eqref{C DEF} is exact.
In reverse, if \eqref{F*DEF} is exact, then
its dual
\BE{F->DEF*}
\BD\dgARROWLENGTH=2.0em
\node0\arrow{e}
 \node{\L(F^{\ast},D^{\ast})}\arrow[2]{e,t}{p^{\ast}\c}
  \node[2]{\L(F^{\ast},E^{\ast})}\arrow[2]{e,t}{i^{\ast}\c}
   \node[2]{\L(F^{\ast},F^{\ast})}\arrow{e}
    \node0
\ED
\EE
is exact which implies that \eqref{DEF*} is split.
\end{proof}

An example of a pure-exact extension is provided
by
\BE{F** F}
\begin{diagram}
 \node{F^{\astwo}/F}\node{F^{\astwo}}\arrow{w,t,A}{\pi}\node{F}\arrow{w,t,V}{\k}
\end{diagram}
\EE
where $\k$ denotes the canonical isometric embedding of an
arbitrary Banach space $F$ into its second dual. Pure-exactness 
of \eqref{F** F} follows from the fact that
\[
\k^{\ast}\Cl X^{\asthree}\LA X^{\ast}
\]
has a canonical splitting of norm 1.
The extension in \eqref{F** F} is split precisely
when $F$ is isomorphic to a complemented subspace of $E^{\ast}$
for some Banach space $E$.

\medskip
Recall that a net $(e_i)_{i\in I}$ in a normed algebra $A$ is
a \emph{left approximate identity} if
\BE{LAI}
\lim_{i\in I}\|e_ia-a\|=0\qquad\text{for any $a\in A$.}
\EE
A right approximate identity is defined similarly.
We say that the approximate identity is \emph{bounded}
if $\sup_{i\in I}\|e_i\|<\infty$.

Among numerous examples of normed algebras with
bounded approximate identity one should mention
that every right (respectively, left) ideal
in a $C^{\ast}$-algebra possesses a bounded left
(respectively, right) approximate identity
\cite[1.7.3]{Dixmier}.

\begin{prop}\label{pr:BAI}
For any Banach algebra $A$, the following conditions
are equivalent:
\begin{enumerate}
 \renewcommand\theenumi{\alph{enumi}}
\item for any unital Banach algebra $B$
 which contains $A$ as a closed right ideal,
 the extension 
\BE{B A*}
\begin{diagram}
 \node{(B/A)^{\ast}}\arrow{e,,V}\node{B^{\ast}}\arrow{e,,A}\node{A^{\ast}}
\end{diagram}
\EE
admits a bounded $B$-linear splitting;
\item there exists a unital Banach algebra $B_0$
 which contains $A$ as a closed right ideal,
 such that the extension 
\BE{B0 A**}
\begin{diagram}
 \node{(B_0/A)^{\astwo}}\node{B_0^{\astwo}}\arrow{w,,A}\node{A^{\astwo}}\arrow{w,,V}
\end{diagram}
\EE
admits a bounded $B_0$-linear splitting;
\item $A$ possesses a bounded left approximate identity.
\end{enumerate}
\end{prop}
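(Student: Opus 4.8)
The plan is to establish the cycle of implications (c) $\Rightarrow$ (a) $\Rightarrow$ (b) $\Rightarrow$ (c). Throughout, the crucial bookkeeping is the module structure, and I would set it up first: since $A$ is only a \emph{right} ideal in $B$, right multiplication makes $A$ a right $B$-module, so $A^{\ast}$ carries a \emph{left} $B$-action given by $(b\cdot\psi)(a)=\psi(ab)$, the restriction map $r\Cl B^{\ast}\to A^{\ast}$ is left $B$-linear with $\Ker r=(B/A)^{\ast}$ the annihilator of $A$, and a ``bounded $B$-linear splitting'' in (a) means a left $B$-module section of $r$. On the bidual level in (b) the relevant actions are the corresponding right $B$-actions obtained by transposing once more; keeping these one-sided structures straight is the recurring point of care.

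For (c) $\Rightarrow$ (a), given a left approximate identity $(e_i)$ bounded by $m$, I would fix an ultrafilter $\mathcal U$ on the index set containing all tails and define, for $\psi\in A^{\ast}$,
\[
s(\psi)(b):=\lim_{\mathcal U}\psi(e_ib)\qquad(b\in B),
\]
which makes sense because $e_ib\in A$ ($A$ being a right ideal) and satisfies $\|s(\psi)\|\le m\|\psi\|$. Restricting to $A$ recovers $\psi$ since $e_ia\to a$ in norm, and left $B$-linearity is immediate from $s(b\cdot\psi)(c)=\lim_{\mathcal U}\psi(e_icb)=(b\cdot s(\psi))(c)$. Thus (c) $\Rightarrow$ (a) reduces to a one-line ultrafilter limit, and it is pleasant that $B$-linearity and boundedness both drop out automatically.

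The implication (a) $\Rightarrow$ (b) is pure duality. I would choose any unital $B_0$ containing $A$ as a closed right ideal (e.g.\ adjoin a unit, so that $A$ is even a two-sided ideal), apply (a) to obtain a left $B_0$-linear section $s\Cl A^{\ast}\to B_0^{\ast}$ of $r$, and take adjoints. Since $r^{\ast}\Cl A^{\astwo}\to B_0^{\astwo}$ is exactly the canonical inclusion dual to $0\to A\to B_0\to B_0/A\to0$, the relation $rs=\id$ dualizes to $s^{\ast}r^{\ast}=\id$, exhibiting $s^{\ast}$ as a bounded retraction of $A^{\astwo}\hookrightarrow B_0^{\astwo}$; transposing a left-module map yields a right-module map, which is precisely the $B_0$-linearity required in (b).

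The substance lies in (b) $\Rightarrow$ (c). From a bounded $B_0$-linear retraction $P\Cl B_0^{\astwo}\to A^{\astwo}$ I would set $E:=P(\hat1)$ and use $B_0$-linearity together with $\hat1\cdot a=\hat a$ to deduce $E\cdot a=\hat a$ in $A^{\astwo}$ for every $a\in A$, i.e.\ $\langle E,R_a^{\ast}\psi\rangle=\psi(a)$ for all $\psi\in A^{\ast}$. Goldstine's theorem then supplies a norm-bounded net $(e_i)$ in $A$ with $\hat e_i\to E$ weak-$\ast$, whence $e_ia\to a$ \emph{weakly} for all $a$. The main obstacle, and the only genuinely nontrivial analytic step, is upgrading this bounded \emph{weak} left approximate identity to a \emph{norm} one: I would fix finitely many $a_1,\dots,a_n$ and $\varepsilon>0$, observe that $(a_1,\dots,a_n)$ lies in the weak closure in $A^n$ of the orbit $\{(e_ia_1,\dots,e_ia_n)\}$, and invoke Mazur's theorem (the weak and norm closures of a convex set coincide) to replace the $e_i$ by a convex combination $e=\sum\lambda_je_{i_j}$ with $\|ea_k-a_k\|<\varepsilon$ for all $k$ and $\|e\|\le m$. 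Directing such $e$ by finite subsets of $A$ and by $\varepsilon$ yields the sought norm-bounded left approximate identity, closing the cycle.
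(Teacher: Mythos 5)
Your proof is correct and follows essentially the same route as the paper's: the paper proves (c) $\Rightarrow$ (a) by taking a weak$^{\ast}$ limit of the adjoints of the maps $l_i\Cl b\mapsto e_ib$, proves (b) $\Rightarrow$ (c) via $\ep=(s\c\k)(1_{B_0})$ and Goldstine's theorem, and treats (a) $\Rightarrow$ (b) as the evident dualization that you spell out. Two local differences deserve comment. First, in (c) $\Rightarrow$ (a) you replace the paper's compactness argument --- Banach--Alaoglu applied to the bounded net $(l_i^{\ast})$ in $\L(A^{\ast},B^{\ast})\simeq(A^{\ast}\pT B)^{\ast}$ --- by a pointwise ultrafilter limit $s(\psi)(b)=\lim_{\mathcal U}\psi(e_ib)$; these devices are interchangeable, and your verification of left $B$-linearity and of the bound $\|s(\psi)\|\leq m\|\psi\|$ is exactly what the paper obtains from the limit of $B$-linear maps. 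Second, and more substantively, in (b) $\Rightarrow$ (c) the paper simply asserts that any net in $A$ converging weak$^{\ast}$ to $\ep$ is a left approximate identity; as you correctly observe, weak$^{\ast}$ convergence $\hat e_i\to\ep$ yields only $e_ia\to a$ \emph{weakly}, whereas the paper's definition of approximate identity requires norm convergence. Your explicit Mazur step --- passing to convex combinations over finite subsets $a_1,\dots,a_n$, which preserves the bound since convex combinations of elements of norm at most $m$ again have norm at most $m$, and then directing by (finite subset, $\varepsilon$) --- is the standard argument that closes this gap, so on this point your write-up is more complete than the paper's own proof. The module bookkeeping ($A^{\ast}$ a left $B$-module, the biduals right $B$-modules, transposition reversing sides) agrees with what the paper uses implicitly when it writes $(s\c\k)(b)=\ep b$.
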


\begin{proof}
Let us consider the commutative diagram of extensions
of Banach right $B_0$-modules
\BE{BA**}
\begin{diagram}
 \node{B_0/A}\arrow{s,r,V}{\k}
  \node{B_0}\arrow{s,r,V}{\k}\arrow{w,t,A}{p}
   \node{A}\arrow{s,r,V}{\k}\arrow{w,t,V}{i}\\
 \node{(B_0/A)^{\astwo}}\node{B_0^{\astwo}}\arrow{w,t,A}{p^{\astwo}}
  \node{A^{\astwo}}\arrow{w,t,V}{i^{\astwo}}
\end{diagram}
\EE
where the vertical arrows correspond to the canonical
$B_0$-linear embeddings into the second dual.
Suppose $s\Cl B_0^{\astwo}\to A^{\astwo}$ is a bounded
$B_0$-linear map such that $s\c i^{\astwo}=\id_{A^{\astwo}}$.
Let $\ep=(s\c\k)(1_{B_0})\in A^{\astwo}$.
Since $s\c\k\Cl B_0\to A^{\astwo}$ is $B_0$-linear,
it is necessarily of the form
\[
b\mapsto(s\c\k)(b)=\ep b\qquad(b\in B_0)
\]
and
\[
\ep a=(s\c\k\c i)(a)=(s\c i^{\astwo}\c\k)(a)=\k(a)
 \qquad(a\in A).
\]
Every Banach space $X$ is dense in its second
dual in the weak$^{\ast}$ topology.
Indeed, for any $\th\in X^{\astwo}$,
and any linearly independent $n$-tuple
$\xi_1,\dots,\xi_n\in X^{\ast}$,
there exists $x\in X$ such that
\[
\th(\xi_1)=\xi_1(x),\qquad\dots\qquad,\ \th(\xi_n)=\xi_n(x).
\]
This follows from the fact that the linear mapping
\[
\BM{c}\xi_1\\\vdots\\\xi_n\EM\Cl X\LA F^n
\]
is surjective ($F$ denotes the ground field, either $\R$ or $\C$).
Any net $(e_i)_{i\in I}$ in $A$ which converges to $\ep$
in the weak$^{\ast}$ topology is a left approximate identity
in $A$. Since the unit ball of $A$ is weak$^{\ast}$-dense
in the unit ball of $A^{\astwo}$ (Goldstine's theorem,
cf.,\ e.g.\ \cite[V.4.5]{DS}) one can find $(e_i)_{i\in I}$
with $\sup_{i\in I}\|e_i\|=\|\ep\|$. This completes the
proof that (b) implies (c).

\medskip
Given any bounded left approximate identity on $A$
and a unital Banach algebra $B$ which contains $A$
as a closed right ideal, let $(l_i)_{i\in I}$ be the
corresponding net of bounded $B$-linear maps
\[
l_i\Cl B\LA A,\qquad b\LMT l_i(b)=e_ib\qquad(b\in B).
\]
The net of adjoint maps $l_i^{\ast}\Cl A^{\ast}\to B^{\ast}$ is
bounded, hence, by the Banach-Alaoğlu theorem,
there exists a subnet $(l_i^{\ast})_{i\in I'}$ which
converges in the weak$^{\ast}$ topology of
$\L(A^{\ast},B^{\ast})\simeq(A^{\ast}\pT B)^{\ast}$ to a map
$\l\in\L(A^{\ast},B^{\ast})$. That map, being the limit
of $B$-linear maps is $B$-linear, and
\[
(\l(\a))(a)=\lim_{i\in I'}\,(l_i(\a))(a)
 =\lim_{i\in I'}\,\a(e_ia)=\a(a)
 \qquad(\a\in A^{\ast},\,a\in A),
\]
shows that $i^{\ast}\c l=\id_{A^{\ast}}$.
\end{proof}

\begin{cor}\label{cor:BAI}
Let $A$ be a Banach algebra with bounded left
approximate identity. Then, for any Banach
algebra $B$ which contains $A$ as a closed
right ideal,
\BE{B A}
\begin{diagram}
 \node{B/A}\node{B}\arrow{w,,A}\node{A}\arrow{w,,V}
\end{diagram}
\EE
is a pure-exact extension of Banach spaces.
\end{cor}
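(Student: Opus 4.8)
The plan is to reduce purity of \eqref{B A} to a splitting property of the dual extension and then to extract that splitting from Proposition~\ref{pr:BAI}. Write \eqref{B A} as the short exact sequence of Banach spaces $0\to A\xrightarrow{\,i\,}B\xrightarrow{\,p\,}B/A\to0$. By the Lemma at the start of this appendix (the equivalence of its conditions (a) and (c)), this extension is pure precisely when its dual extension
\[
\BD
\node{(B/A)^{\ast}}\arrow{e,t,V}{p^{\ast}}\node{B^{\ast}}\arrow{e,t,A}{i^{\ast}}\node{A^{\ast}}
\ED
\]
is split, i.e.\ precisely when the restriction map $i^{\ast}\Cl B^{\ast}\to A^{\ast}$ admits a bounded linear right inverse. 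Thus the whole task reduces to producing such a right inverse out of a bounded left approximate identity on $A$.

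First I would dispose of the case in which $B$ is \emph{unital}: there the desired right inverse is exactly what the implication (c)$\Rightarrow$(a) of Proposition~\ref{pr:BAI} delivers (in fact a $B$-linear one). For an arbitrary Banach algebra $B$ I would pass to the unitalization $\tB=F\ltimes B$. One must first check that $A$ is still a closed right ideal of $\tB$: the summand $B$ is closed in $\tB$ and $A$ is closed in $B$, while for $a\in A$ and $(\l,b)\in\tB$ one has $a\cdot(\l,b)=\l a+ab\in A$ because $A$ is a right ideal of $B$. Proposition~\ref{pr:BAI}, now applicable to the unital algebra $\tB$, then furnishes a bounded linear map $s\Cl A^{\ast}\to\tB^{\ast}$ with $i_{\tB}^{\ast}\c s=\id_{A^{\ast}}$, where $i_{\tB}\Cl A\to\tB$ denotes the inclusion.

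Finally I would descend from $\tB$ back to $B$. Writing $j\Cl B\to\tB$ for the canonical inclusion, one has $i_{\tB}=j\c i$, hence $i_{\tB}^{\ast}=i^{\ast}\c j^{\ast}$, so that $j^{\ast}\c s\Cl A^{\ast}\to B^{\ast}$ obeys $i^{\ast}\c(j^{\ast}\c s)=i_{\tB}^{\ast}\c s=\id_{A^{\ast}}$ and is the bounded linear right inverse sought; the Lemma then yields purity of \eqref{B A}. The only genuinely delicate point --- and so the main, if modest, obstacle --- is this reduction to the unital situation, namely confirming that $A$ persists as a closed right ideal in $\tB$ and that composition with $j^{\ast}$ preserves the splitting identity. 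I note that one could instead avoid unitalization entirely and argue as in the proof of Proposition~\ref{pr:BAI}: for \emph{any} $B$ containing $A$ as a closed right ideal, the adjoints $l_i^{\ast}\Cl A^{\ast}\to B^{\ast}$ of the maps $b\mapsto e_ib$ built from a bounded left approximate identity $(e_i)$ form a bounded net which, by Banach--Alaoğlu, possesses a weak$^{\ast}$ convergent subnet whose limit is $B$-linear and is a bounded linear right inverse of $i^{\ast}$.
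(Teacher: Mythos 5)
Your proof is correct and takes essentially the same route as the paper's: reduce to the unitalization $\tB=F\ltimes B$ and invoke Proposition \ref{pr:BAI} for the unital case, with the Lemma's dual-splitting criterion mediating between purity of \eqref{B A} and the existence of a bounded right inverse of $i^{\ast}$. You merely make explicit the descent of the splitting from $\tB^{\ast}$ to $B^{\ast}$, which the paper leaves implicit, and your closing observation --- that the Banach--Alao\u{g}lu argument in the proof of Proposition \ref{pr:BAI} never uses unitality of $B$ and so applies directly --- is also valid.
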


The extension in \eqref{B A} is pure-exact if it is pure
exact when $B$ is replaced by its unitalization
and for unital $B$ the assertion in \ref{cor:BAI}
is an immediate corollary of Proposition
\ref{pr:BAI}.

\medskip
Note that Proposition \ref{pr:BAI} is for Banach algebras
what Proposition 2 in \cite{Wodzicki.PNAS} is for rings.

\bigskip


\begin{thebibliography}{99}                                                                                               %

\bibitem {Araki-Toda}S. Araki and H. Toda. \emph{Multiplicative structures in
mod $q$ cohomology theories} I, II. Osaka J. Math.\ \textbf2, 71--115 (1965)
and \textbf{3}, 81--120 (1966).

\bibitem {AtiyahBook}M. F. Atiyah. $K$-theory. W. A. Benjamin, 1967.

\bibitem {Bak}A. Bak. $K$-theory of forms. \emph{Annals of Mathematics Studies} \textbf{98}, Princeton University Press, 1981.

\bibitem {Bassbook}H. Bass. \emph{Algebraic $K$-theory}. Benjamin, 1968.

\bibitem {BattikhCR}N. Battikh. \emph{Excision en K-théorie hermitienne}.  C. R. Acad.\ Sci.\ Paris Sér.\ I Math.\ \textbf{325}, no 2, 131--134 (1997).

\bibitem {Battikh}N. Battikh. \emph{Some applications of the fundamental
theorem of Hermitian }$K$\emph{-theory}.
Missouri J. of Math.\ Sci.\ 23, 48--64 (2011).

\bibitem {Berrick-Karoubi}A. J. Berrick and M. Karoubi.
\emph{Hermitian $K$-theory of the integers}.
Amer.\ J. of Math.\ \textbf{127}, 785--823 (2005).

Ann. of Math. 78, 313-337 (1959).

\bibitem {Browder}W. Browder. \emph{Algebraic $K$-theory with coefficients
$\Z/p\Z$}. Springer Lecture Notes \textbf{657}, 40--84 (1977).

\bibitem {CMR} J. Cuntz, R. Meyer, J, Rosenberg. Topological and bivariant $K$-theory.
{Oberwolfach seminars} \textbf{36}. Birkhäuser 2007.

\bibitem {Dixmier} J. Dixmier. Les $C^{\ast}$-algèbres et leurs représentation. Gauthier-Villars, 1969.

\bibitem {DS} N. Dunford, J. T. Schwartz. Linear operators. Interscience Publishers, 1958.

\bibitem {Fack}T. Fack. \emph{Exactness of the maximal tensor product of real $C^{\ast}$-algebras}, \emph{an appendix to the present article; this volume}.

\bibitem {FarrellWagoner}F. T. Farrell and J. B. Wagoner. \emph{Infinite Matrices in Algebraic $K$-Theory and Topology}. Commentarii mathematici helvetici \textbf{47}, 474--501 (1972).

\bibitem {Higson}N. Higson. \emph{Algebraic K-theory of stable $C^{\ast}$-algebras}.
Adv.\ in Math. \textbf{67}, 1--140 (1988).
 

\bibitem {Karoubiflabby}M. Karoubi. \emph{Foncteurs dérivés et $K$-théorie}.
 Séminaire Heidelberg-Saarbrücken-Strasbourg sur la $K$-théorie, Année 1967/68.
Springer Lecture Notes \textbf{136}, 107--186 (1970).

\bibitem {KaroubiVillamayor}M. Karoubi and O. Villamayor. \emph{$K$-théorie algébrique et $K$-théorie topologique II}.  Math.\ Scand.\ \textbf{32}, 57--86 (1973).

\bibitem {KaroubiSLN343}M. Karoubi. \emph{Périodicité de la $K$-théorie hermitienne}.
Springer Lecture Notes \textbf{343}, 301--411 (1973).

\bibitem {MKbook}M. Karoubi. \emph{$K$-theory. An introduction}. Grundlehren
der Mathematik\ \textbf{226}, Springer-Verlag, 1978 (repr.\ 2008 with errata).

\bibitem {KaroubiSLN}M. Karoubi. \emph{K-théorie algébrique de
certaines algèbres d'opérateurs}. Springer Lecture Notes
\textbf{725}, 254--290 (1978).

\bibitem {Karoubi Annals}M. Karoubi. \emph{Théorie de Quillen et homologie
du groupe orthogonal}. Annals of Math.\ \textbf{112}, 207--257 (1980).

\bibitem {KaroubiAnnals}M. Karoubi. \emph{Le théorème fondamental de
la $K$-théorie hermitienne}. Annals of Math.\ 112, 259-282 (1980).

\bibitem {KaroubiJ.op.theory}M. Karoubi. \emph{Homologie de groupes discrets
associés à des algèbres d'opérateurs}. Journal of Operator
Theory \textbf{15}, 109--161 (1986).


\bibitem {Milnor}J. Milnor. \emph{Introduction to algebraic $K$-theory}.
Ann.\ of Math.\ Studies \textbf{72}, Princeton University Press, 1971.

\bibitem {Neisendorfer}J. A. Neisendorfer. \emph{Homotopy groups with coefficients} (86 pages),\\ \textsf{http://www.math.rochester.edu/people/faculty/jnei/homgrpswithcoef{}f.pdf}

\bibitem {Oka}S. Oka. \emph{Multiplications on the Moore spectrum}. Mem.\ Fac.\ Sci.\ Kyushu Univ.\ Ser.\ A \textbf{38}, 257--276 (1984).

\bibitem {Palais}R. Palais. \emph{On the homotopy type of certain groups of operators}. Topology \textbf3, 271--279 (1965).

\bibitem {Schatten}R. Schatten. \emph{On the direct product of Banach spaces}.
Trans.\ Amer.\ Math.\ Soc.\ \textbf{53}, 195--217 (1943).

\bibitem {Schlichting}M. Schlichting. \emph{Hermitian $K$-theory, derived
equivalences and Karoubi's fundamental theorem}.
ArXiv Math.: 1209.0848 (2012).

\bibitem{Suslin}A. Suslin. \emph{On the K-theory of local fields}. Journal of Pure and Applied Algebra \textbf{34}, 301--318 (1984).

\bibitem {Suslin-Wodzicki.PNAS}A. Suslin and M. Wodzicki. \emph{Excision in
algebraic K-theory and Karoubi's conjecture}. Proc.\ NAS USA \textbf{87},
9582--9584 (1990).

\bibitem {Suslin-Wodzicki}A. Suslin and M. Wodzicki. \emph{Excision in
algebraic $K$-theory}. Ann.\ of Math.\ \textbf{136}, 51--122 (1992).

\bibitem {Wagoner}J.\ B. Wagoner. \emph{Delooping classifying spaces in
algebraic $K$-theory}. Topology 11, 349--370 (1972).

\bibitem {Wasserstein} 	\foreignlanguage{russian}{Л. Н. Васерштейн. \emph{Стабилизация унитарных и ортогональных групп над кольцом с ин\-во\-люцией}. Матем. сб., 81(123):3 (1970), 328--351} \ [L. N. Vaserštein. \emph{Stabilization of Unitary and Orthogonal Groups over a Ring with Involution}. Math. USSR Sbornik Math. \textbf{10}, 307-326 (1970).]

\bibitem {Wodzicki.CR}M. Wodzicki. \emph{The long exact sequence in cyclic
homology associated with an extension of algebras}.
C. R. Acad.\ Sci.\ Paris Sér.\ I Math.\ \textbf{306}, 399--403 (1988).

\bibitem {Wodzicki.Annals}M. Wodzicki. \emph{Excision in cyclic homology and
in rational algebraic $K$-theory}. Ann.\ of Math.\ \textbf{129}, 591--639 (1989).

\bibitem {Wodzicki.PNAS}M.~ Wodzicki. \emph{Homological properties of rings of
functional-analytic type}. Proc.\ NAS USA \textbf{87}, 4910--491 (1990).

\bibitem {Wodzicki.ECM}M.~Wodzicki. \emph{Algebraic $K$-theory and
functional analysis}. First European Congress, Paris, 1992.
Vol.\ II, 485--496, \emph{Progress in Mathematics} \textbf{120}, Birkhäuser (1994).

\end{thebibliography}
\end{document}